\def\di{\text{\rm di-}}
\def\tri{\text{\rm tri-}}
\def\pre{\text{\rm pre-}}
\def\post{\text{\rm post-}}
\def\As{\mathrm{As}}
\def\Alt{\mathrm{Alt}}
\def\Mal{\mathrm{Mal}}
\def\Jord{\mathrm{Jord}}
\def\Lie{\mathrm{Lie}}
\def\Leib{\mathrm{Leib}}
\def\Perm{\mathrm{Perm}}
\def\ComTrias{\mathrm{ComTrias}}
\def\Id{\mathrm{Id}}
\def\id{\mathrm{id}}
\theoremstyle{plain}
\newtheorem{theorem}{Theorem}[section]
\newtheorem{lemma}[theorem]{Lemma}
\newtheorem{proposition}[theorem]{Proposition}
\newtheorem{corollary}[theorem]{Corollary}
\theoremstyle{definition}
\newtheorem{definition}[theorem]{Definition}
\newtheorem{example}[theorem]{Example}
\theoremstyle{remark}
\newtheorem{remark}{Remark}
\let\medcirc\circ
\let\medbullet\bullet
\let\EuScript\mathcal
\begin{document}

%\afterpage{\rhead[]{\thepage} \chead[\small V. Yu. Gubarev, 
%P. S. Kolesnikov]{\small Operads of decorated trees and their duals} \lhead[\thepage]{} }            
 
\begin{center}
\vspace*{2pt}
{\Large \textbf{Operads of decorated trees and their duals}}\\[30pt]
{\large {\emph{Vsevolod Yu.~Gubarev, Pavel S.~Kolesnikov}}}
\\[30pt]
\end{center}
{\footnotesize\textbf{Abstract}  
This is an extended version of a talk presented by the second author on the
Third Mile High Conference on Nonassociative Mathematics (August 2013, Denver,
CO). The purpose of this paper is twofold. First, we would like to review the
technique developed in a series of papers for various classes of di-algebras and show how do the same
ideas work for tri-algebras. Second, we present a general approach to the
definition of pre- and post-algebras which turns out to be equivalent to the
construction of dendriform splitting. However, our approach is
more algebraic and thus provides simpler way to prove various properties of
pre- and post-algebras in general.
}
\footnote{\textsf{2010 Mathematics Subject Classification:} , 17A30, 17A36, 17A42, 18D50
} \footnote{\textsf{Keywords:} Leibniz algebra, dialgebra, dendriform algebra, pre-Lie algebra.}

%{Sobolev Institute of Mathematics, Novosibirsk, Russia}

\section{Introduction}

The study of a wide variety of algebraic systems that may be informally called di-algebras was initiated 
by J.-L.~Loday and T.~Pirashvili 
\cite{LodayPir1993}, who proposed the notion of an (associative) di-algebra as a
tool in  cohomology theory of Lie and Leibniz algebras.
A systematic study of associative di-algebras and their Koszul dual dendriform algebras 
was presented in \cite{Loday01}.
Later, an algebraic approach to operads appearing in combinatorics led J.-L.~Loday and M.~Ronco
\cite{TriAndPostAs} to the notions of tri-associative and tri-dendriform algebras. 

In \cite{Chapoton01}, F.~Chapoton pointed out that the operads governing the varieties
of Leibniz algebras and of di-algebras in the sense of \cite{LodayPir1993} may be presented 
as Manin white products \cite{GinKapr94} of the operad $\Perm $ with $\Lie $ and $\As$, respectively. 
Manin products (white product~$\medcirc $ and black product~$\medbullet $) were originally defined for quadratic associative algebras 
and binary quadratic operads.  In \cite{Vallette08_posets}, it was proposed a conceptual approach 
to Manin products and Koszul duality which covers a wide range of monoids in categories with two coherent 
monoidal products. The operad $\Perm $ has an extremely simple algebraic nature, so it is obvious 
that the white product $\Perm \medcirc \mathfrak M$ coincides with the Hadamard product 
$\Perm \otimes \mathfrak M$ for every binary quadratic operad $\mathfrak M$ 
(see \cite{Vallette08_posets}). 
In this way, a general definition 
of a di-algebra over $\mathfrak M$ as an algebra governed by $\Perm\otimes \mathfrak M$
was considered in \cite{Kol2008a}, where it was shown that di-algebras are 
closely related with pseudo-algebras in the sense of \cite{BDK2001}.
This relation allowed solving many algebraic problems on di-algebras \cite{DiTKK,KolVoronin2013,Voronin2013},
and it is interesting to find an analogous construction for tri-algebras as well. 
It was also shown in \cite{Vallette08_posets} that the operad $\ComTrias $ (introduced in \cite{Vallette_2007})
has the same property as $\Perm$: $\ComTrias \medcirc \mathfrak M = \ComTrias \otimes \mathfrak M$.
In this paper, we show how to recover an ``ordinary'' algebra from a given $(\ComTrias \otimes \mathfrak M)$-algebra
and apply the result to solve a series of problems on tri-algebras. 

Roughly speaking, a passage from an operad $\mathfrak M$ governing a variety of ``ordinary'' 
algebras (associative, Lie, Jordan, Poisson, etc.) to the operad $\di\mathfrak M$ or $\tri\mathfrak M$
may be performed by ``decoration'' of planar trees presenting the operad $\mathfrak M$.
(For di-algebras, the procedure was proposed in \cite{Kol2008a}, for tri-algebras---in \cite{GubKol2013}
in the case of binary operations.) In this sense, to decorate a tree one has to emphasize one (for di-algebras)
or several (for tri-algebras) leaves and assume the composition (grafting) of trees to preserve 
the decoration (see Section \ref{sec:Replication} for details).

A similar unified approach to the definition of dendriform 
algebras comes naturally from the general concept of Manin black product \cite{Vallette08_posets}.
Namely, the class of associative dendriform di-algebras $\mathrm{Dend}$ \cite{Loday01}
is known to be governed by $\pre\Lie\medbullet \As$, where $\pre\Lie $ is the operad of 
left-symmetric algebras. Obviously (see \cite{GinKapr94}), $\mathrm{Dend} = (\Perm\medcirc \As)^!$
since $\Perm^! =\pre\Lie$, $\As^!=\As$.
The same duality between white and black Manin product holds in the general settings
\cite{Vallette08_posets}. So, the natural way to define a dendriform version of an 
$\mathfrak M$-algebra is to consider the operad $\pre\Lie\medbullet \mathfrak M$ 
or $\post\Lie \medbullet \mathfrak M$ (the operad $\post\Lie$ was introduced in \cite{Vallette_2007}).
The explicit description of the corresponding varieties of such systems in terms of defining identities 
was proposed in \cite{BaiGuoNi2012} (as  di-successor and tri-successor algebras) and in \cite{GubKol2013} 
as (di- and tri-dendriform algebras). 
A generalization of the first construction has recently been 
published in \cite{PeiBaiGuo2013}:
$\EuScript B$-($\EuScript A$-)Sp($\mathfrak M$)-algebras are defined for an arbitrary 
operad $\mathfrak M$.
In this paper, we state another simple procedure of ``dendriform splitting'' and prove that the classes of 
systems obtained 
(called pre- or post-algebras, respectively) coincide with those already introduced 
in \cite{BaiGuoNi2012,GubKol2013,PeiBaiGuo2013}.

The purpose of this paper is twofold. First, we would like to review the technique 
developed in \cite{Kol2008a}, \cite{GubKol2013}, and \cite{KolVoronin2013} for various classes of
 di-algebras
and show how do the same ideas work for tri-algebras. Second, we present a general 
approach to the definition of pre- and post-algebras which turns out to be equivalent 
to the construction of splitting proposed in \cite{PeiBaiGuo2013}. 
However, our approach is more algebraic and thus provides simpler way to prove 
various properties of pre- and post-algebras in general.

The paper is organized as follows. In Section \ref{sec:Replication} we recall 
the general definition of what is a di- or tri-algebra and explain 
its relation with averaging operators. 
Section \ref{sec:Splitting} is devoted to 
a construction generalizing Manin black products $\pre\Lie\medbullet \mathfrak M$
and $\post\Lie\medbullet \mathfrak M$ to an arbitrary (not necessarily binary or quadratic) 
operad $\mathfrak M$. The classes of $\pre\mathfrak{M}$- and $\post\mathfrak{M}$-algebras
obtained are closely related with Rota---Baxter operators in the very same way as 
($\EuScript A$-)Sp($\mathfrak M$)- and $\EuScript B$Sp($\mathfrak M$)-algebras
in \cite{PeiBaiGuo2013}, thus, our approach leads to the same classes of systems.
In Section \ref{sec4} we observe a series of algebraic problems related with di- and tri-algebras. 
Most of natural problems in this area may be easily reduced to similar problems in ``ordinary'' 
algebras by means of the embedding proved in Theorem~\ref{thm:EmbeddingIntoCurrent}.
 Section \ref{sec5} is devoted to analogous problems on pre- and post-algebras. 
In these classes, the picture is obscure: It is possible to state that many classical 
algebraic problems (like those stated is Section \ref{sec4}) make sense for pre- and post-algebras, 
but it is not clear how to solve them.

Throughout the paper we will use the following notations:
$\mathcal P(n)$ is the set of all nonempty subsets of $\{1,\dots, n\}$;
$S_n$ is the group of all permutations of $\{1,\dots, n\}$.
An operad $\mathfrak M$ is a collection of $S_n$-modules
$\mathfrak M(n)$, $n\ge 1$, equipped with associative and equivariant composition rule, see, e.g.,
\cite{LodayVallette}.

Given a language $\Sigma $ (a set of symbols of algebraic operations $f$ together with their arities $\nu(f)$), 
by a $\Sigma $-algebra we mean a linear space equipped with 
algebraic operations from $\Sigma $. The class of all $\Sigma$-algebras as well as the corresponding (free)
operad we denote by $\mathfrak F_\Sigma $. If $\mathfrak M$ is a quotient operad of $\mathfrak F_\Sigma$
and a $\Sigma$-algebra $A$ belongs to the variety governed by $\mathfrak M$ then 
we say $A$ to be an $\mathfrak M$-algebra. 
We will use the same symbol $\mathfrak M$ to denote the entire variety governed by operad $\mathfrak M$.

The free algebra in the variety of all $\mathfrak M$-algebras 
generated by a set $X$ we denote by $\mathfrak M\langle X\rangle $.

\subsection*{Acknowledgements}
The authors are very grateful to Bruno Vallette and to the anonymous referee
for valuable remarks and comments. 
The second author is pleased to acknowledge hospitality of the University of Denver 
during the Third Mile High Conference on Nonassociative Mathematics. 
This work was partially supported by RFBR 12-01-00329 and 12-01-33031.

\section{Replicated algebras}\label{sec:Replication}

 \subsection{Replication of a free operad}

 In this section we present an explanation of the idea underlying
 the transition from ``ordinary'' algebras to di- and tri-algebras
 and discuss why these constructions are the only possible ones
 in a certain context.

 Let us consider the free operad $\mathfrak F=\mathfrak F_\Sigma $
 generated by operations $\Sigma $.
 According to the natural graphical interpretation, the spaces $\mathfrak F(n)$, $n\ge 1$,
 are spanned by planar trees with enumerated leaves (variables) and labeled vertices
 (operations). 
For example, if $\Sigma =\{ (\cdot*\cdot), [\cdot,\cdot] \}$ consists of two
 binary operations
 then the term $[x_1,(x_4*x_3)]*[x_2,x_5]$ may be identified with
\begin{center}
\includegraphics{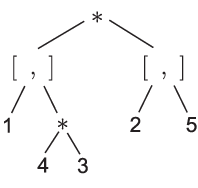}
\end{center}

 The general idea of replication (c.f. \cite{Kol2008a}) is to set an additional
 feature on the trees from $\mathfrak F(n)$: Emphasize one or several leaves
 and claim that the emphasizing is preserved by composition (grafting).
 Let us explain the details graphically and then present an equivalent algebraic
 statement.

 Recall the composition rule on the operad $\mathfrak F$: Given
 $T\in \mathfrak F(n)$, $T_i\in \mathfrak F(m_i)$, $i=1,\dots, n$,
 their composition $T(T_1,\dots, T_n) \in \mathfrak F(m_1+\dots + m_n)$
 is a tree obtained by attaching each $T_i$ to the $i$th leaf of $T$
 and by natural shift of numeration of leaves in each $T_i$. For example, if
 $\Sigma =\{ (\cdot*\cdot), [\cdot,\cdot] \}$ consists of two  binary operations,
$T=[x_2,x_3]*x_1$, $T_1=[x_2,[x_1,x_3]]$, $T_2=[x_2,x_1]$, $T_3=x_1*x_2$,
then $T(T_1,T_2,T_3)$ is presented by
\begin{center}
\includegraphics{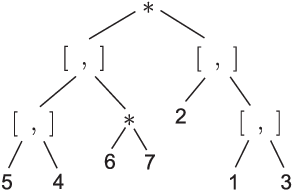}
\end{center}
 Symmetric group $S_n$ acts on $\mathfrak F(n)$ by permutations of leaves' numbers.

 By definition, every tree in $\mathfrak F(n)$ may be constructed by composition
 and symmetric group actions from the elementary trees (generators of the operad)
$f(x_1,\dots, x_n)$, $f\in \Sigma $, $\nu(f)=n$.

 Now, replace the generators by ``decorated'' elementary trees with one or several emphasized leaves
 and define the composition of such trees by the same rule as in $\mathfrak F$,
 assuming that: (1) attaching of a tree $T_i$ to a non-emphasized leaf of $T$ removes decoration from
 $T_i$; (2) attaching of a tree $T_i$ to an emphasized leaf of $T$ preserves decoration on $T_i$.
An example of such a composition with emphasized leaves circled in black is stated below.
\begin{center}
\includegraphics{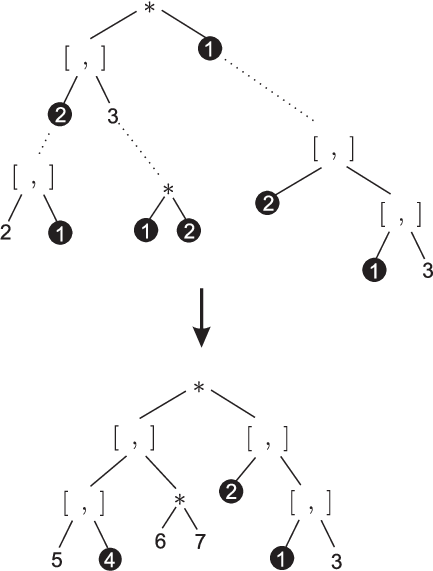}
\end{center}

 Note that if each of the trees $T,T_1,\dots, T_n$ has only one emphasized leaf then
 so is their composition $T(T_1,\dots, T_n)$. However, if we are allowed to emphasize
 more than one leaf (say, no more than two leaves of each tree, as in example above)
 then the composition may contain more emphasized leaves than
 each of the trees $T,T_1,\dots, T_n$ (see the example above).
 Hence, there are two natural cases: Either we may emphasize only one leaf of a tree
 (di-algebra case) or an arbitrary number of leaves (tri-algebra case).
 Let us denote the operads obtained by $\di\mathfrak{F}$ or $\tri\mathfrak{F}$,
 respectively.

  \subsection{Operads  Perm and ComTrias}
 Let us state definitions of two important operads.

\begin{example}[\cite{Chapoton01}]
Let $\Sigma $ contains one binary operation.
The operad governing the variety of associative algebras satisfying the identity $(x_1x_2)x_3 = (x_2x_1)x_3$
is denoted by $\Perm $. It is easy to see that monomials 
$e_i^{(n)}=(x_1\dots x_{i-1}x_{i+1}\dots x_n)x_i$,  $i=1,\dots, n$, 
form a linear basis of $\Perm(n)$, and thus
$\dim\Perm(n)=n$.
\end{example}

\begin{example}[\cite{Vallette_2007}] 
Given $n\ge 1$, let
$C(n)$ be the formal linear span of the set of ``corollas'' $\{e^{(n)}_H \mid H\in \mathcal P(n)\}$,
where $\mathcal P(n)$ stands for the collection
 of all nonempty subsets of $\{1,\dots, n\}$.
For $K\in \mathcal P(m)$,
$H_i\in \mathcal P(n_i)$, $i=1,\dots, m$, define the composition of sets
$K(H_1,\dots, H_m)\in \mathcal P(n_1+\dots + n_m)$ as follows:
\[
\begin{aligned}
 j\in K(H_1,\dots, H_m) \iff & \exists k\in K, l\in H_k: \\
 &  n_1+\dots + n_{k-1}<j\le n_1+\dots + n_k, \\
 & j=n_1+\dots + n_{k-1}+l.
\end{aligned}
\]
Then
\[
 e^{(m)}_K(e^{(n_1)}_{H_1}, \dots, e^{(n_m)}_{H_m}) = e^{(n)}_{K(H_1,\dots, H_m)},
\]
where $n=n_1+\dots + n_m$.

With respect to the natural action of the symmetric group, the family of spaces $C(n)$, $n\ge 1$, forms a
symmetric operad denoted by $\ComTrias$.
\end{example}

The algebraic interpretation of $\ComTrias$ was stated in
\cite{Vallette_2007}.
Namely, an algebra from the variety $\ComTrias $ is a linear space
equipped with two binary operations
$\perp $ and $\vdash $ satisfying the following axioms:
\[
\begin{gathered}
 (x\vdash y)\vdash z = x\vdash(y\vdash z), \quad
(x\vdash y)\vdash z = (y\vdash x)\vdash z,  \\
(x\perp y)\vdash z = (x\vdash y)\vdash z, \quad
x\vdash (y\perp z) = (x\vdash y)\perp z, \\
(x\perp y)\perp z = x\perp (y\perp z).
\end{gathered}
\]
It is easy to see that $e^{(n)}_H\in \ComTrias (n)$
may be identified with the monomial
\[
 x_{j_1}\vdash \dots \vdash x_{j_{n-k}}\vdash (x_{i_1}\perp \dots \perp x_{i_k}),
\]
where $H=\{i_1,\dots, i_k\}$, $i_1<\dots<i_k$, $j_1<\dots <j_{n-k}$.

\begin{example}\label{exmp:C_2}
Denote by $C_2$ a 2-dimensional space with a basis
$\{e_1,e_2\}$ and operations
\[
e_i\perp e_i = e_i, \quad e_1\vdash e_1 =  e_1,
\quad
e_1\vdash e_2 =  e_2,
\]
other products are zero. It is easy to check that $C_2\in \ComTrias$.
\end{example}

Note that the composition rule in the operad $\Perm $ is completely similar to the composition in $\ComTrias $
restricted to singletons: $e^{(n)}_i\in \Perm(n)$ may be identified with $e^{(n)}_{\{i\}}\in \ComTrias(n)$.

\begin{lemma}\label{lem:ComTriasCompSums}
 Let $m\ge 1$, $n_1,\dots, n_m\ge 1$, and let $n=n_1+\dots + n_m$.
Then
\begin{multline}\nonumber
 \sum\limits_{H\in \mathcal P(n)}\sum\limits_{\begin{subarray}{c} K, H_1,\dots, H_m \\
       K(H_1,\dots, H_m)=H \\
            K\in \mathcal P(m)\\
            H_i\in \mathcal P(n_i)
\end{subarray}}
 e^{(m)}_K\otimes e^{(n_1)}_{H_1}\otimes \dots \otimes e^{(n_m)}_{H_m} \\
=
\sum\limits_{ K\in \mathcal P(m)}\sum\limits_{ H_1\in \mathcal P(n_1)} \dots
\sum\limits_{ H_m\in \mathcal P(n_m)}
 e^{(m)}_K\otimes e^{(n_1)}_{H_1}\otimes \dots \otimes e^{(n_m)}_{H_m}.
\end{multline}
\end{lemma}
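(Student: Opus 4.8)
The plan is to read the displayed identity as a pure reindexing of a single sum. The only substantive point is that the composition of subsets $K(H_1,\dots,H_m)$ from the definition of $\ComTrias$ always produces a \emph{nonempty} subset of $\{1,\dots,n\}$, i.e.\ an element of $\mathcal P(n)$; once that is known, both sides are the sum of the tensor $e^{(m)}_K\otimes e^{(n_1)}_{H_1}\otimes\dots\otimes e^{(n_m)}_{H_m}$ over exactly the same index set $\mathcal P(m)\times\mathcal P(n_1)\times\dots\times\mathcal P(n_m)$.

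First I would verify that for arbitrary $K\in\mathcal P(m)$, $H_i\in\mathcal P(n_i)$ the set $H:=K(H_1,\dots,H_m)$ lies in $\mathcal P(n)$. If $j\in H$, then $j=n_1+\dots+n_{k-1}+l$ for some $k\in K$, $l\in H_k$; from $1\le l\le n_k$ and $1\le k\le m$ one gets $1\le n_1+\dots+n_{k-1}+1\le j\le n_1+\dots+n_k\le n$, so $H\subseteq\{1,\dots,n\}$. Nonemptiness follows by choosing any $k\in K$ and any $l\in H_k$ (both possible since $K$ and $H_k$ are nonempty): then $n_1+\dots+n_{k-1}+l\in H$. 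Hence $(K,H_1,\dots,H_m)\mapsto K(H_1,\dots,H_m)$ is a well-defined total map
\[
\Phi\colon\mathcal P(m)\times\mathcal P(n_1)\times\dots\times\mathcal P(n_m)\longrightarrow\mathcal P(n).
\]

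To conclude, I would observe that the fibres $\Phi^{-1}(H)$ for $H\in\mathcal P(n)$ form a partition of the product $\mathcal P(m)\times\mathcal P(n_1)\times\dots\times\mathcal P(n_m)$; the left-hand side of the identity is precisely the sum over all $H$ of the partial sums over the fibres $\Phi^{-1}(H)$, and this therefore equals the total sum over the whole product, which is the right-hand side. The ``hard'' part is thus only the observation that $\Phi$ never returns the empty set, a one-line check; the equality is isolated as a lemma merely because it will be invoked in this packaged form later (e.g.\ when composing elements of a Hadamard product of the form ``sum over $H$ of $e^{(n)}_H\otimes(\,\cdot\,)$'').
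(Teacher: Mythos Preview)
Your argument is correct. The observation that $\Phi$ is a total map into $\mathcal P(n)$ (never producing $\emptyset$) is exactly the content of the lemma, and once that is noted the identity is indeed just the tautology $\sum_H\sum_{\Phi^{-1}(H)}=\sum_{\mathrm{dom}\,\Phi}$.

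The paper proceeds differently: it sets up an induction on $m$, recording the recursion
\[
K(H_1,\dots,H_m)=\begin{cases}
(K\setminus\{m\})(H_1,\dots,H_{m-1})\cup(n-n_m+H_m), & m\in K,\\
K(H_1,\dots,H_{m-1}), & m\notin K,
\end{cases}
\]
and then splitting off the last factor. Your direct fibre-partition argument is shorter and conceptually cleaner for this particular statement; the paper's inductive formulation, on the other hand, makes explicit the recursive structure of the composition $K(H_1,\dots,H_m)$, which is the form in which the operation actually reappears in the proof of Lemma~\ref{lem:EquivInductionTerms} (where one peels off one $\Psi_i$ at a time). So both approaches are valid; yours gets the lemma faster, while the paper's recursion foreshadows how the composition will be unwound later.
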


A similar statement holds for $\Perm $, if we restrict the sums to singletons only.

\begin{proof}
For $m=1$ the statement is obvious. It is enough to note that
\[
K(H_1,\dots, H_m)=
 \begin{cases}
   (K\setminus \{m\})(H_1,\dots, H_{m-1}) \cup (n-n_m + H_m), & m\in K, \\
   K(H_1,\dots, H_{m-1}), & m\notin K,
 \end{cases}
\]
and proceed by induction on $m$.
\end{proof}

 \subsection{Defining identities}

Let $\mathfrak M$ be a variety of $\Sigma $-algebras satisfying a family of polylinear identities $\Id(\mathfrak M)$.
Denote the operad governing this variety by the same symbol $\mathfrak{M}$, this is an image
of the free operad $\mathfrak F = \mathfrak F_\Sigma $ with respect to a morphism of operads whose
kernel equals $\Id(\mathfrak M)$.

\begin{definition}[\cite{Kol2008a,GubKol2013}]\label{defn:Di-Tri-Algebras}
Denote by $\di\mathfrak M$ and $\tri\mathfrak M$ the following Hadamard products of operads:
\[
 \di\mathfrak M = \Perm\otimes \mathfrak M , \quad
\tri\mathfrak M = \ComTrias\otimes \mathfrak M .
\]
\end{definition}

As an immediate corollary of this definition, we obtain

\begin{proposition}[\cite{Kol2008a, KolVoronin2013}]
 Let $A\in \mathfrak M $, $P\in \Perm $. Then
$P\otimes A$ equipped with operations
\[
\begin{gathered}
 f_i(x_1\otimes a_1, \dots , x_n\otimes a_n) = e_i^{(n)}(x_1, \dots ,x_n) \otimes f(a_1,\dots , a_n), \\
f\in \Sigma,\ \nu(f)=n,\ x_i\in P,\ a_i\in A,\ i=1,\dots, n,
\end{gathered}
\]
belongs to the variety $\di\mathfrak M$.
\end{proposition}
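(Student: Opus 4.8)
The plan is to recognize $P\otimes A$ as an algebra over the Hadamard product operad $\Perm\otimes\mathfrak M$, which by Definition~\ref{defn:Di-Tri-Algebras} is exactly $\di\mathfrak M$, and then to observe that the operations $f_i$ of the statement are precisely those induced by the obvious generating elements of this operad.

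First I would recall the general fact that for any two operads $\mathfrak P$, $\mathfrak Q$, a $\mathfrak P$-algebra $V$, and a $\mathfrak Q$-algebra $W$, the space $V\otimes W$ carries a canonical structure of a $(\mathfrak P\otimes\mathfrak Q)$-algebra: one identifies $(V\otimes W)^{\otimes n}$ with $V^{\otimes n}\otimes W^{\otimes n}$ by shuffling tensor factors and lets $\mu\otimes\nu\in\mathfrak P(n)\otimes\mathfrak Q(n)$ act by
\[
(\mu\otimes\nu)\big((v_1\otimes w_1)\otimes\dots\otimes(v_n\otimes w_n)\big)=\mu(v_1,\dots,v_n)\otimes\nu(w_1,\dots,w_n).
\]
One checks that this action is well defined (the right-hand side is bilinear in $(\mu,\nu)$), $S_n$-equivariant (the $S_n$-actions on $\mathfrak P\otimes\mathfrak Q$ and on $V^{\otimes n}\otimes W^{\otimes n}$ are both diagonal), and compatible with operadic composition. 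The compatibility with composition is the only point needing care: it amounts to matching the composition of the Hadamard product, $(\mu\otimes\nu)(\mu_1\otimes\nu_1,\dots,\mu_n\otimes\nu_n)=\mu(\mu_1,\dots,\mu_n)\otimes\nu(\nu_1,\dots,\nu_n)$, against the composition rules of $\mathfrak P$ and $\mathfrak Q$ separately, the only subtlety being to keep the tensor-factor shuffles consistent; this is routine (cf.\ \cite{LodayVallette}).

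Next I would specialize to $\mathfrak P=\Perm$, $\mathfrak Q=\mathfrak M$, $V=P$, $W=A$, so that $P\otimes A$ becomes a $(\Perm\otimes\mathfrak M)$-algebra, i.e.\ a $\di\mathfrak M$-algebra. It then remains to identify the operations: for $f\in\Sigma$ with $\nu(f)=n$ let $\bar f\in\mathfrak M(n)$ be its image; since $e^{(n)}_i\in\Perm(n)$, the element $e^{(n)}_i\otimes\bar f$ lies in $(\Perm\otimes\mathfrak M)(n)=\di\mathfrak M(n)$, and by the description of the action its value on $x_1\otimes a_1,\dots,x_n\otimes a_n$ equals $e^{(n)}_i(x_1,\dots,x_n)\otimes f(a_1,\dots,a_n)$, where $e^{(n)}_i(x_1,\dots,x_n)=(x_1\cdots x_{i-1}x_{i+1}\cdots x_n)x_i$ is evaluated in the $\Perm$-algebra $P$. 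This is exactly the operation $f_i$ of the statement, so $P\otimes A$ equipped with the family $\{f_i\}$ is an algebra over $\di\mathfrak M$, hence belongs to the variety $\di\mathfrak M$.

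If one prefers to avoid the general Hadamard formalism, the same conclusion follows from a direct check of identities: the assignment $f_i\mapsto e^{(n)}_i\otimes\bar f$ extends to the canonical operad morphism $\di\mathfrak F\to\di\mathfrak M=\Perm\otimes\mathfrak M$ whose kernel is $\Id(\di\mathfrak M)$, and evaluating a decorated tree $T\in\di\mathfrak F(n)$ at $x_1\otimes a_1,\dots,x_n\otimes a_n$ produces $\sum_k P_k(x_1,\dots,x_n)\otimes M_k(a_1,\dots,a_n)$, where $\sum_k P_k\otimes M_k\in\Perm(n)\otimes\mathfrak M(n)$ is the image of $T$; consequently every element of $\Id(\di\mathfrak M)$ evaluates to $0$ on $P\otimes A$. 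I expect the main obstacle along either route to be purely clerical---the coherence bookkeeping for the tensor-factor shuffles in the operadic composition---everything else being formal.
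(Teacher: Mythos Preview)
Your proposal is correct and is precisely the argument the paper has in mind: the paper presents this proposition as an ``immediate corollary'' of Definition~\ref{defn:Di-Tri-Algebras}, relying on the standard fact that a $\mathfrak P$-algebra tensored with a $\mathfrak Q$-algebra is naturally a $(\mathfrak P\otimes\mathfrak Q)$-algebra. You have simply spelled out this immediate step in detail.
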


\begin{proposition}
 Let $A\in \mathfrak M $, $C\in \ComTrias $. Then
$C\otimes A$ equipped with operations
\[
\begin{gathered}
 f_H(x_1\otimes a_1, \dots , x_n\otimes a_n) = e_H^{(n)}(x_1, \dots ,x_n) \otimes f(a_1,\dots , a_n), \\
f\in \Sigma ,\ \nu(f)=n,  \  H\in \mathcal P (n),\ x_i\in C,\ a_i\in A,\ i=1,\dots, n,
\end{gathered}
\]
belongs to the variety $\tri\mathfrak M$.
\end{proposition}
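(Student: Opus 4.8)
The plan is to realize $C\otimes A$ directly as an algebra over the operad $\tri\mathfrak M=\ComTrias\otimes\mathfrak M$; this is the operadic form of the elementary fact that a Hadamard product of operads acts on the tensor product of algebras over its two factors. Since $A\in\mathfrak M$ and $C\in\ComTrias$, we are given $S_n$-equivariant, composition-compatible structure maps $\mu^A\colon\mathfrak M(n)\otimes A^{\otimes n}\to A$ and $\mu^C\colon\ComTrias(n)\otimes C^{\otimes n}\to C$, each defined on the whole of the corresponding operad space. Identifying $(C\otimes A)^{\otimes n}$ with $C^{\otimes n}\otimes A^{\otimes n}$ by the canonical reshuffle, I would set
\[
\mu\big((\omega\otimes u);\,x_1\otimes a_1,\dots,x_n\otimes a_n\big)=\mu^C(\omega;x_1,\dots,x_n)\otimes\mu^A(u;a_1,\dots,a_n)
\]
for $\omega\in\ComTrias(n)$, $u\in\mathfrak M(n)$, $x_i\in C$, $a_i\in A$, and extend bilinearly; this is well defined because $A\in\mathfrak M$ and $C\in\ComTrias$ make $\mu^A$ and $\mu^C$ defined on the full operad spaces.

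Next I would check that $\mu$ is an action of $\tri\mathfrak M(n)=\ComTrias(n)\otimes\mathfrak M(n)$. Linearity in the operad argument is immediate. Equivariance follows from that of $\mu^C$ and $\mu^A$, since the permutation action on the $n$ tensor factors of $(C\otimes A)^{\otimes n}$ corresponds under the reshuffle to the product of the permutation actions on $C^{\otimes n}$ and $A^{\otimes n}$ (no signs appear, all spaces being plain linear spaces). For compatibility with operadic composition I would write $\omega=e^{(m)}_K\otimes u$ and $\omega_i=e^{(n_i)}_{H_i}\otimes u_i$, where $K\in\mathcal P(m)$, $H_i\in\mathcal P(n_i)$, $u\in\mathfrak M(m)$, $u_i\in\mathfrak M(n_i)$, $n=n_1+\dots+n_m$, use that composition in the Hadamard product is componentwise and that $e^{(m)}_K(e^{(n_1)}_{H_1},\dots,e^{(n_m)}_{H_m})=e^{(n)}_{K(H_1,\dots,H_m)}$, and then observe that, by compatibility of $\mu^C$ and $\mu^A$ with the compositions in $\ComTrias$ and in $\mathfrak M$, both the single application of $\mu$ to $\omega(\omega_1,\dots,\omega_m)$ and the nested applications reduce to
\[
\mu^C\big(e^{(n)}_{K(H_1,\dots,H_m)};\,\dots\big)\otimes\mu^A\big(u(u_1,\dots,u_m);\,\dots\big),
\]
with the same arguments in both slots. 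This proves $(C\otimes A,\mu)$ is an algebra over the operad $\tri\mathfrak M$.

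It remains to match the operations. The variety $\tri\mathfrak M$ is governed by the decorated operations $f_H$, $f\in\Sigma$, $H\in\mathcal P(\nu(f))$, and under the identification $\tri\mathfrak M=\ComTrias\otimes\mathfrak M$ the generator $f_H$ is the element $e^{(n)}_H\otimes f\in(\ComTrias\otimes\mathfrak M)(n)$, $n=\nu(f)$, where $f$ on the right also denotes the image of the operation $f$ in $\mathfrak M(n)$. By the definition of $\mu$,
\[
\mu\big(e^{(n)}_H\otimes f;\,x_1\otimes a_1,\dots,x_n\otimes a_n\big)=e^{(n)}_H(x_1,\dots,x_n)\otimes f(a_1,\dots,a_n),
\]
which is exactly the operation $f_H$ in the statement. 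Hence the algebra $(C\otimes A,\{f_H\})$ is the $\tri\mathfrak M$-algebra just constructed, and so it lies in the variety $\tri\mathfrak M$.

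I expect the only fiddly point to be the index bookkeeping in the composition step: one must align the slot-wise reshuffles used for $(C\otimes A)^{\otimes n}\cong C^{\otimes n}\otimes A^{\otimes n}$ with the grafting of the $\omega_i$ into $\omega$, so that the nested applications of $\mu$ genuinely reproduce the single application to $e^{(n)}_{K(H_1,\dots,H_m)}\otimes u(u_1,\dots,u_m)$; once the indices are aligned the computation is routine, and no feature of $\ComTrias$ beyond the corolla composition rule is used, which is why, as with the $\Perm$-version, the statement is essentially a formal consequence of Definition~\ref{defn:Di-Tri-Algebras}. If in addition one wants the phrase ``the variety $\tri\mathfrak M$'' to make literal sense as a class of $\{f_H\}$-algebras, that is, that the $f_H$ generate $\ComTrias\otimes\mathfrak M$ as an operad, this is precisely where Lemma~\ref{lem:ComTriasCompSums} is used, to rewrite each $e^{(m)}_K\otimes u(u_1,\dots,u_m)$ as a linear combination of composites of generators.
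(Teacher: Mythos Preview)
Your proof is correct and spells out precisely what the paper leaves implicit when it introduces this proposition (together with its $\Perm$-analogue) as an ``immediate corollary'' of Definition~\ref{defn:Di-Tri-Algebras}: the tensor product of an algebra over $\ComTrias$ with an algebra over $\mathfrak M$ is canonically an algebra over the Hadamard product $\ComTrias\otimes\mathfrak M=\tri\mathfrak M$, and under this action the generator $e^{(n)}_H\otimes f$ acts by the formula in the statement. The paper gives no separate argument, so your write-up is the appropriate unpacking.

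One small correction to your closing remark: Lemma~\ref{lem:ComTriasCompSums} is not the device used to show that the $f_H$ generate $\tri\mathfrak M$. That fact is argued in the paragraph immediately following the propositions, via the explicit preimage algorithm $\Phi\mapsto\Phi^H$ (surjectivity of $\iota(m)\colon\mathfrak F_{\Sigma^{(3)}}(m)\to\tri\mathfrak F(m)$). Lemma~\ref{lem:ComTriasCompSums} is instead a combinatorial sum identity used later, in the proof of Lemma~\ref{lem:EquivInductionTerms}. For the proposition itself no generation statement is needed anyway, since at this point $\tri\mathfrak M$ is by definition the operad $\ComTrias\otimes\mathfrak M$, and you have exhibited $C\otimes A$ as an algebra over it.
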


In general, it is not clear which operations generate a Hadamard product of two operads (even if
the operads are binary).
However, operads $\mathfrak P=\Perm ,\ComTrias $ are good enough to allow finding
generators and defining relations of $\mathfrak P\otimes \mathfrak M$. 
In particular, if $\mathfrak M$ is a binary quadratic operad
then $\mathfrak P\otimes \mathfrak M = \mathfrak P\medcirc \mathfrak M$, where $\medcirc $ stands for the
Manin white product of operads. 
The purpose of this section is to present explicitly defining relations of $\tri\mathfrak{M}$
(for $\di\mathfrak M$, the algorithm was presented in \cite{DiJordTriple}, 
see also \cite{KolVoronin2013}).

First, let us note that the operad $\tri\mathfrak F$ is generated by
\[
 \Sigma^{(3)} = \{f^H \mid f\in \Sigma,\, \nu(f)=n,\, H\in \mathcal P(n) \}.
\]
Indeed, there exists a morphism of operads
$\iota: \mathfrak F_{\Sigma^{(3)}}\to \tri\mathfrak F$
sending $f^H$ to $e^{(n)}_H\otimes f$, $f\in \Sigma$, $\nu(f)=n$.
Therefore, every $D\in \tri\mathfrak M$ may be considered as a $\Sigma^{(3)}$-algebra.
Note that for every $f,g\in \Sigma $, $\nu(f)=n$, $\nu(g)=m$,
and for every $a_k,b_j\in D$ we have
\begin{multline}\label{eq:Zero-Identities}
 f^H(a_1,\dots , a_{i-1}, g^S(b_1,\dots, b_m), a_{i+1}, \dots, a_n) \\
=
 f^H(a_1,\dots , a_{i-1}, g^Q(b_1,\dots, b_m), a_{i+1}, \dots, a_n)
\end{multline}
for all $H\in \mathcal P(n)$, $S,Q\in \mathcal P(m)$ provided that $i\notin H$.
Indeed, by the definition of $\ComTrias$, the composition
\[
 e^{(n)}_H(\id,\dots, \underset{i}{e^{(m)}_S}, \dots, \id)
\]
does not depend on $S$ if $i\notin H$.

Moreover, each $\iota(m): \mathfrak F_{\Sigma^{(3)}}(m)\to \tri\mathfrak F(m)$, $m\ge 1$,
is surjective. The natural algorithm of constructing a canonical pre-image $\Phi^H \in \mathfrak F_{\Sigma^{(3)}}(m)$
of $e^{(m)}_H\otimes \Phi \in \tri\mathfrak F(m)$
with respect to $\iota(m)$ is stated in
\cite{GubKol2013} for binary case. In the general case, the algorithm remains the same:
Assume the pre-images are constructed for all terms of degree smaller than $m$.
For a monomial $u=u(x_1,\dots, x_m)\in \mathfrak F(m)$,
one may consider $e^{(m)}_H\otimes u\in \tri\mathfrak F(m)$ as a planar tree
with emphasized leaves $x_{i_1},\dots, x_{i_k}$, where $\{i_1,\dots, i_k\}=H$.
If $u=f(v_1,\dots, v_n)$, $f\in \Sigma $, $\nu(f)=n$, $v_i\in \mathfrak F(m_i)$,
then choose
$K=\{i\mid v_i \mbox{ contains }x_j,\, j\in H\}$ and set
$u^H = f^K(v_1^{H_1},\dots, v_n^{H_n})$,
where
\[
 H_i=\begin{cases}
          \{j\mid j\in H,\,x_j \mbox{ appears in } v_i\}, & i\in K, \\
          \{1\}, & i\notin K.
     \end{cases}
\]

Next, suppose $\Phi(x_1,\dots, x_m) \in \mathfrak F(m)$ is a polylinear identity on all algebras of
a variety $\mathfrak M$,
i.e., $\Phi $ belongs to the kernel of natural morphisms of operads $\tau_{\mathfrak M}: \mathfrak F\to \mathfrak M$.
Then $e^{(m)}_H\otimes \Phi $ belongs to the kernel of
$\id\otimes \tau_{\mathfrak M}: \tri\mathfrak F \to \tri\mathfrak M$.
Hence, $\Phi^H(x_1,\dots, x_m) \in \mathfrak F_{\Sigma^{(3)}}$
is an identity on all algebras in $\tri\mathfrak M$.

Suppose the variety $\mathfrak M$ is defined by a set of
polylinear identities $S(\mathfrak M)\subset \Id(\mathfrak M)$.
As we have shown above, every algebra in $\tri\mathfrak M$
may be considered as a $\Sigma^{(3)}$-algebra satisfying
the collection of identities  $S^{(3)}(\mathfrak M)$ that consists of
\eqref{eq:Zero-Identities} and
$\Phi^H(a_1,\dots, a_m)=0$ for all $\Phi \in S(\mathfrak M)\cap \mathfrak F(m)$, $H\in \mathcal P(m)$,
$m\ge 1$.

Let us prove that $S(\tri\mathfrak M)=S^{(3)}(\mathfrak M)$, i.e.,
every $\Sigma^{(3)}$-algebra satisfying
$S^{(3)}(\mathfrak M)$
is actually an algebra of the variety governed by $\tri\mathfrak M$.

\begin{theorem}\label{thm:EmbeddingIntoCurrent}
Suppose $\nu(f)\ge 2$ for all $f\in \Sigma $.
Then
every $\Sigma^{(3)}$-algebra satisfying $S^{(3)}(\mathfrak M)$
may be embedded into an appropriate algebra of the form $C\otimes A\in \tri\mathfrak M $,
where $C\in \ComTrias $, $A\in \mathfrak M$.
\end{theorem}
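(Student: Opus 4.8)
The plan is to realize an abstract $\Sigma^{(3)}$-algebra $D$ satisfying $S^{(3)}(\mathfrak M)$ inside a tensor product $C\otimes A$ by a ``generic'' construction, mimicking how one embeds a Leibniz algebra into a tensor product $\Perm\otimes\Lie$ (the current Lie algebra) in the di-algebra theory of \cite{Kol2008a,KolVoronin2013}. First I would build the \emph{universal enveloping $\mathfrak M$-algebra} $A=U_{\mathfrak M}(D)$ together with a suitable $\ComTrias$-algebra $C$ acting as the ``book-keeping'' component. The natural candidate for $C$ is (a subalgebra or quotient of) the free $\ComTrias$-algebra on the underlying set of $D$, or more economically the algebra $C_2$ of Example~\ref{exmp:C_2}: the key feature of $C_2$ is that $e_1$ behaves like a two-sided ``unit slot'' while $e_2$ is a ``non-emphasized'' slot, precisely matching the dichotomy between emphasized and non-emphasized leaves in $\tri\mathfrak F$.

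The core step is to define the map $D\to C\otimes A$. Following the decoration algorithm recalled just before the theorem, every element of $D$ is a $\Sigma^{(3)}$-monomial $u^H$ in generators, and the identities \eqref{eq:Zero-Identities} say that the non-emphasized leaves may carry arbitrary subsets without changing the element; this is exactly the statement that in $C\otimes A$ one may place $e_1$ (the unit-like basis vector, absorbing $\vdash$ on the left) in all non-emphasized positions. So I would send a generator $x\in X$ (a chosen generating set of $D$, e.g. $D$ itself) to $e_2\otimes \bar x\in C_2\otimes U_{\mathfrak M}(D)$ when it sits in the ``emphasized'' role, and propagate through the operations $f^H$ using Proposition~2.9: $f^H$ acts on $C_2\otimes A$ by $e^{(n)}_H\otimes f$. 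One then checks, using Lemma~\ref{lem:ComTriasCompSums} to handle the bookkeeping of subsets under composition, that the relations $S^{(3)}(\mathfrak M)$ — namely \eqref{eq:Zero-Identities} and $\Phi^H=0$ for $\Phi\in S(\mathfrak M)$ — are automatically satisfied in $C_2\otimes U_{\mathfrak M}(D)$, because $\Phi^H$ maps to $e^{(m)}_H\otimes\Phi$ and $\Phi$ vanishes in any $\mathfrak M$-algebra, in particular in $U_{\mathfrak M}(D)$. Hence the map is a well-defined $\Sigma^{(3)}$-homomorphism into an object of $\tri\mathfrak M$.

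The main obstacle will be \textbf{injectivity}. Well-definedness and the membership $C\otimes A\in\tri\mathfrak M$ are essentially formal (Propositions 2.8--2.9 plus Lemma~\ref{lem:ComTriasCompSums}); what requires real work is showing that the generic construction loses no information, i.e. that no new relations among the images of generators are forced beyond those in $S^{(3)}(\mathfrak M)$. The standard device is to take $A$ to be a \emph{free} $\mathfrak M$-algebra on a large enough set and $C$ large enough (e.g. $C=$ free $\ComTrias$-algebra, or a direct sum of copies of $C_2$ indexed by generators of $D$) so that a PBW-type normal form / linear basis of $\tri\mathfrak F$ modulo $S^{(3)}(\mathfrak M)$ can be matched term-by-term with a basis of the relevant part of $C\otimes A$. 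Concretely I would: (i) fix a linear basis of the free $\tri\mathfrak M$-algebra $\tri\mathfrak M\langle X\rangle$ via the canonical pre-images $u^H$; (ii) exhibit these as linearly independent elements of $C\otimes \mathfrak M\langle Y\rangle$ for appropriate $C$, $Y$, reading off the $H$-decoration from the $\ComTrias$-component and the underlying $\mathfrak M$-monomial from the other tensor factor; (iii) conclude that the composite $\tri\mathfrak M\langle X\rangle \twoheadrightarrow (\text{$\Sigma^{(3)}$-algebra on }X\text{ with }S^{(3)}(\mathfrak M)) \to C\otimes A$ is injective, hence all three are isomorphic and $D$ itself (a quotient of the free such algebra) embeds into a corresponding $C'\otimes A'$. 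The hypothesis $\nu(f)\ge 2$ enters here: with nullary or unary operations the decoration mechanism degenerates and the separation of the $H$-data from the $\mathfrak M$-data in the tensor product can fail, so it is needed to guarantee the two tensor factors genuinely decouple in the normal form.
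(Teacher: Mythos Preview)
Your proposal misidentifies where the difficulty lies, and the route you sketch would likely stall. You plan to take $A=U_{\mathfrak M}(D)$ (or a free $\mathfrak M$-algebra) and then fight for injectivity via a PBW-type normal form; but there is no reason in general why the canonical map from $D$ into such an envelope should be injective, and proving a PBW statement for an \emph{arbitrary} operad $\mathfrak M$ is exactly the kind of hard problem one wants to avoid. Your tentative map $x\mapsto e_2\otimes\bar x$ is also not a homomorphism on its own: a single tensor slot cannot record both the ``emphasized'' and the ``collapsed'' behaviour simultaneously.

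The paper sidesteps all of this by constructing $A$ directly from $T$ rather than freely. One first forms the ideal $T_0$ spanned by all differences $f^H(a_1,\dots,a_n)-f^K(a_1,\dots,a_n)$, sets $\bar T=T/T_0$ (a genuine $\Sigma$-algebra), and then defines $\widetilde T=\bar T\oplus T$ with an explicit $\Sigma$-algebra structure mixing the two summands. The embedding is then
\[
a\ \longmapsto\ e_1\otimes\bar a\ +\ e_2\otimes a\ \in\ C_2\otimes\widetilde T,
\]
which is \emph{obviously} injective because the $e_2$-component returns $a$. All the work moves to the lemma that $\widetilde T\in\mathfrak M$, which is a direct inductive check using $S^{(3)}(\mathfrak M)$; no PBW, no free algebras, no basis bookkeeping. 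The hypothesis $\nu(f)\ge2$ is used not to ``decouple tensor factors'' but to ensure $T_0$ is an ideal (with unary operations one would need them to be derivations or endomorphisms, cf.\ Remark~\ref{rem:UnitaryOps}). So the missing idea in your plan is precisely this two-component target $e_1\otimes\bar a+e_2\otimes a$ into $C_2\otimes(\bar T\oplus T)$; once you have it, injectivity is free and the proof is short.
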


An analogous statement for $\di\mathfrak M $
was proved in \cite{KolVoronin2013}.

\begin{proof}
Given an algebra  $T\in \tri\mathfrak M$, denote by
$T_0\subseteq T$ the linear span of all
\[
  (e^{(n)}_H \otimes f)(a_1,\dots, a_n) - (e^{(n)}_K \otimes f)(a_1,\dots, a_n) ,
\]
$K,H\in \mathcal P(n)$, $a_i\in T$, $f\in \Sigma $, $\nu(f)=n$.
It follows from the definition of $\ComTrias $ that $T_0$ is an ideal in
$T$, and $\bar T = T/T_0$ may be considered as a $\Sigma $-algebra.
Moreover, the direct sum of linear spaces
\[
 \widetilde T = \bar T \oplus T
\]
turns into a $\Sigma$-algebra  with respect to
operations
\begin{equation}\label{eq:HatConstruction}
f(\bar a_1+b_1, \dots , \bar a_n + b_n)
 = \overline{ f^{K}(a_1, \dots, a_n) } +
 \sum\limits_{H\in \mathcal P(n)}
  f^H(c_1^H, \dots, c_n^H),
\end{equation}
($K$ is an arbitrary set in $\mathcal P(n)$)
$f\in \Sigma $, $\nu(f) = n$, $a_i,b_i\in T$, and
\[
 c_i^H = \begin{cases}
           a_i, & i\notin H, \\
           b_i , & i\in H.
         \end{cases}
\]

\begin{lemma}\label{lem:Tilde_in_Var}
$\widetilde T\in \mathfrak M $.
\end{lemma}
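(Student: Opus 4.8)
The plan is to verify directly that the $\Sigma$-algebra $\widetilde T = \bar T \oplus T$ satisfies every defining identity of $\mathfrak M$. Since $\mathfrak M$ is defined by the polylinear identities $S(\mathfrak M)$, it suffices to take an arbitrary $\Phi(x_1,\dots,x_m) \in S(\mathfrak M)\cap \mathfrak F(m)$ and check that $\Phi$ vanishes on $\widetilde T$. The key structural observation is that $\widetilde T$ has a grading-like decomposition: elements of $T$ form an ideal (because by \eqref{eq:Zero-Identities}, plugging a $T$-element into any slot of an operation from $\widetilde T$ makes the $\bar T$-component of the output vanish and leaves only $T$-valued terms), and the quotient by this ideal is $\bar T$. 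So when we evaluate $\Phi(\bar a_1 + b_1, \dots, \bar a_m + b_m)$ and expand by multilinearity, the result splits into a sum over subsets: the term where \emph{all} arguments are taken from $\bar T$ lands in $\bar T$ and equals $\overline{\Phi(a_1,\dots,a_m)}$ computed via the representatives $f^K$; all other terms (where at least one $b_i$ is used) land in $T$.

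First I would handle the $\bar T$-component. By construction $\bar T = T/T_0$ with the $\Sigma$-algebra structure induced by any choice of decorations $f^K$, and $\bar T$ is a quotient of an algebra in $\tri\mathfrak M$, hence (after forgetting the $\Sigma^{(3)}$-structure down to the $\Sigma$-structure given by $e^{(n)}_K\otimes f$, which is well defined modulo $T_0$) it lies in $\mathfrak M$. Therefore $\overline{\Phi(a_1,\dots,a_m)} = 0$ in $\bar T$. Next I would handle the $T$-component: each surviving term is a sum, over $H \in \mathcal P(m)$, of expressions $\Phi^H(c_1^H,\dots,c_m^H)$-type contributions, where $c_i^H$ equals $b_i$ for $i\in H$ and $a_i$ for $i\notin H$. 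The crucial point is that for a fixed nonempty set $H$ of "emphasized" slots, the substitution rule \eqref{eq:HatConstruction} is precisely the composition that builds $\Phi^H$ out of the generators $f^H$ following the recursive recipe described just before the theorem (choose $K = \{i \mid v_i \text{ contains an emphasized variable}\}$, decorate recursively). Since $\Phi \in S(\mathfrak M)$ and $T$ is an algebra in $\tri\mathfrak M$, it satisfies $\Phi^H = 0$ by definition of $S^{(3)}(\mathfrak M)$; hence each such contribution vanishes in $T$.

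The main obstacle — and the step deserving care — is the bookkeeping that matches the expansion of $\Phi(\bar a_1+b_1,\dots,\bar a_m+b_m)$ under the operations \eqref{eq:HatConstruction} with the family $\{\Phi^H\}_{H\in\mathcal P(m)}$. One must check that when a composite monomial $u = f(v_1,\dots,v_n)$ is evaluated, the inductively chosen decoration $f^K$ with $K = \{i : v_i \text{ meets } H\}$ is exactly the set selected by \eqref{eq:HatConstruction} when the arguments carrying $b$'s are those indexed by $H$; and that, by the zero-identities \eqref{eq:Zero-Identities}, the value of $f^H(c_1^H,\dots,c_n^H)$ in the non-emphasized slots does not depend on which decoration the subterms there carry, so the $a_i$-subterms may be computed using any representative. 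This is the same verification carried out in \cite{GubKol2013} in the binary case, and the argument is identical in the general polyadic setting. Assembling the two components, $\Phi$ evaluates to $0 + 0 = 0$ on $\widetilde T$, so $\widetilde T \in \mathfrak M$.
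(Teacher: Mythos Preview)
Your proposal is correct and follows essentially the same route as the paper's proof, which is extremely terse: it simply says that \eqref{eq:HatConstruction} and \eqref{eq:Zero-Identities} imply $\Phi(\bar a_1+b_1,\dots,\bar a_m+b_m)=0$ by induction on the length of monomials, deferring details to \cite{GubKol2013}. You have unpacked exactly this induction --- splitting the value into its $\bar T$-component (which is $\overline{\Phi^K(a_1,\dots,a_m)}=0$) and its $T$-component (which reassembles into $\sum_{H}\Phi^H(c^H)=0$), with \eqref{eq:Zero-Identities} handling the ambiguity of decorations in non-emphasized branches --- so the two arguments are the same in substance.
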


\begin{proof}
In \cite{GubKol2013}, this statement was proved in the binary case. The general case
is similar.

Suppose $\Phi (x_1,\dots, x_m)\in S(\mathfrak M)$. Then
\eqref{eq:HatConstruction} and \eqref{eq:Zero-Identities} imply
$\Phi(\bar a_1+b_1, \dots , \bar a_m + b_m) = 0$ for $a_i,b_j\in T$
by induction on the length of monomials.
\end{proof}

Recall the algebra $C_2\in \ComTrias $ from Example~\ref{exmp:C_2}.
Note that the map
$T \to C_2\otimes \widetilde T $,
given by
\[
 a\mapsto e_1\otimes \bar a + e_2\otimes a \in C_2\otimes \widetilde T,\quad a\in T,
\]
is a homomorphism of $\Sigma^{(3)}$-algebras.
Indeed, let $f\in \Sigma $, $\nu(f)=n$, $H\in \mathcal P(n)$,
$x_i = e_1\otimes \bar a_i + e_2\otimes a_i$, $a_i\in T$, $i=1,\dots, n$.
Then
\begin{multline}\nonumber
 (e^{(n)}_H \otimes f)(x_1, \dots, x_n)
=
 e^{(n)}_H(e_1,\dots, e_1) \otimes f(\bar a_1, \dots, \bar a_n) \\
=
 \sum\limits_{K\in \mathcal P(n)}
  e^{(n)}_H(e^K_{1},\dots, e^K_{n}) \otimes f(c^K_1, \dots, c_n^K),
\end{multline}
where
\[
 e_{k}^K = \begin{cases}
               e_1, & k\notin K, \\
               e_2, & k\in K,
             \end{cases}
\quad
c_i^K = \begin{cases}
               \bar a_i, & i\notin K, \\
               a_i, & i\in K.
             \end{cases}
\]
It is easy to note from the definition of $C_2$ that
  $ e^{(n)}_H(e^K_{1},\dots, e^K_{n}) \ne 0$
if and only if $K=H$ (in this case, the result is equal to $e_2$).
Hence,
\begin{multline} \nonumber
 (e^{(n)}_H \otimes f)(x_1, \dots, x_n)
=
 e_1 \otimes f(\bar a_1, \dots, \bar a_n) + e_2\otimes f(c^H_1, \dots, c_n^H) \\
=
 e_1\otimes \overline {f^H(a_1,\dots, a_n)} + e_2\otimes f^H(a_1,\dots, a_n).
\end{multline}
\end{proof}

\begin{remark}\label{rem:UnitaryOps}
Note that Theorem \ref{thm:EmbeddingIntoCurrent} remains valid for
 languages with unary operators $t\in \Sigma $, $\nu(t)=1$, provided that $S(\mathfrak M)$
 includes identities stating all these $t$ are endomorphisms or derivations with respect 
 to all $f\in \Sigma $, $\nu(f)>1$. In this case,
 $T_0$ is invariant with respect to $t$, and thus $\widetilde T$ exists.
\end{remark}

Therefore, if $T$ satisfies $S^{(3)}(\mathfrak M)$  then it is a subalgebra in
$C_2\otimes \widetilde T\in \tri\mathfrak M$, so $T\in \tri\mathfrak M$.

As it was shown in \cite{KolVoronin2013}, the variety governed by
$\di\mathfrak M=\Perm\otimes \mathfrak M$
may be represented as a variety of  $\Sigma^{(2)}$-algebras defined by
$S^{(2)}(\mathfrak M)$, where $\Sigma^{(2)}$ and $S^{(2)}(\mathfrak M)$
are obtained from $\Sigma $ and $S(\mathfrak M)$ in the same way
as $\Sigma^{(3)}$ and $S^{(3)}(\mathfrak M)$ provided that we consider
only singletons $H=\{i\}\in \mathcal P(n)$.

Examples include Leibniz algebras ($\di\Lie$) \cite{DiLie}, dialgebras ($\di\As$)
\cite{LodayPir1993}, semi-special quasi-Jordan algebras ($\di\Jord$)
\cite{Kol2008a,DiJord1,DiJord2}, Lie and Jordan triple di-systems (di-LTS \cite{DiLieTriple}
and di-JTS \cite{DiJordTriple}),
Malcev di-algebras ($\di\Mal$) \cite{DiMal},
dual pre-Poisson algebras (di-Pois) \cite{Aguiar2000},
triassociative algebras ($\tri\As $) \cite{TriAndPostAs}.

\begin{example}\label{exmp:TriLie}
Let us write down defining identities of $\tri\Lie$-algebras.
An algebra from $\tri\Lie$ is a linear space
with three binary operations $[\cdot \perp\cdot ]$,
$[\cdot \vdash\cdot ]$,  and $[\cdot \dashv\cdot ]$,
$[a \vdash b ] = -[b \dashv a ]$,
such that $[\cdot \perp \cdot ]$ is a Lie operation, $[\cdot \dashv\cdot ]$ satisfies (right) Leibniz identity,
and they satisfy the following axioms:
\begin{equation}\label{eq:TriLie}
\begin{gathered}{}
[x_1\perp [x_2\dashv x_3]]=[[x_1\dashv x_2]\perp x_3]+[x_2\perp [x_1\dashv x_3]], \\
[x_1\dashv [x_2\perp x_3]]=[x_1\dashv [x_2\dashv x_3]].
\end{gathered}
\end{equation}
Let us note that the first identity of \eqref{eq:TriLie}
appeared recently in \cite{Uchino2011}.
\end{example}

\begin{lemma}\label{lem:Functorial_Tilde}
If $\varphi:T\to T'$ is a homomorphism of $\tri\mathfrak N$-algebras then
$\tilde\varphi: \widetilde T\to \widetilde T'$ defined by
$\tilde \varphi (\bar a) = \overline{\varphi(a)}$, $\tilde \varphi(a)=\varphi(a)$, $a\in A$,
is a homomorphism of $\mathfrak N$-algebras.
\end{lemma}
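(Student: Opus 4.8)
The plan is to verify directly that the two formulas defining $\tilde\varphi$ are compatible with the $\Sigma$-algebra structure on $\widetilde T = \bar T \oplus T$ given by \eqref{eq:HatConstruction}. Since $\widetilde T$ decomposes as a direct sum of linear spaces with $T$ sitting inside as an ideal (the second summand) and $\bar T = T/T_0$ as the quotient component, and since $\tilde\varphi$ is built from the two ingredients $\varphi$ and the induced map $\bar\varphi\colon \bar T\to \bar{T'}$, the only thing to check is that each of these ingredients is well-defined and that they interact correctly through the mixed products appearing in \eqref{eq:HatConstruction}.

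First I would observe that $\varphi(T_0)\subseteq T_0'$: indeed $T_0$ is spanned by elements of the form $(e^{(n)}_H\otimes f)(a_1,\dots,a_n) - (e^{(n)}_K\otimes f)(a_1,\dots,a_n)$, i.e.\ by $f^H(a_1,\dots,a_n)-f^K(a_1,\dots,a_n)$ in $\Sigma^{(3)}$-notation, and $\varphi$ being a $\Sigma^{(3)}$-homomorphism sends such an element to $f^H(\varphi a_1,\dots,\varphi a_n)-f^K(\varphi a_1,\dots,\varphi a_n)\in T_0'$. Hence $\bar\varphi(\bar a):=\overline{\varphi(a)}$ is a well-defined linear map $\bar T\to\bar{T'}$, and it is a homomorphism of $\Sigma$-algebras because the $\Sigma$-structure on $\bar T$ is the one inherited from the $\Sigma^{(3)}$-structure on $T$ by collapsing the decoration (all $f^H$ become the single operation $f$). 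This shows $\tilde\varphi$ is a well-defined linear map $\widetilde T\to\widetilde{T'}$.

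Next I would check that $\tilde\varphi$ preserves each operation $f\in\Sigma$, $\nu(f)=n$, by applying $\tilde\varphi$ to the right-hand side of \eqref{eq:HatConstruction}. Writing inputs as $\bar a_i + b_i$ with $a_i,b_i\in T$, the value $f(\bar a_1+b_1,\dots,\bar a_n+b_n)$ equals $\overline{f^K(a_1,\dots,a_n)} + \sum_{H\in\mathcal P(n)} f^H(c_1^H,\dots,c_n^H)$ with $c_i^H$ as in the statement. Applying $\tilde\varphi$ term by term: on the first summand it gives $\overline{\varphi(f^K(a_1,\dots,a_n))} = \overline{f^K(\varphi a_1,\dots,\varphi a_n)}$; on each summand of the sum it gives $\varphi(f^H(c_1^H,\dots,c_n^H)) = f^H(\varphi c_1^H,\dots,\varphi c_n^H)$, and since $\varphi c_i^H$ is $\varphi a_i$ when $i\notin H$ and $\varphi b_i$ when $i\in H$, this is exactly the $H$-term for the inputs $\overline{\varphi a_i} + \varphi b_i$. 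Summing, the result is precisely $f(\overline{\varphi a_1}+\varphi b_1,\dots,\overline{\varphi a_n}+\varphi b_n) = f(\tilde\varphi(\bar a_1+b_1),\dots,\tilde\varphi(\bar a_n+b_n))$, which is what we want.

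There is essentially no hard step here; the only point requiring care — and the one I would expect a referee to want spelled out — is the independence of the formula from the choice of $K\in\mathcal P(n)$ in the first summand of \eqref{eq:HatConstruction}, which is guaranteed already by the identities \eqref{eq:Zero-Identities} holding in $\bar T$ (where all decorations are identified), so that $\overline{f^K(a_1,\dots,a_n)}$ does not depend on $K$; this is precisely why the $\Sigma$-structure on $\widetilde T$ was well-defined in the first place, and $\tilde\varphi$ inherits the compatibility for free. The case of unary operators, if present, is handled exactly as in Remark~\ref{rem:UnitaryOps}: $\varphi(T_0)\subseteq T_0'$ already shows $\tilde\varphi$ respects the induced action of any $t\in\Sigma$ with $\nu(t)=1$.
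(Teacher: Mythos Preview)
Your proposal is correct and follows exactly the paper's approach. The paper's own proof consists of two sentences: it observes that $\varphi(T_0)\subseteq T_0'$ (so $\tilde\varphi$ is well-defined) and then declares the homomorphism verification ``straightforward''; you have simply spelled out those two steps in detail.
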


\begin{proof}
It follows from the construction (see Theorem \ref{thm:EmbeddingIntoCurrent}) that
$\varphi(T_0)\subseteq T'_0$. Hence, $\tilde \varphi $ is a well-defined map, and it is
straightforward to check that $\varphi $ is a homomorphism of $\mathfrak N$-algebras.
\end{proof}

\subsection{Averaging operators}

Theorem \ref{thm:EmbeddingIntoCurrent} provides a powerful tool for solving
various problems for di- and tri-algebras (see Section~\ref{sec4}). Let us state here
an equivalent definition of $\tri\mathfrak M$ by means of averaging operators.

\begin{definition}
Suppose $A$ is a $\Sigma $-algebra. A linear map $t: A\to A$ is called
an {\em averaging operator} on $A$ if
\[
 f(ta_1,\dots , ta_n) = tf(ta_1,\dots, ta_{i-1}, a_i, ta_{i+1}, \dots , ta_n)
\]
for all $f\in \Sigma $, $\nu (f)=n$, $a_j\in A$, $i,j = 1,\dots, n$.

Let us call $t$ a {\em homomorphic averaging operator} if
\[
 f(ta_1,\dots , ta_n)= tf(a_1^H,\dots , a_n^H),
\]
where $H\in \mathcal P(n)$,
\begin{equation}\label{eq:a^H_Definition}
 a_i^H = \begin{cases}
           a_i, & i\in H, \\
            ta_i, & i\notin H.
         \end{cases}
\end{equation}
\end{definition}

Given a $\Sigma $-algebra $A$ equipped with a homomorphic averaging operator
$t$, denote by $A^{(t)}$ the following $\Sigma^{(3)}$-algebra:
\[
 f^H(a_1,\dots , a_n)= f(a_1^H,\dots , a_n^H),
\]
where $f\in \Sigma $, $\nu(f)=n$, $H\in \mathcal P(n)$, $a_i\in A$,
$a_i^H$ are given by \eqref{eq:a^H_Definition}.

If $t$ were an averaging operator on $A$ then the same rule defines
$\Sigma^{(2)}$-algebra $A^{(t)}$ provided that all $H$ are singletons.

\begin{theorem}\label{thm:AveragingOper}
Suppose $\nu(f)\ge 2$ for all $f\in \Sigma $.
\begin{enumerate}
\item If $A\in \mathfrak M$ and $t$ is an averaging operator on $A$
then $A^{(t)}$ is a $\di\mathfrak M$-algebra.

\item If $A\in \mathfrak M$ and $t$ is a homomorphic averaging operator on $A$
then $A^{(t)}$ is a $\tri\mathfrak M$-algebra.

\item Every $D\in \di\mathfrak M$ may be embedded into $A^{(t)}$
for an appropriate $A\in \mathfrak M$ with an averaging operator~$t$.

\item Every $T\in \tri\mathfrak M$ may be embedded into $A^{(t)}$
for an appropriate $A\in \mathfrak M$ with a homomorphic averaging operator~$t$.
\end{enumerate}
\end{theorem}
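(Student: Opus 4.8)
The four claims split into two easy directions and two embedding directions, and I would organize the argument so that the averaging-operator picture is translated into the already-established $C\otimes A$ picture of Theorem~\ref{thm:EmbeddingIntoCurrent} and Theorem~\ref{thm:AveragingOper} is reduced to it. For parts (1) and (2), the plan is a direct verification: given $A\in\mathfrak M$ with a (homomorphic) averaging operator $t$, I would check that $A^{(t)}$ satisfies the identities $S^{(2)}(\mathfrak M)$ resp.\ $S^{(3)}(\mathfrak M)$. The identities \eqref{eq:Zero-Identities} (and their di-analogues) hold because applying $f^H$ with $i\notin H$ forces a $t$ on the $i$th slot, wiping out any decoration coming from a nested $g^S$; this is exactly the condition in the definition of a (homomorphic) averaging operator. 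The relations $\Phi^H=0$ for $\Phi\in S(\mathfrak M)$ should follow by unfolding $\Phi^H$ via the recursive formula $u^H=f^K(v_1^{H_1},\dots,v_n^{H_n})$ and using the averaging identity repeatedly to rewrite every $\Phi^H$-evaluation in $A^{(t)}$ as a single application of $t$ (or a $t$-decorated version) of the corresponding $\Phi$-evaluation in $A$, which vanishes since $A\in\mathfrak M$. An induction on monomial length makes this precise, mirroring the proof of Lemma~\ref{lem:Tilde_in_Var}.

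For parts (3) and (4), the plan is to build the pair $(A,t)$ out of the construction $\widetilde T=\bar T\oplus T$ already used in Theorem~\ref{thm:EmbeddingIntoCurrent}. Recall that there $T$ embeds into $C_2\otimes\widetilde T$ via $a\mapsto e_1\otimes\bar a+e_2\otimes a$. Now on $C_2\otimes\widetilde T$ I would define $t$ by $t(e_i\otimes v)=e_1\otimes v$, i.e.\ $t=(\pi\otimes\id)$ where $\pi:C_2\to C_2$ is the linear map $e_1,e_2\mapsto e_1$. The key computations are: (i) $\pi$ is a homomorphic averaging operator on $C_2$ (for part 4) — one checks $x\vdash y$, $x\perp y$ with the explicit products of Example~\ref{exmp:C_2} that $f(\pi x_1,\dots,\pi x_n)=\pi f(x_1^H,\dots,x_n^H)$, noting that $\pi$ lands in the span of $e_1$ on which $e_1\perp e_1=e_1=e_1\vdash e_1$; (ii) consequently $t=\pi\otimes\id$ is a homomorphic averaging operator on $C_2\otimes\widetilde T$, because the operations on a tensor product $C\otimes A$ are performed componentwise and $\id$ is trivially compatible; (iii) under the embedding $a\mapsto e_1\otimes\bar a+e_2\otimes a$, the $\Sigma^{(3)}$-operations $f^H$ on $T$ match $f(a_1^H,\dots,a_n^H)$ computed in $(C_2\otimes\widetilde T)^{(t)}$ — this is essentially the displayed computation at the end of the proof of Theorem~\ref{thm:EmbeddingIntoCurrent}, now read as the definition of $A^{(t)}$. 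For part (3), I would instead use $P_1\subset\Perm$, the one-dimensional operad, or equivalently restrict $\pi$ and the decorations to singletons; the $\di\mathfrak M$ version of the construction from \cite{KolVoronin2013} plays the role that Theorem~\ref{thm:EmbeddingIntoCurrent} plays in the tri-case, and the map $\pi$ on the relevant two-dimensional $\Perm$-algebra is checked to be an ordinary averaging operator.

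The main obstacle is item (i) above: verifying that the projection $\pi$ onto the "$\vdash$-unital-like" element is genuinely a homomorphic averaging operator, and more generally pinning down the correct ambient $\Perm$- or $\ComTrias$-algebra with its operator so that the already-proven embedding $T\hookrightarrow C_2\otimes\widetilde T$ becomes an embedding $T\hookrightarrow A^{(t)}$ of $\Sigma^{(3)}$-algebras. Once the operator is identified, everything else is bookkeeping: functoriality of $\widetilde{(-)}$ (Lemma~\ref{lem:Functorial_Tilde}) and the fact that Hadamard/tensor operations act slotwise reduce the remaining checks to the scalar computations in Examples~\ref{exmp:C_2} and the $\Perm$ analogue. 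A secondary point to handle carefully is the hypothesis $\nu(f)\ge 2$: it is needed (as in Theorem~\ref{thm:EmbeddingIntoCurrent}) to ensure $\widetilde T$ is well defined, and Remark~\ref{rem:UnitaryOps} indicates how to extend the statement to languages with unary operators that are endomorphisms or derivations. I would conclude by noting that parts (1)--(2) together with (3)--(4) show $A^{(t)}\mapsto$ variety membership and variety membership $\mapsto$ some $A^{(t)}$, so the averaging-operator description is equivalent to Definition~\ref{defn:Di-Tri-Algebras}, exactly as claimed.
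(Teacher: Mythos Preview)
Your treatment of parts (1) and (2) is correct and matches the paper: one shows by induction on the length of $\Phi$ that $\Phi^H(a_1,\dots,a_m)=\Phi(a_1^H,\dots,a_m^H)$ in $A$, and the identities \eqref{eq:Zero-Identities} follow straight from the defining property of~$t$.

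Your plan for (3) and (4), however, has a genuine gap --- in fact exactly at the point you flag as the ``main obstacle''. First, a type mismatch: the theorem asks for $A\in\mathfrak M$, i.e.\ a $\Sigma$-algebra, but $C_2\otimes\widetilde T$ is by construction a $\Sigma^{(3)}$-algebra in $\tri\mathfrak M$, not a $\Sigma$-algebra in $\mathfrak M$; so it cannot play the role of $A$, and an ``averaging operator'' on it does not produce a $\tri\mathfrak M$-structure (it would produce a $\tri\tri\mathfrak M$-structure). Second, even on its own terms the check in (i) fails: the projection $\pi:e_i\mapsto e_1$ is \emph{not} an averaging operator on $C_2$. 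Take $x=e_2$, $y=e_1$ and the operation~$\vdash$: then $\pi(x)\vdash\pi(y)=e_1\vdash e_1=e_1$, while $\pi(x\vdash\pi(y))=\pi(e_2\vdash e_1)=\pi(0)=0$, since in Example~\ref{exmp:C_2} the product $e_2\vdash e_1$ is zero. So item (i) cannot be verified.

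The paper sidesteps both issues by a simpler choice: take $A=\widetilde T$ itself (this \emph{is} an $\mathfrak M$-algebra by Lemma~\ref{lem:Tilde_in_Var}) and let $t$ be the projection onto the $\bar T$-summand, $t(a)=\bar a$ and $t(\bar a)=\bar a$ for $a\in T$. That $t$ is a homomorphic averaging operator is immediate from the formula \eqref{eq:HatConstruction}, and the inclusion $T\subset\widetilde T$ then becomes a $\Sigma^{(3)}$-embedding $T\hookrightarrow\widetilde T^{(t)}$. No auxiliary $\ComTrias$-algebra is needed at this stage; the embedding into $C_2\otimes\widetilde T$ from Theorem~\ref{thm:EmbeddingIntoCurrent} is used only to know that $\widetilde T\in\mathfrak M$, not as the ambient $A$.
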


\begin{proof}
Let us show (2) and (4) since (1) and (3) are in fact restrictions of the statements on tri-algebras.

To prove (2), it is enough to note (by induction on $m$) that for every
\[
 \Phi=\Phi(x_1,\dots, x_m)\in \mathfrak F(m)
\]
and for every $H\in \mathcal P(m)$
we have
\[
 \Phi^H(a_1,\dots, a_m) = \Phi(a_1^H,\dots , a_m^H), \quad a_i\in A.
\]
Moreover,
\eqref{eq:Zero-Identities} also hold on $A^{(t)}$ by definition of~$t$.

Statement (4) follows from Theorem \ref{thm:EmbeddingIntoCurrent}:
$T$ is a subalgebra of $C_2\otimes \widetilde T$. Consider $A=\widetilde T=\bar T\oplus T$
and define
\[
 ta = \bar a, \quad t\bar a = \bar a, \quad a\in T.
\]
It is easy to see by definition of operations on $\widetilde T$ that
$t$ is indeed a homomorphic averaging operator on $A$, and
$T\subseteq A^{(t)}$ is a $\Sigma^{(3)}$-subalgebra.
\end{proof}

\section{Splitted algebras}\label{sec:Splitting}

In this section, we observe an approach to the procedure of splitting of an operad \cite{BaiGuoNi2012}
that leads to classes of objects in some sense  dual to di- and tri-algebras.

\subsection{Definition and examples}

As above, let $\mathfrak M$ be a variety of $\Sigma $-algebras defined by a family
of polylinear identities $S(\mathfrak M)$.

Suppose $T$ is a $\Sigma^{(3)}$-algebra, and let $C\in \ComTrias $.
Define the following $\Sigma $-algebra structure on the space $C\otimes T$:
\begin{gather}\label{eq:ComTriasOtimes}
  f(a_1\otimes u_1, \dots , a_n\otimes u_n)
   =
  \sum\limits_{H\in \mathcal P(n)}
  e^{(n)}_{H} (a_1,\dots, a_n) \otimes f^{H}(u_1,\dots, u_n), \\
  f\in \Sigma , \ \nu(f)=n.
\end{gather}
Denote the $\Sigma$-algebra obtained by $C\boxtimes T$.

In a similar way (considering only singletons in \eqref{eq:ComTriasOtimes}) one may
define $P\boxtimes D$ for a $\Sigma^{(2)}$-algebra $D$ and $P\in \Perm $.

\begin{definition}\label{defn:postAlg_via_ComTrias}
A class of $\Sigma^{(2)}$-algebras $D$ such that
$P\boxtimes D\in \mathfrak M $ for all $P\in \Perm $
is denoted by $\pre\mathfrak M$.

A class of $\Sigma^{(3)}$-algebras $T$ such that
$C\boxtimes T\in \mathfrak M $  for all $C\in \ComTrias$
is denoted by $\post\mathfrak M$.
\end{definition}

It  is enough to check $P\boxtimes D$ and $C\boxtimes T$
for free algebras $P=\Perm\langle X \rangle $ and $C=\ComTrias\langle X\rangle $,
where $X=\{x_1,x_2,\dots \}$ is a countable set of symbols.

It is obvious that $\pre\mathfrak M$ and $\post\mathfrak M$ are varieties of
$\Sigma^{(2)}$- and $\Sigma^{(3)}$-algebras, respectively. Indeed, it is easy
to find their defining identities by the very definition.

\begin{example}\label{exmp:LSA}
 Suppose $\Sigma $ consists of one binary operation $[\cdot,\cdot]$, and let $\mathfrak M = \Lie $.
Then $\Sigma^{(2)}$ consists of two operations, say,
$[\cdot \vdash \cdot]$ and $[\cdot \dashv \cdot]$. A $\Sigma^{(2)}$-algebra $D$ belongs to $\pre\Lie $
if and only if
$\Perm\langle X\rangle \boxtimes D \in \Lie$, i.e.,
\[
 [(x_1\otimes a_1),(x_2\otimes a_2)] = x_1x_2 \otimes [a_1\vdash a_2] + x_2x_1\otimes [a_1\dashv a_2]
\]
is anti-commutative and satisfies the Jacobi identity. The anti-commutativity implies
\[
 [a_1\vdash a_2] =- [a_2\dashv a_1],  \quad a_1,a_2\in D.
\]
Denote
$[a\vdash b]$ by $ab$.
Let us check the Jacobi identity:
\begin{multline}
 [[(x_1\otimes a_1),(x_2\otimes a_2)],(x_3\otimes a_3)]   \\
%= [x_1x_2\otimes a_1a_2 - x_2x_1\otimes a_2a_1, x_3\otimes a_3]
= x_1x_2x_3 \otimes (a_1a_2)a_3 - x_3x_1x_2 \otimes a_3(a_1a_2)
 - x_2x_1x_3\otimes (a_2a_1)a_3  + x_3x_2x_1 \otimes a_3(a_2a_1) \\
=e_3^{(3)}\otimes ((a_1a_2)a_3 - (a_2a_1)a_3 )
 -  e_2^{(3)} \otimes a_3(a_1a_2) + e_1^{(3)} \otimes a_3(a_2a_1).
\end{multline}
Hence,
\begin{multline}
 [[(x_1\otimes a_1),(x_2\otimes a_2)],(x_3\otimes a_3)]
+ [[(x_2\otimes a_2),(x_3\otimes a_3)],(x_1\otimes a_1)]  \\
+ [[(x_3\otimes a_3),(x_1\otimes a_1)],(x_2\otimes a_2)]
=
% e_3^{(3)}\otimes ((a_1a_2)a_3 - (a_2a_1)a_3 )  -  e_2^{(3)} \otimes a_3(a_1a_2) + e_1^{(3)} \otimes a_3(a_2a_1) \\
% + e_1^{(3)}\otimes ((a_2a_3)a_1 - (a_3a_2)a_1 ) -  e_3^{(3)} \otimes a_1(a_2a_3) + e_2^{(3)} \otimes a_1(a_3a_2) \\
% + e_2^{(3)}\otimes ((a_3a_1)a_2 - (a_1a_3)a_2 ) -  e_1^{(3)} \otimes a_2(a_3a_1) + e_3^{(3)} \otimes a_2(a_1a_3) \\
%=
e_1^{(3)}(a_3(a_2a_1) - (a_3a_2)a_1 + (a_2a_3)a_1  - a_2(a_3a_1))  \\
+
e_2^{(3)}((a_3a_1)a_2 -a_3(a_1a_2) + a_1(a_3a_2)  - (a_1a_3)a_2) \\
+
e_3^{(3)}((a_1a_2)a_3 - a_1(a_2a_3) + a_2(a_1a_3) - (a_2a_1)a_3  ).
\end{multline}
Hence, $D\in \pre\Lie$ if and only if the product $ab$ is left-symmetric.
\end{example}

Other well-known examples include pre-associative (dendriform) \cite{Loday01},
post-associa\-tive (tridendriform) \cite{TriAndPostAs}, pre-Poisson \cite{Aguiar2000},
pre-Jordan \cite{Pre-Jordan} algebras, as well as pre-Lie triple systems \cite{Pre-LieTriple}.

\subsection{Equivalent description}

Suppose $T$ is a $\Sigma^{(3)}$-algebra.
Denote by $\widehat T$ the direct sum of two isomorphic copies
of $T$ as of linear space:
\[
\widehat T=T\oplus T'.
\]
Assume the isomorphism is given by the correspondence
$a\leftrightarrow a'$, $a\in T$, and define
\begin{equation}\label{eq:HatOperations}
 f(a_1+b'_1, \dots , a_n+b'_n) =
\sum\limits_{H\in \mathcal P(n)} f^H(a_1,\dots, a_n)
+
 \left (\sum\limits_{H\in \mathcal P(n)}
  f^{H} (c^H_1,\dots, c^H_n) \right )',
\end{equation}
where $f\in \Sigma $, $\nu(f)=n$, and
\[
 c^H_i= \begin{cases}
   a_i, & i\notin H, \\
   b_i, & i\in H.
 \end{cases}
\]
Thus, $\widehat T$ carries the structure of a $\Sigma $-algebra.
For a $\Sigma^{(2)}$-algebra $D$, one may define $\widehat D$ in a similar
way assuming $f^H(x_1,\dots, x_n) = 0$ for $|H|>1$.

\begin{theorem}[c.f. \cite{GubKol2013}]\label{lem:Splitting_vs_RB}
The following statements are equivalent:
\begin{enumerate}
 \item $T\in \post\mathfrak M$;
 \item $\widehat T\in \mathfrak M $.
\end{enumerate}
\end{theorem}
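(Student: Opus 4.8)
The plan is to establish the equivalence through the ``doubling'' algebra $\widehat{T} = T \oplus T'$ by identifying it, up to isomorphism, with an algebra of the form $C \boxtimes T$ for a suitable two-dimensional $C \in \ComTrias$. The natural candidate is the algebra $C_2$ from Example~\ref{exmp:C_2}, reversing the roles of its two basis elements relative to the embedding used in Theorem~\ref{thm:EmbeddingIntoCurrent}. Concretely, I would set up the linear isomorphism $\widehat{T} \to C_2 \boxtimes T$ given by $a + b' \mapsto e_2 \otimes a + e_1 \otimes b$ (the primed copy corresponding to $e_1$, the unprimed copy to $e_2$), and check that the $\Sigma$-algebra operations \eqref{eq:HatOperations} match the operations \eqref{eq:ComTriasOtimes} under this identification.

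The main computation is then the following: for $f \in \Sigma$ with $\nu(f) = n$, one must expand
\[
 f(e_2 \otimes a_1 + e_1 \otimes b_1,\ \dots,\ e_2 \otimes a_n + e_1 \otimes b_n)
 = \sum_{H \in \mathcal{P}(n)} e^{(n)}_H(d_1, \dots, d_n) \otimes f^H(u_1, \dots, u_n),
\]
where each argument $d_i \otimes u_i$ ranges over the two summands $e_2 \otimes a_i$ and $e_1 \otimes b_i$, and one sums over all $2^n$ choices as well as over all $H$. Using the description of $C_2$'s products, $e^{(n)}_H(v_1, \dots, v_n)$ (with each $v_k \in \{e_1, e_2\}$) is nonzero only when all $v_k$ with $k \in H$ equal $e_1$ and, moreover, equals $e_1$ if \emph{every} $v_k = e_1$ and equals $e_2$ if \emph{at least one} $v_k = e_2$ (consistent with $e_1 \vdash e_1 = e_1$, $e_1 \vdash e_2 = e_2$, $e_i \perp e_i = e_i$). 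One checks that the $e_1$-component collects exactly $\sum_{H} f^H(c^H_1, \dots, c^H_n)$ with $c^H_i$ as in \eqref{eq:HatOperations} — the choice of which arguments are ``primed'' is forced to be those in $H$ — while the $e_2$-component collects $\sum_H f^H(a_1, \dots, a_n)$, matching \eqref{eq:HatOperations} precisely. (This uses crucially that $\nu(f) \ge 2$ is not even needed here, unlike in Theorem~\ref{thm:EmbeddingIntoCurrent}, since no separate ``bar'' quotient is involved.) Hence $\widehat{T} \cong C_2 \boxtimes T$ as $\Sigma$-algebras.

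Granting this isomorphism, the implication (1) $\Rightarrow$ (2) is immediate: if $T \in \post\mathfrak{M}$ then $C \boxtimes T \in \mathfrak{M}$ for all $C \in \ComTrias$, in particular $C_2 \boxtimes T \cong \widehat{T} \in \mathfrak{M}$. For the converse (2) $\Rightarrow$ (1), I would exhibit, for an arbitrary $C \in \ComTrias$, a homomorphism of $\Sigma$-algebras from $C \boxtimes T$ into a (sum of, or cartesian power of) copies of $\widehat{T}$, or more directly embed $C \boxtimes T$ into $\widehat{T}^{\,m}$ for suitable $m$ via $\ComTrias$-morphisms $C \to C_2^{\,m}$; since it suffices by the remark after Definition~\ref{defn:postAlg_via_ComTrias} to treat $C = \ComTrias\langle X \rangle$, and since any $\ComTrias$-algebra maps into a product of copies of $C_2$ (one should verify this separating property of $C_2$ inside $\ComTrias$, analogous to how $C_2$ controls the decoration in Theorem~\ref{thm:EmbeddingIntoCurrent}), functoriality of $\boxtimes$ in the first argument then forces $C \boxtimes T \in \mathfrak{M}$ whenever $\widehat{T} \cong C_2 \boxtimes T \in \mathfrak{M}$, because $\mathfrak{M}$ is closed under subalgebras and products.

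The step I expect to be the genuine obstacle is the separating property of $C_2$: showing that the canonical map $\ComTrias\langle X \rangle \to \prod C_2$ (over enough homomorphisms to $C_2$) is injective, equivalently that a nonzero element $\sum \lambda_H e^{(n)}_H$ of a $\ComTrias$-algebra survives under some evaluation into $C_2$. This is really a statement about $\ComTrias$ as an operad — that the identities valid on $C_2$ generate the whole $\ComTrias$ variety, or at least that $C_2$ ``detects'' the decoration sets $H$ — and it parallels, but is logically independent of, the corresponding fact implicitly used in Theorem~\ref{thm:EmbeddingIntoCurrent} (where $C_2$ detects $K = H$). If a clean direct argument is unavailable, the fallback is to prove (2) $\Rightarrow$ (1) by a dimension/identity-matching argument: compute defining identities of $\post\mathfrak{M}$ directly from Definition~\ref{defn:postAlg_via_ComTrias} and verify these all hold in any $T$ with $\widehat{T} \in \mathfrak{M}$, by the same inductive-on-monomial-length scheme used in Lemma~\ref{lem:Tilde_in_Var}.
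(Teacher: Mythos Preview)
Your strategy is correct and close to the paper's, but there is a labeling slip and the ``obstacle'' you flag is in fact easy. The isomorphism $\widehat T\cong C_2\boxtimes T$ should send $a+b'\mapsto e_1\otimes a + e_2\otimes b$, not the reverse: in $C_2$ one has $e^{(n)}_H(v_1,\dots,v_n)\ne 0$ precisely when $v_j=e_1$ for every $j\notin H$ and all $v_i$ with $i\in H$ are equal (this is the computation carried out in the proof of Theorem~\ref{thm:EmbeddingIntoCurrent}); the result is $e_1$ if all $v_k=e_1$ and $e_2$ if the $v_i$ with $i\in H$ equal $e_2$. With the corrected map the $e_1$-component of $f(u_1,\dots,u_n)$ is $\sum_H f^H(a_1,\dots,a_n)$ and the $e_2$-component is $\sum_H f^H(c_1^H,\dots,c_n^H)$, matching \eqref{eq:HatOperations}. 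Your stated conditions (``all $v_k$ with $k\in H$ equal $e_1$'') and your assignment of components are both off, which is why your check did not close.

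For $(2)\Rightarrow(1)$ the paper takes your ``fallback'': it proves directly, by induction on term length (Lemma~\ref{lem:EquivInductionTerms}), that $\Phi_{(H)}(a_1,\dots,a_n)'=\Phi(d_1^H,\dots,d_n^H)$ in $\widehat T$, so each coefficient $\Phi_{(H)}$ vanishes whenever $\Phi\in\Id(\mathfrak M)$ and $\widehat T\in\mathfrak M$. Your primary route via the separating property of $C_2$ is a legitimate alternative and is not hard: for each $K\in\mathcal P(n)$ the $\ComTrias$-homomorphism $\phi_K:\ComTrias\langle y_1,\dots,y_n\rangle\to C_2$ with $y_i\mapsto e_2$ for $i\in K$ and $y_i\mapsto e_1$ otherwise sends $e^{(n)}_H(y_1,\dots,y_n)$ to $e_2$ if $H=K$ and to $0$ otherwise, so these maps jointly separate the basis. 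Functoriality of $C\mapsto C\boxtimes T$ then embeds $\ComTrias\langle Y\rangle\boxtimes T$ into a product of copies of $\widehat T$, and since varieties are closed under products and subalgebras you are done. Note that $\phi_K\boxtimes\id$ applied to $\Phi(y_1\otimes a_1,\dots,y_n\otimes a_n)$ is exactly $\Phi(d_1^K,\dots,d_n^K)$, so the two approaches are really the same computation viewed from different ends.
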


Similarly, a $\Sigma^{(2)}$-algebra $D$ belongs to $\pre\mathfrak M$ if and only if $\widehat D\in \mathfrak M$.

\begin{proof}
Let us fix $C=\ComTrias\langle Y \rangle $, $Y$ is an infinite set. It is enough to prove
that (2) is equivalent to $C\boxtimes T\in \mathfrak M$.

Suppose $\Phi=\Phi (x_1, \dots, x_n) \in \mathfrak F(n)$ is a polylinear term of degree $n$
in the language~$\Sigma $.
Evaluate the term $\Phi $ in
$C\boxtimes T$:
\[
 \Phi(y_1\otimes a_1, \dots, y_n\otimes a_n) =
 \sum\limits_{H\in \mathcal P(n)}
  e^{(n)}_{H} (y_1,\dots, y_n) \otimes \Phi_{(H)}(a_1,\dots, a_n).
\]
This equation defines a family of $n$-linear functions
$\Phi_{(H)}:T^{\otimes n} \to T$, $H\in \mathcal P(n)$.

\begin{lemma}\label{lem:EquivInductionTerms}
In the algebra $\widehat T$, the following equations hold for $a_i\in T\subset \widehat T$
($i=1,\dots,n$):
\begin{gather}
\Phi_{(H)} (a_1,\dots, a_n)' = \Phi (d_1^H, \dots, d_n^H),
                         \label{eq:TermalCorrespondence} \\
 \sum\limits_{H\in \mathcal P(n)} \Phi_{(H)} (a_1,\dots, a_n) = \Phi (a_1,\dots, a_n).
                         \label{eq:TermalCorrespondence-2}
\end{gather}
where
\[
 d_i^H = \begin{cases}
          a_i', & i\in H, \\
          a_i, & i\notin H.
         \end{cases}
\]
\end{lemma}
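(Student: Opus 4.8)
The plan is to prove Lemma~\ref{lem:EquivInductionTerms} by induction on the degree $n$ of the polylinear term $\Phi$, and then deduce Theorem~\ref{lem:Splitting_vs_RB} from it. The base case $n=1$ is trivial: for $\Phi = x_1$ the only nonempty subset of $\{1\}$ is $H=\{1\}$, and $\Phi_{(\{1\})}=\id$, so both \eqref{eq:TermalCorrespondence} and \eqref{eq:TermalCorrespondence-2} hold by definition of $\widehat T$ and of the correspondence $a\leftrightarrow a'$.

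For the inductive step, write $\Phi = f(\Psi_1,\dots,\Psi_k)$ with $f\in\Sigma$, $\nu(f)=k$, and $\Psi_j$ polylinear terms in disjoint sets of variables whose union is $\{x_1,\dots,x_n\}$; say $\Psi_j$ involves the variables indexed by a block $B_j$, so $\{1,\dots,n\}=B_1\sqcup\dots\sqcup B_k$. First I would compute $\Phi(y_1\otimes a_1,\dots,y_n\otimes a_n)$ in $C\boxtimes T$ by substituting the inductively known expansions of each $\Psi_j(\,\cdot\,)$ into the defining rule \eqref{eq:ComTriasOtimes} for $f$; this expresses $\Phi_{(H)}$ in terms of the $(\Psi_j)_{(H_j)}$ together with the composition-of-sets operation in $\ComTrias$. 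The key combinatorial input here is exactly the compatibility recorded in Lemma~\ref{lem:ComTriasCompSums}: the constrained sum over tuples $(K,H_1,\dots,H_k)$ with $K(H_1,\dots,H_k)=H$ may be replaced by an unconstrained product of sums, which is what lets the expansion organize cleanly. Concretely one gets $\Phi_{(H)}(a_1,\dots,a_n) = f^{K}\bigl((\Psi_1)_{(H_1)}(\,\cdot\,),\dots,(\Psi_k)_{(H_k)}(\,\cdot\,)\bigr)$ where $K=\{j : H\cap B_j\neq\varnothing\}$ and $H_j$ is $H\cap B_j$ (suitably re-indexed) when $j\in K$ and $\{1\}$ otherwise, matching the canonical-preimage recipe $u^H = f^K(v_1^{H_1},\dots,v_n^{H_n})$ from the construction preceding Theorem~\ref{thm:EmbeddingIntoCurrent}.

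Next I would verify \eqref{eq:TermalCorrespondence} and \eqref{eq:TermalCorrespondence-2} in $\widehat T$ using the defining operations \eqref{eq:HatOperations}. For \eqref{eq:TermalCorrespondence-2}, sum $\Phi_{(H)}$ over all $H\in\mathcal P(n)$: by the product decomposition just obtained this sum factors, over the choice of $K$ and then over the $H_j$, into an expression that the induction hypothesis identifies with $\sum_H f^H\bigl(\sum_{H_1}(\Psi_1)_{(H_1)},\dots\bigr)$ evaluated at the $a_i$; comparing with the $T$-component of \eqref{eq:HatOperations} applied to $\Phi(a_1,\dots,a_n)=f(\Psi_1(\,\cdot\,),\dots,\Psi_k(\,\cdot\,))$ gives the claim. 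For \eqref{eq:TermalCorrespondence}, apply $f$ in $\widehat T$ to the arguments $\Psi_j(d^H_\bullet)$: by the inductive form \eqref{eq:TermalCorrespondence} each $\Psi_j(d^{H}_\bullet)$ equals $(\Psi_j)_{(H\cap B_j)}(a_\bullet)'$ when $H$ meets $B_j$, i.e.\ lies in the primed copy $T'$, and when $H$ misses $B_j$ one invokes the inductive \eqref{eq:TermalCorrespondence-2} to see $\Psi_j(d^{H}_\bullet) = \sum_{H_j}(\Psi_j)_{(H_j)}(a_\bullet)$ sits in the unprimed copy $T$; feeding this into the primed component of \eqref{eq:HatOperations} (whose $H$-sum picks out exactly the primed inputs, i.e.\ the block $K$) reproduces $\Phi_{(H)}(a_\bullet)'$.

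With the lemma in hand, the theorem is immediate. If $\widehat T\in\mathfrak M$, then for every $\Phi\in S(\mathfrak M)$ and all $a_i\in T$, equation \eqref{eq:TermalCorrespondence} gives $\Phi_{(H)}(a_1,\dots,a_n)'=\Phi(d^H_1,\dots,d^H_n)=0$ in $\widehat T$, hence $\Phi_{(H)}(a_1,\dots,a_n)=0$ in $T$ for every $H$, so $\Phi$ vanishes identically on $C\boxtimes T$ for $C=\ComTrias\langle Y\rangle$, whence $T\in\post\mathfrak M$. Conversely, if $T\in\post\mathfrak M$ then all $\Phi_{(H)}$ vanish, so by \eqref{eq:TermalCorrespondence} the primed component and by \eqref{eq:TermalCorrespondence-2} the unprimed component of $\Phi$ evaluated on $T\subset\widehat T$ both vanish; since $\widehat T$ is spanned over $T$ and $T'$ and every element is a sum $a+b'$, polylinearity reduces the general evaluation of $\Phi$ to these cases and shows $\widehat T\in\mathfrak M$. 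The $\pre$-case is the same argument restricted to singleton $H$. I expect the main obstacle to be purely bookkeeping: getting the index sets $K$, $H_j$ and the re-indexing of blocks $B_j$ to line up exactly between the $C\boxtimes T$ computation, the canonical-preimage formula $u^H=f^K(v_1^{H_1},\dots)$, and the two components of \eqref{eq:HatOperations} — the conceptual content is entirely contained in Lemma~\ref{lem:ComTriasCompSums}.
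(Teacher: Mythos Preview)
Your overall strategy is the paper's: induct on the structure of $\Phi=f(\Psi_1,\dots,\Psi_k)$, expand in $C\boxtimes T$, and compare with \eqref{eq:HatOperations}. There is, however, one genuine slip. You assert that
\[
\Phi_{(H)}=f^{K}\bigl((\Psi_1)_{(H_1)},\dots,(\Psi_k)_{(H_k)}\bigr)
\]
with $H_j=\{1\}$ when $j\notin K$, and justify it by ``matching the canonical-preimage recipe $u^H=f^K(v_1^{H_1},\dots)$'' from Section~\ref{sec:Replication}. That recipe constructs $\Phi^H$ for \emph{tri}-algebras, not $\Phi_{(H)}$ for post-algebras; the two are different objects. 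The correct expression is the paper's \eqref{eq:T_Hexpression}: a \emph{sum} over all $(K,H_1,\dots,H_k)$ with $K(H_1,\dots,H_k)=H$. Fixing $H$ determines $K$ and the $H_j$ for $j\in K$, but the $H_j$ for $j\notin K$ range freely over $\mathcal P(n_j)$. You tacitly use the right formula afterward --- you correctly invoke $\sum_{H_j}(\Psi_j)_{(H_j)}$ for $j\notin K$ via the inductive \eqref{eq:TermalCorrespondence-2} --- so the argument ultimately goes through, but the displayed formula and its stated justification are incorrect and should be fixed.

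For the implication $(1)\Rightarrow(2)$ of the theorem, the paper is much simpler than your polylinearity reduction: it just observes that $\widehat T=C_2\boxtimes T$ (with $C_2$ from Example~\ref{exmp:C_2}), so $T\in\post\mathfrak M$ gives $\widehat T\in\mathfrak M$ immediately from Definition~\ref{defn:postAlg_via_ComTrias}. Your argument works too, but you are re-proving by hand a special case of what the definition already grants.
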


\begin{proof}
If $n=1$ then
\eqref{eq:TermalCorrespondence} is trivial.
Proceed by induction on $n$. Assume
\[
 \Phi = f(\Psi_1, \dots, \Psi_m), \quad f\in \Sigma, \ \nu(f)=m,
\]
where $\Psi_i\in \mathfrak F(n_i)$,
$n_1+\dots + n_m = n$.
Suppose
$z_{ij}\in Y$ are pairwise different,
$a_{ij}\in T$,
$i=1,\dots , m$, $j=1,\dots, n_i$.
To simplify notations, denote
\[
 \bar z_i = (z_{i1}, \dots, z_{in_i}), \quad
 \bar a_i = (a_{i1}, \dots, a_{in_i}), \quad i=1,\dots, m.
\]
For $H_i\in \mathcal P(n_i)$,
denote by $\bar a_i^{H_i}$ the $n_i$-tuple
$(d_{i1}^{H_i}, \dots, d_{in_i}^{H_i})$
obtained from the initial one by ``adding primes'' to all those components that belong
to $H_i$.

Then
\begin{multline}\nonumber
 f(z_{11}\otimes a_{11},\dots, z_{mn_m}\otimes a_{mn_m}) \\
 =
 \sum\limits_{\begin{subarray}{c}
               K\in \mathcal P(m)\\
              H_1\in \mathcal P(n_1) \\
                  \dots \\
           H_m\in \mathcal P(n_m)
\end{subarray}}
 e^{(m)}_K (e^{(n_1)}_{H_1}(\bar z_1), \dots,   e^{(n_m)}_{H_m}(\bar z_m)  )
 \otimes
 f^K(\Psi_{1(H_1)}(\bar a_1), \dots , \Psi_{m(H_m)}(\bar a_m)  ) \\
 =
 \sum\limits_{\begin{subarray}{c}
               K\in \mathcal P(m)\\
              H_1\in \mathcal P(n_1) \\
                  \dots \\
           H_m\in \mathcal P(n_m)
\end{subarray}}
 e^{(n)}_{K(H_1,\dots, H_m)}(z_{11}, \dots , z_{mn_m})
 \otimes
 f^K(\Psi_{1(H_1)}(\bar a_1), \dots , \Psi_{m(H_m)}(\bar a_m)  ),
\end{multline}
where
$K(H_1,\dots, H_m)$ is the composition of sets from the definition of $\ComTrias $.

Hence, for every $H\in \mathcal P(n)$ we have
\begin{equation}\label{eq:T_Hexpression}
 \Phi_{(H)}(a_{11}, \dots , a_{mn_m})
 =\sum\limits_{\begin{subarray}{c} K, H_1,\dots, H_m \\ K(H_1,\dots, H_m)=H \end{subarray}}
 f^K(\Psi_{1(H_1)}(\bar a_1), \dots , \Psi_{m(H_m)}(\bar a_m)  ).
\end{equation}
By definition, every $H$ uniquely determines $K$ and $H_i$ for $i\in K$. Other $H_j$ (for $j\notin K$)
in \eqref{eq:T_Hexpression} run through the entire $\mathcal P(n_j)$.
Therefore,
\[
\Phi_{(H)}(a_{11}, \dots , a_{mn_m})'
 =
f^K(b_1, \dots, b_m)',
\quad
 b_i = \begin{cases}
 \Psi_{i(H_i)}(\bar a_i)', & i\in K, \\
 \sum\limits_{H_i\in \mathcal P(n_i)} \Psi_{i(H_i)}(\bar a_i)  , & i\notin K.
 \end{cases}
\]
By the inductive assumption,
\[
 \Psi_{i(H_i)}(\bar a_i)' = \Psi_i(\bar a_i^{H_i}),
\quad
 \sum\limits_{H_i\in \mathcal P(n_i)} \Psi_{i(H_i)}(\bar a_i)   = \Psi_i(\bar a_i).
\]
It remains to apply the definition of operations in $\widehat D$ \eqref{eq:HatOperations}
to prove \eqref{eq:TermalCorrespondence}.

To complete the proof,
apply \eqref{eq:T_Hexpression} and Lemma~\ref{lem:ComTriasCompSums}:
\begin{multline}\nonumber
 \sum\limits_{H\in \mathcal P(n)}\Phi_{(H)}(a_{11}, \dots , a_{mn_m})  \\
 =\sum\limits_{H\in \mathcal P(n)}\sum\limits_{\begin{subarray}{c} K, H_1,\dots, H_m \\ K(H_1,\dots, H_m)=H \end{subarray}}
 f^K(\Psi_{1(H_1)}(\bar a_1), \dots , \Psi_{m(H_m)}(\bar a_m)  )  \\
=
\sum\limits_{K\in \mathcal P(m)}
\sum\limits_{H_1\in \mathcal P(n_1)}\dots \sum\limits_{H_m\in \mathcal P(n_m)}
f^K(\Psi_{1(H_1)}(\bar a_1), \dots , \Psi_{m(H_m)}(\bar a_m)  ).
\end{multline}
Now \eqref{eq:TermalCorrespondence-2} follows from polylinearity of $f^K$ and inductive assumption.
\end{proof}

Let us finish the proof of the theorem.
If
$\widehat T\in \mathfrak M $ then  $\ComTrias(Y)\boxtimes T$
satisfies all defining identities of the variety $\mathfrak M $
by Lemma~\ref{lem:EquivInductionTerms}.

The converse is even simpler. Note that
$ \widehat T = C_2\boxtimes T$,
where $C_2$ is the 2-dimensional $\ComTrias$-algebra from Example~\ref{exmp:C_2}.
By the very definition, $\widehat T\in \mathfrak M$.
\end{proof}

\begin{remark}
 Note that the base field itself is a 1-dimensional algebra in $\ComTrias$. Therefore,
 if $A\in \pre\mathfrak M$ or $A\in \post\mathfrak M$ then
 $\Bbbk \boxtimes A \in \mathfrak M$. This observation explains the term ``splitting'':
 An operation $f\in \Sigma $, $\nu(f)=n$, splits into $n$ or $2^n-1$
 operations, $f = \sum\limits_H f^H$.
\end{remark}

\subsection{Rota-Baxter operators}
Let $A$ be a $\Sigma$-algebra.

\begin{definition}[c.f. \cite{BaiGuoNi2012}]
 A linear map $\tau : A\to A$ is said to be a Rota---Baxter operator of weight $\lambda $ ($\lambda \in \Bbbk $)
if
\begin{gather}
f(\tau(a_1),\dots, \tau (a_n)) = \sum\limits_{H\in \mathcal P(n)} \lambda^{|H|-1}\tau(f(a_1^H, \dots, a_n^H)),
  \label{eq:Rota-BaxterGeneral}  \\
a_i^H = \begin{cases}
               a_i, & i\in H, \\ \tau(a_i), & i\notin H,
              \end{cases} \label{eq:a^H_iDefn}
\end{gather}
for all $f\in \Sigma $, $\nu(f)=n$, $a_i\in A$.
\end{definition}

Obviously, if $\tau $ is a Rota---Baxter operator of nonzero weight $\lambda $ then
$\tau' = \frac{1}{\lambda }\tau $ is a Rota---Baxter operator of weight 1. Hence,
there are two essentially different cases: $\lambda = 0$ (zero weight) and $\lambda =1$ (unit weight).

The following statement was proved in \cite{GubKol2013} in the case of binary operations ($\nu(f)=2$).
By means of the approach presented in this paper, the proof becomes clear in the general case.

Given a  $\Sigma $-algebra $A$ equipped with
 a Rota---Baxter operator $\tau $, denote by $A^{(\tau )}$
the $\Sigma^{(3)}$-algebra defined on the space $A$ by
\[
 f^H(a_1,\dots, a_n) = f(a_1^H,\dots, a_n^H),
\]
where
$f\in \Sigma$, $i=1,\dots, n$, $a_1,\dots, a_n \in A$,
$a_i^H$ are given by \eqref{eq:a^H_iDefn}.

The same relations restricted to $|H|=1$ define a $\Sigma^{(2)}$-algebra structure on $A$
also denoted by $A^{(\tau)}$.

\begin{theorem}\label{thm:RB-operators}
 \begin{enumerate}
\item If $A\in \mathfrak M$ and $\tau $ is a Rota---Baxter operator of zero weight on $A$
then $A^{(\tau )}$ is a $\pre\mathfrak M$-algebra.

\item If $A\in \mathfrak M$ and $\tau $ is a Rota---Baxter operator of unit weight on $A$
then $A^{(\tau )}$ is a $\post\mathfrak M$-algebra.

\item Every $D\in \pre\mathfrak M$ may be embedded into $A^{(\tau)}$
for an appropriate $A\in \mathfrak M$ equipped with Rota---Baxter operator $\tau $ of zero weight.

\item Every $T\in \post\mathfrak M$ may be embedded into $A^{(\tau)}$
for an appropriate $A\in \mathfrak M$ equipped with Rota---Baxter operator $\tau $ of unit weight.
\end{enumerate}
\end{theorem}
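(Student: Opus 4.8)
The plan is to reduce Theorem~\ref{thm:RB-operators} to Theorem~\ref{lem:Splitting_vs_RB} (the equivalence $T\in\post\mathfrak M \iff \widehat T\in\mathfrak M$), exactly as was done for di- and tri-algebras via Theorem~\ref{thm:EmbeddingIntoCurrent}. As with the earlier results, statements (1) and (3) are the weight-zero restrictions of (2) and (4), obtained by keeping only singletons $H=\{i\}$, so it suffices to treat the $\post$-case.

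For part (2), let $A\in\mathfrak M$ carry a Rota--Baxter operator $\tau$ of unit weight. The key step is to exhibit a homomorphism (in fact an isomorphism onto its image) of $\Sigma$-algebras $A\to\widehat{A^{(\tau)}}$. I would try the map $\psi: a\mapsto \tau(a) + (a - \tau(a))'$, or more symmetrically note that $\widehat{A^{(\tau)}} = A\oplus A'$ with operations \eqref{eq:HatOperations} built from $f^H(a_1,\dots,a_n)=f(a_1^H,\dots,a_n^H)$; the natural candidate is to send $a$ to the pair whose ``unprimed part'' records $\tau(a)$ and whose ``primed part'' records $a$ itself (after subtracting $\tau(a)$), so that applying $f$ in $\widehat{A^{(\tau)}}$ and then reading off both components reproduces, via \eqref{eq:Rota-BaxterGeneral}, the identity $f(\tau a_1,\dots,\tau a_n)=\sum_H \tau(f(a_1^H,\dots,a_n^H)) = \sum_H \tau(f^H(a_1,\dots,a_n))$ in the unprimed slot and $\sum_H f^H(c_1^H,\dots,c_n^H)$ in the primed slot. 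Checking that this $\psi$ is a $\Sigma$-algebra homomorphism is then a direct computation comparing \eqref{eq:HatOperations} with \eqref{eq:Rota-BaxterGeneral}; once that is done, $\widehat{A^{(\tau)}}$ contains a subalgebra isomorphic to $A$ plus, I expect, a trivial complement, hence lies in $\mathfrak M$, so $A^{(\tau)}\in\post\mathfrak M$ by Theorem~\ref{lem:Splitting_vs_RB}. (Since $\mathfrak M$ is a variety, closure under subalgebras is automatic.)

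For part (4), given $T\in\post\mathfrak M$, Theorem~\ref{lem:Splitting_vs_RB} gives $\widehat T = T\oplus T'\in\mathfrak M$. Set $A=\widehat T$ and define $\tau:\widehat T\to\widehat T$ as the projection onto the primed copy along the unprimed one (or the reverse — one orientation will make the Rota--Baxter axiom work), i.e. $\tau(a)=a'$, $\tau(a')=a'$ for $a\in T$, so that $\tau$ is idempotent with image $T'$. I would then verify that the identity \eqref{eq:HatOperations} for $\widehat T$ is precisely the unit-weight Rota--Baxter identity \eqref{eq:Rota-BaxterGeneral} for this $\tau$: evaluating $f$ on $\tau$-images lands in the primed copy, where by \eqref{eq:HatOperations} it equals $\bigl(\sum_H f^H(\text{suitable components})\bigr)'$, and the $|H|=1,\dots,n$ terms assemble with the $\lambda^{|H|-1}=1$ weights into $\sum_H \tau(f(a_1^H,\dots,a_n^H))$ where the $a_i^H$ are as in \eqref{eq:a^H_iDefn}. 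Finally one identifies $T$ inside $A^{(\tau)}$: the map $a\mapsto a\in\widehat T$ should be a $\Sigma^{(3)}$-algebra embedding $T\hookrightarrow \widehat T^{(\tau)}$, since $f^H$ computed in $\widehat T^{(\tau)}$ via \eqref{eq:a^H_iDefn} reproduces $f^H$ of $T$ by the way \eqref{eq:HatOperations} was set up.

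The main obstacle, as usual in these arguments, is bookkeeping rather than ideas: getting the orientations of the primed/unprimed copies and of $\tau$ consistent so that the weight-$1$ Rota--Baxter signs $\lambda^{|H|-1}$ match the multiplicities coming from the $\sum_{H\in\mathcal P(n)}$ in \eqref{eq:HatOperations}, and confirming that the embedding $T\hookrightarrow \widehat T^{(\tau)}$ is compatible with all the decorated operations $f^H$ simultaneously (including the ``zero identities'' \eqref{eq:Zero-Identities} that hold automatically on any $A^{(\tau)}$ because $a_i^H$ does not depend on $a_i$ for $i\notin H$). Both checks are routine inductions on the length of monomials, entirely parallel to the proofs of Theorems~\ref{thm:EmbeddingIntoCurrent} and~\ref{thm:AveragingOper}, and I would simply remark that the binary case was done in \cite{GubKol2013} and the general case is the same.
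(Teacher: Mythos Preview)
Your plan has two genuine gaps, one in each direction.

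\textbf{Part (2).} The logic is backwards. Exhibiting a $\Sigma$-homomorphism $A\to\widehat{A^{(\tau)}}$ (and there is an obvious one, namely $a\mapsto a'$, since $f^{\{1,\dots,n\}}=f$ in $A^{(\tau)}$) shows only that $\widehat{A^{(\tau)}}$ \emph{contains} a subalgebra in $\mathfrak M$; it does not show $\widehat{A^{(\tau)}}\in\mathfrak M$. Your hoped-for ``trivial complement'' is not trivial: the unprimed copy is a subalgebra with operation $\sum_H f^H$, and varieties are not closed under extensions. The map $a\mapsto\tau(a)+(a-\tau(a))'$ fares no better, since checking it forces you to evaluate $\tau(\tau(a_i))$, about which the Rota--Baxter axiom says nothing. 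The paper avoids all of this by working directly with Definition~\ref{defn:postAlg_via_ComTrias}: one shows by a short induction on the degree of $\Phi\in\mathfrak F(m)$ that in $C\boxtimes A^{(\tau)}$ one has $\Phi(y_1\otimes a_1,\dots,y_m\otimes a_m)=\sum_H e^{(m)}_H(y)\otimes\Phi(a_1^H,\dots,a_m^H)$, which vanishes for $\Phi\in S(\mathfrak M)$ since $A\in\mathfrak M$.

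\textbf{Part (4).} The idempotent projection you propose is \emph{not} a Rota--Baxter operator of weight~1 on $\widehat T$, in either orientation. A direct check already fails for a binary $f$: with $\tau(a)=a'$, $\tau(a')=a'$ and inputs $u_i=a_i+b_i'$, the left side $f(\tau u_1,\tau u_2)$ lands in the primed copy and equals $\bigl(f^{\{1,2\}}(a_1{+}b_1,a_2{+}b_2)\bigr)'$, whereas the right side $\sum_H\tau(f(u_1^H,u_2^H))$ picks up nonzero $f^{\{1\}}$ and $f^{\{2\}}$ contributions. The paper's operator on $A=C_2\boxtimes T=\widehat T$ is different and not idempotent: in the $\widehat T$ coordinates it reads $\tau(a)=-a$, $\tau(b')=b$ (equivalently $\tau(e_1\otimes a)=-e_1\otimes a$, $\tau(e_2\otimes a)=e_1\otimes a$). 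The sign is essential: verifying \eqref{eq:Rota-BaxterGeneral} then becomes an inclusion--exclusion identity $\sum_{\emptyset\ne H\subseteq K}(-1)^{|H|}=-1$ together with a cancellation over $H\not\subseteq K$, which is the actual content of the computation. The embedding of $T$ is then $a\mapsto e_2\otimes a$ (the primed copy), not the unprimed one you suggest.
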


\begin{proof}
 As in Theorem \ref{thm:AveragingOper}, let us prove (2) and (4).

For (2), it is enough to consider $C\boxtimes A^{(\tau )}$ for any $C\in \ComTrias$, and note
(by induction on $m\ge 1$) that
\[
 \Phi(y_1\otimes a_1,\dots, y_m\otimes a_m)
 = \sum\limits_{H\in \mathcal P(m)} e^{(m)}(y_1,\dots, y_m)\otimes \Phi (a_1^H,\dots, a_m^H)
\]
for every $\Phi \in \mathfrak F(m)$.

Hence, $C\boxtimes A^{(\tau )}\in \mathfrak M$.

To prove (4), consider $A=C_2\boxtimes T \in \mathfrak M$, where $C_2$ is the algebra from Example \ref{exmp:C_2},
 and define
\begin{equation}\label{eq:RB_Hat}
\tau (e_1\otimes a) = -e_1\otimes a, \quad \tau(e_2\otimes a)= e_1\otimes a, \quad a\in T.
\end{equation}
Let us show that \eqref{eq:RB_Hat} is a Rota---Baxter operator of unit weight
on $C_2\boxtimes T$. Indeed, suppose $f\in \Sigma $, $\nu(f)=n$,
$u_i=e_{k_i}\otimes a_i$, $k_i\in \{1,2\}$, $a_i\in T$, $i=1,\dots, n$.
Evaluate the left-hand side of \eqref{eq:Rota-BaxterGeneral}:
\[
 f(\tau(u_1), \dots , \tau(u_n)) = (-1)^{|K|}f(e_1\otimes a_1,\dots, e_1\otimes a_n),
\]
where $K=\{i\mid k_i=1\}$.
On the other hand,
\begin{multline}\nonumber
 f(u_1^H, \dots, u_n^H) =
 (-1)^{ |K\setminus H| } f(e_{k_1'}\otimes a_1,\dots, e_{k_n'} \otimes a_n) \\
= (-1)^{ |K\setminus H| } \sum\limits_{M\in \mathcal P(n)}
 e_M^{(n)} (e_{k_1'},\dots , e_{k_n'})\otimes f^M(a_1,\dots, a_n),
\end{multline}
%%%%%%%%%%%%%%%%%%%% u_i^H = \tau(u_i) iff i\notin H
where $k'_i=\begin{cases}
             k_i, & i\in H, \\
             1, & i\notin H
            \end{cases}$.
Nonzero summands appear in two cases: (1)  $k'_i=1$ for all $i=1,\dots, n$; (2) $k'_i=2$ if and only if $i\in M$.
The first case occurs if and only if $H\subseteq K$, the second one corresponds to $M=H\setminus K$.
Hence,
\[
 f(u_1^H, \dots, u_n^H)= \begin{cases}
 (-1)^{|K|-|H|} e_1 \otimes \sum\limits_{M\in \mathcal P(n)} f^M(a_1,\dots, a_n), & H\subseteq K, \\
 (-1)^{|K\setminus H|} e_2\otimes f^{H\setminus K}(a_1,\dots, a_n), & H\not\subseteq K.
\end{cases}
\]
Let us evaluate the right-hand side of \eqref{eq:Rota-BaxterGeneral}:
\begin{multline}\label{eq:RB_relation_check}
 \sum\limits_{H\in \mathcal P(n)}\tau (f(u_1^H, \dots, u_n^H)) 
=
\sum\limits_{\emptyset\ne H\subseteq K}(-1)^{|K|-|H|+1} f(e_1\otimes a_1,\dots , e_1\otimes a_n) \\
+
\sum\limits_{H\not\subseteq K} (-1)^{|K\setminus H|} e_1\otimes f^{H\setminus K}(a_1,\dots, a_n).
\end{multline}
The first summand in the right-hand side of \eqref{eq:RB_relation_check}
is equal to $(-1)^{|K|}f(e_1\otimes a_1,\dots , e_1\otimes a_n)$
since
\[
 \sum\limits_{H\subseteq K}(-1)^{|H|} = 1 + \sum\limits_{\emptyset\ne H\subseteq K}(-1)^{|H|} = 0.
\]
In the second summand, present $H\not\subseteq K$ as
$H=U\cup M$, $U\subseteq K$, $M\ne \emptyset$, $M\cap K=\emptyset$. Then
\[
\sum\limits_{U\subseteq K}\sum\limits_{\begin{subarray}{c} M\ne \emptyset \\ M\cap K=\emptyset\end{subarray}}
 (-1)^{|K|-|U|} e_1\otimes f^{M}(a_1,\dots, a_n) = 0
\]
by the same reasons.

We have proved that \eqref{eq:Rota-BaxterGeneral} holds for $\tau $ ($\lambda =1$), and thus it is a Rota---Baxter operator of unit weight.
\end{proof}

\begin{remark}
Theorem \ref{thm:RB-operators} implies that Definition \ref{defn:postAlg_via_ComTrias}
provides an equivalent description of the same class of systems as
the splitting procedure described in
in \cite{BaiGuoNi2012}: $\pre\mathfrak M = \EuScript{A}\mathrm{Sp}(\mathfrak M)$,
$\post\mathfrak M = \EuScript{B}\mathrm{Sp}(\mathfrak M)$.

In the binary case, $\pre\mathfrak M$ and $\post\mathfrak M$ coincide with
operads denoted in \cite{GubKol2013} by $\mathrm{DendDi}\mathfrak{M}$ and $\mathrm{DendTri}\mathfrak{M}$, respectively.
\end{remark}

\begin{remark}
Indeed, it was shown in \cite{GubKol2013} that
if $\mathfrak M$ is a binary quadratic operad then
$\pre\mathfrak M = \pre\Lie \medbullet \mathfrak M$,
$\post\mathfrak M = \post\Lie \medbullet \mathfrak M$,
where $\medbullet $ is the Manin black product of operads \cite{GinKapr94},
\[
(\pre\mathfrak M)^! = \di(\mathfrak M^!), \quad  (\post\mathfrak M)^! = \tri(\mathfrak M^!),
\]
where $!$ stands for Koszul duality of operads.
\end{remark}

\section{Problems on replicated algebras}\label{sec4}

In this section, we consider a series of problems for replicated algebras.
Some of them have already been solved in particular cases. Here we will show
how to solve them in general.

\subsection{Codimension of varieties}

Given an operad $\mathfrak M$, the number $c_n(\mathfrak M)=\dim\mathfrak M(n)$, $n\ge 1$,
(if it is finite)
is called {\em codimension} of $\mathfrak M$.
The growth of codimensions, namely, of $\sqrt[n]{c_n(\mathfrak M)}$
is intensively studied since the seminal paper \cite{GiamZaicev1998} for associative algebras. 

It follows immediately from definition that for a variety $\di\mathfrak M $ or $\tri\mathfrak M$
the codimension may be explicitly evaluated as a product of $c_n(\Perm)$ or $c_n(\ComTrias)$ 
with $c_n(\mathfrak M)$.

\begin{proposition} For every operad $\mathfrak M$, 
$ c_n(\di\mathfrak M) = nc_n(\mathfrak M)$, 
$c_n(\tri\mathfrak M) = (2^n-1)c_n(\mathfrak M)$.
\end{proposition}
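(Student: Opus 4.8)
The plan is to use the fact, established in Definition~\ref{defn:Di-Tri-Algebras}, that $\di\mathfrak M$ and $\tri\mathfrak M$ are Hadamard products of operads: $\di\mathfrak M = \Perm\otimes\mathfrak M$ and $\tri\mathfrak M = \ComTrias\otimes\mathfrak M$. For the Hadamard (Manin white, in the binary quadratic case) product of two operads, by construction one has $(\mathfrak P\otimes\mathfrak M)(n) = \mathfrak P(n)\otimes_{\Bbbk}\mathfrak M(n)$ as $S_n$-modules, the symmetric group acting diagonally. Hence the dimension of the $n$-ary component is simply the product $\dim\mathfrak P(n)\cdot\dim\mathfrak M(n)$, i.e. $c_n(\mathfrak P\otimes\mathfrak M) = c_n(\mathfrak P)\,c_n(\mathfrak M)$.

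First I would recall the values of the codimensions of the two base operads. From the Example attributed to~\cite{Chapoton01}, the monomials $e_i^{(n)} = (x_1\dots x_{i-1}x_{i+1}\dots x_n)x_i$ for $i = 1,\dots,n$ form a linear basis of $\Perm(n)$, so $c_n(\Perm) = \dim\Perm(n) = n$. From the Example attributed to~\cite{Vallette_2007}, the corollas $e^{(n)}_H$ indexed by $H\in\mathcal P(n)$ form a linear basis of $\ComTrias(n) = C(n)$, and since $\mathcal P(n)$ is the set of all nonempty subsets of an $n$-element set, $c_n(\ComTrias) = |\mathcal P(n)| = 2^n - 1$.

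Then I would simply combine: $c_n(\di\mathfrak M) = c_n(\Perm)\,c_n(\mathfrak M) = n\,c_n(\mathfrak M)$, and $c_n(\tri\mathfrak M) = c_n(\ComTrias)\,c_n(\mathfrak M) = (2^n - 1)\,c_n(\mathfrak M)$. Note that when $c_n(\mathfrak M)$ is infinite the statement is vacuous (both sides infinite), so no separate treatment is needed.

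The only genuine point requiring care — and the step I expect to be the main obstacle, though a minor one — is justifying the identity $\dim(\mathfrak P\otimes\mathfrak M)(n) = \dim\mathfrak P(n)\cdot\dim\mathfrak M(n)$ at the level of the \emph{varieties} rather than the underlying collections. A priori one might worry that passing from the free-operad quotient presentations of $\di\mathfrak M$ and $\tri\mathfrak M$ (via the defining identities $S^{(2)}(\mathfrak M)$, $S^{(3)}(\mathfrak M)$ described in Section~\ref{sec:Replication}) back to the Hadamard tensor product loses or gains dimensions; but this is precisely the content of Definition~\ref{defn:Di-Tri-Algebras} together with the surjectivity and embedding results around Theorem~\ref{thm:EmbeddingIntoCurrent}, which identify the operad governing the variety with the Hadamard product on the nose. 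So I would invoke Definition~\ref{defn:Di-Tri-Algebras} directly and compute the dimension of the tensor product componentwise, which closes the argument.
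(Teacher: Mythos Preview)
Your proposal is correct and follows exactly the paper's approach: the paper simply states that the result ``follows immediately from definition'' (Definition~\ref{defn:Di-Tri-Algebras}), since the Hadamard product satisfies $(\mathfrak P\otimes\mathfrak M)(n)=\mathfrak P(n)\otimes\mathfrak M(n)$ and the dimensions of $\Perm(n)$ and $\ComTrias(n)$ were already computed. Your final paragraph overcomplicates matters slightly, since $\di\mathfrak M$ and $\tri\mathfrak M$ are \emph{defined} as Hadamard products, so there is nothing to reconcile between the variety and the collection; Theorem~\ref{thm:EmbeddingIntoCurrent} is not needed here.
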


In particular, if $\mathfrak M$ is a variety of Lie algebras of polynomial codimension growth 
then so is the variety $\di\mathfrak M$ of Leibniz algebras.

\subsection{Replication of morphisms of operads}
Let $\mathfrak M,\mathfrak N$ be two operads.
Suppose $\omega : \mathfrak N\to \mathfrak M$ is a morphism of operads.
Then for every algebra $A$ in $\mathfrak M$ one may define $A^{(\omega )}\in \mathfrak N$,
a new algebra structure on the same linear space $A$.

The well-known examples include $-: \Lie \to \As$, $x_1x_2\mapsto x_1x_2-x_2x_1$,
a similar morphism $\Mal \to \Alt$, as well as
$+:\Jord \to \As$, $x_1x_2\mapsto x_1x_2+x_2x_1$, and many others.

For every $B\in \mathfrak N$ there exists unique (up to isomorphism) algebra
$U_\omega (B)\in \mathfrak M$ such that:
\begin{itemize}
 \item There exists a homomorphism $\iota: B\to U_{\omega }(B)^{(\omega )}$ of algebras in $\mathfrak N$;
 \item For every algebra $A\in \mathfrak M$ and for every
 homomorphism $\psi : B\to A^{(\omega )}$ there exists unique homomorphism
$\xi : U_{\omega }(B)\to A$ of algebras in $\mathfrak M$ such that
$\psi(b) = \xi(\iota (b))$ for all $b\in B$.
\end{itemize}
The algebra $U_\omega (B)$ is called the {\em universal enveloping} algebra of $B$
with respect to $\omega $. Note that $\iota $ is not necessarily injective,
e.g., for the Albert algebra $H_3(\mathbb O)\in \Jord $
the universal enveloping associative algebra (with respect to $+$) is equal to $\{0\}$.

Definition \ref{defn:Di-Tri-Algebras} immediately implies

\begin{proposition}
 Given a morphism of operads
$\omega : \mathfrak N\to \mathfrak M$, the map
$\id\otimes \omega : \tri\mathfrak N=\ComTrias\otimes \mathfrak N\to \ComTrias\otimes \mathfrak M = \tri\mathfrak M$
is also a morphism of operads.
\end{proposition}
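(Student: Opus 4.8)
The plan is to verify directly that $\id\otimes\omega$ respects the operad structure on Hadamard products. Recall that for any two operads $\mathfrak P$ and $\mathfrak Q$, the Hadamard product $\mathfrak P\otimes\mathfrak Q$ is the collection of $S_n$-modules $(\mathfrak P\otimes\mathfrak Q)(n) = \mathfrak P(n)\otimes\mathfrak Q(n)$ with the diagonal $S_n$-action and with composition defined componentwise: $(p\otimes q)(p_1\otimes q_1,\dots,p_n\otimes q_n) = p(p_1,\dots,p_n)\otimes q(q_1,\dots,q_n)$. So $\tri\mathfrak N = \ComTrias\otimes\mathfrak N$ and $\tri\mathfrak M = \ComTrias\otimes\mathfrak M$ are built from the same first factor.

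Next I would observe that if $\omega:\mathfrak N\to\mathfrak M$ is a morphism of operads, then $\id_{\ComTrias}\otimes\omega$ is, degreewise, the $\Bbbk$-linear map $\ComTrias(n)\otimes\mathfrak N(n)\to\ComTrias(n)\otimes\mathfrak M(n)$ sending $e^{(n)}_H\otimes\Phi\mapsto e^{(n)}_H\otimes\omega(\Phi)$. Equivariance is immediate: $\id$ is $S_n$-equivariant, $\omega$ is $S_n$-equivariant by hypothesis, so their tensor product intertwines the diagonal actions. Compatibility with composition is equally formal: for elements $e^{(n)}_H\otimes\Phi\in\tri\mathfrak N(n)$ and $e^{(n_i)}_{H_i}\otimes\Psi_i\in\tri\mathfrak N(n_i)$, applying $\id\otimes\omega$ after the componentwise composition gives $e^{(n)}_{H(H_1,\dots,H_n)}\otimes\omega\bigl(\Phi(\Psi_1,\dots,\Psi_n)\bigr)$, while composing in $\tri\mathfrak M$ the images $e^{(n)}_H\otimes\omega(\Phi)$ and $e^{(n_i)}_{H_i}\otimes\omega(\Psi_i)$ gives $e^{(n)}_{H(H_1,\dots,H_n)}\otimes\omega(\Phi)\bigl(\omega(\Psi_1),\dots,\omega(\Psi_n)\bigr)$; these agree precisely because $\omega$ commutes with composition in $\mathfrak N$ and $\mathfrak M$. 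Finally $\id\otimes\omega$ sends the unit $e^{(1)}_{\{1\}}\otimes\id_{\mathfrak N}$ to $e^{(1)}_{\{1\}}\otimes\id_{\mathfrak M}$, the unit of $\tri\mathfrak M$.

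There is essentially no obstacle here: the statement is a formal consequence of the functoriality of the Hadamard product $\mathfrak P\otimes(-)$ in its second argument, and the proof is just unwinding the definitions of equivariance, composition, and unit for Hadamard products. If one wishes to be more careful, the only point worth spelling out is that $\id\otimes\omega$ is well-defined on the quotient presentations — i.e., that $\omega$ sending $\Id(\mathfrak N)$ into $\Id(\mathfrak M)$ (which is exactly what it means to be a morphism) guarantees $\id\otimes\omega$ respects $\Id(\tri\mathfrak N)$ — but this is automatic once composition compatibility is checked, since the relations defining $\tri\mathfrak N$ and $\tri\mathfrak M$ are generated under composition by the relations of $\ComTrias$ (common to both) together with the images of those of $\mathfrak N$ and $\mathfrak M$. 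Accordingly I would present the proof in a couple of sentences, citing Definition \ref{defn:Di-Tri-Algebras} and the componentwise description of Hadamard composition.
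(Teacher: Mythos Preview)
Your proposal is correct and takes essentially the same approach as the paper: the paper simply states that the proposition follows immediately from Definition~\ref{defn:Di-Tri-Algebras}, and your argument is precisely the unwinding of that remark, verifying that the Hadamard product $\ComTrias\otimes(-)$ is functorial in its second argument. The extra paragraph about quotient presentations is unnecessary here, since the Hadamard product is defined directly on the operads $\mathfrak N$ and $\mathfrak M$ rather than via their free-operad presentations.
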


A similar statement for di-algebra case obviously holds.

\subsection{PBW-type problems}
The following natural problems appear each time when we consider a morphism of
operads $\omega : \mathfrak N\to \mathfrak M$.
\begin{itemize}
 \item Embedding problem: Whether every $B\in \mathfrak N$  is special with respect to $\omega $?
  \item Ado problem: Whether every finite-dimensional algebra $B\in \mathfrak N$ is a subalgebra of
$A^{(\omega )}$, where $A\in \mathfrak M$, $\dim A<\infty $?
 \item Poincar\'e---Birkhoff---Witt (PBW) problem: Given $B\in \mathfrak N$,
what is the structure of the universal enveloping algebra $U_{\omega }(B)\in \mathfrak M$?
\end{itemize}

Suppose $\mathfrak N$ and $\mathfrak M$ are varieties
of $\Sigma-$ and $\Sigma'$-algebras, respectively.
Throughout this section, assume $\nu(f)\ge 2$ for all $f\in \Sigma \cup \Sigma'$.

The following Lemma is an immediate corollary of definitions.

\begin{lemma}
For every morphism of operads
$\omega : \mathfrak N\to \mathfrak M$
and for every
 $A\in \tri\mathfrak M$, $C\in \ComTrias$ we have
\[
C\otimes A^{(\omega )} = (C\otimes A)^{(\id\otimes \omega )} \in \tri\mathfrak N.
\]
\end{lemma}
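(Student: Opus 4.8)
The statement asserts that for a morphism of operads $\omega:\mathfrak N\to\mathfrak M$, an algebra $A\in\tri\mathfrak M$, and $C\in\ComTrias$, the two natural $\tri\mathfrak N$-structures on the same underlying space $C\otimes A$ agree. The cleanest route is to recall that $\tri\mathfrak M=\ComTrias\otimes\mathfrak M$ is a Hadamard product of operads, and that $A^{(\omega)}$ is by definition the pullback of the $\mathfrak M$-structure along $\omega$, i.e. each operation of $\mathfrak N(n)$ acts on $A^{(\omega)}$ via its image under $\omega(n):\mathfrak N(n)\to\mathfrak M(n)$. Likewise, $C\otimes A\in\tri\mathfrak M$ has its operations given on generators by $e^{(n)}_H\otimes f$ acting as $e^{(n)}_H$ on the $C$-factor and $f$ on the $A$-factor, and $(\id\otimes\omega):\tri\mathfrak N\to\tri\mathfrak M$ is the morphism from the penultimate Proposition.

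First I would spell out both sides on an arbitrary element $\Phi\in\mathfrak N(n)$ (it suffices to check on operad elements, hence by linearity and composition on generators, but actually the statement is ``element-free'' enough to check directly on all of $\mathfrak N(n)$). On the left side, $C\otimes A^{(\omega)}$ lets $\Phi$ act as $\sum_H e^{(n)}_H(\cdot)\otimes \Phi^{(H)}_{A^{(\omega)}}(\cdot)$ where the $A$-component operation is $\Phi$ evaluated in $A^{(\omega)}$, which unwinds to $\omega(n)(\Phi)$ evaluated in $A$. On the right side, $(C\otimes A)^{(\id\otimes\omega)}$ lets $\Phi$ act as $(\id\otimes\omega)(n)(\text{image of }\Phi\text{ in }\tri\mathfrak N)$ evaluated in $C\otimes A$; but the image of $\Phi$ under $\tri\mathfrak N(n)$-to-$\tri\mathfrak M(n)$ is precisely $\id_{\ComTrias(n)}\otimes\omega(n)$ applied termwise, which replaces each $\mathfrak N$-component by its $\omega$-image while leaving the $\ComTrias$-component untouched — giving exactly the same expression. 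So the verification is just matching the two descriptions of the Hadamard-product action after pullback.

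The key formal point — and the only thing that needs care — is the compatibility of the Hadamard tensor product of operads with the pullback functor $(-)^{(\omega)}$: namely that for any operad $\mathfrak P$ one has $(\mathfrak P\otimes\mathfrak M)^{(\id_{\mathfrak P}\otimes\omega)}$-action on $\mathfrak P(n)\otimes(\text{space})$ coincides with the $(\mathfrak P\otimes\mathfrak N)$-action obtained by first pulling back in the second tensor factor. This is essentially the functoriality of the Hadamard product in each argument together with the elementary fact that pullback of algebra structure along a composite morphism is the composite of pullbacks. I would state this as a one-line observation: $(\id\otimes\omega)$ is the Hadamard product of $\id_{\ComTrias}$ with $\omega$, so $(C\otimes A)^{(\id\otimes\omega)}$ and $C\otimes(A^{(\omega)})$ are both computed by applying $\id_{\ComTrias}\otimes\omega$ to structure maps, hence are identical as $\Sigma^{(3)}$-algebras (not merely isomorphic). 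That $C\otimes A^{(\omega)}$ lands in $\tri\mathfrak N$ then follows from the Proposition preceding the Lemma, since $A^{(\omega)}\in\mathfrak N$ and $C\in\ComTrias$.

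I do not expect a genuine obstacle here; the ``hard part'' is purely bookkeeping — being explicit about which tensor factor each operation acts on and checking that $\omega$ commutes past the $\ComTrias$-corolla bookkeeping — because all the real content was already packaged into Definition~\ref{defn:Di-Tri-Algebras} and the preceding Proposition. A complete write-up would be three or four lines: unwind $C\otimes A^{(\omega)}$, unwind $(C\otimes A)^{(\id\otimes\omega)}$, observe both equal $\sum_{H\in\mathcal P(n)} e^{(n)}_H(x_1,\dots,x_n)\otimes (\omega(n)\Phi)^{(H)}(a_1,\dots,a_n)$ on elements $x_i\otimes a_i$, and invoke the Proposition to conclude membership in $\tri\mathfrak N$.
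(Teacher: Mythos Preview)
Your proposal is correct and matches the paper's approach: the paper simply states the lemma is ``an immediate corollary of definitions'' with no further argument, and your write-up is precisely the detailed unwinding of that claim --- pulling back along $\id\otimes\omega$ on the Hadamard product acts as $\id$ on the $\ComTrias$-factor and as $\omega$ on the $\mathfrak M$-factor, which is exactly the structure on $C\otimes A^{(\omega)}$. (Note that the hypothesis $A\in\tri\mathfrak M$ in the statement is a typo for $A\in\mathfrak M$, as confirmed by the usage in Theorem~\ref{thm:EmbedAdo_problemsReplicated}; you read it correctly.)
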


A similar statement holds for di-algebras \cite{KolVoronin2013}.

\begin{theorem}\label{thm:EmbedAdo_problemsReplicated}
If the embedding problem has positive solution for $\omega: \mathfrak N\to \mathfrak M$
then it has positive solution for $\id\otimes \omega : \di\mathfrak N \to \di\mathfrak M$
and for $\id\otimes \omega : \tri\mathfrak N \to \tri\mathfrak M$.
The same statement holds for the Ado problem.
\end{theorem}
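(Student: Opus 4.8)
The plan is to transport a solution of the embedding problem for $\omega$ to a solution for $\id\otimes\omega$ by tensoring everything with a $\ComTrias$-algebra (respectively a $\Perm$-algebra) and invoking the functoriality results already established. I will only treat the tri-case; the di-case is obtained by restricting to singletons exactly as in Definition~\ref{defn:Di-Tri-Algebras}.

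First, recall what ``positive solution of the embedding problem for $\omega\colon\mathfrak N\to\mathfrak M$'' means: the canonical homomorphism $\iota\colon B\to U_\omega(B)^{(\omega)}$ is injective for every $B\in\mathfrak N$, equivalently every $B\in\mathfrak N$ embeds into some $A^{(\omega)}$ with $A\in\mathfrak M$. So let $T\in\tri\mathfrak N$ be given. By Theorem~\ref{thm:EmbeddingIntoCurrent} applied to the operad $\mathfrak N$ (the hypothesis $\nu(f)\ge 2$ for all $f\in\Sigma$ is in force), $T$ embeds into $C_2\otimes\widetilde T$ with $\widetilde T\in\mathfrak N$ and $C_2\in\ComTrias$ the algebra of Example~\ref{exmp:C_2}. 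Now use the embedding hypothesis on $\widetilde T$: there is $A\in\mathfrak M$ and an injective $\mathfrak N$-homomorphism $\psi\colon\widetilde T\to A^{(\omega)}$. The key step is the Lemma immediately preceding the theorem, which gives $C_2\otimes A^{(\omega)}=(C_2\otimes A)^{(\id\otimes\omega)}$ as algebras in $\tri\mathfrak N$; since $\id\otimes\omega$ is a morphism of operads and $A\in\mathfrak M$ gives $C_2\otimes A\in\tri\mathfrak M$ (Definition~\ref{defn:Di-Tri-Algebras}), this is the target we want. Tensoring $\psi$ with $\id_{C_2}$ yields an injective homomorphism $\id_{C_2}\otimes\psi\colon C_2\otimes\widetilde T\to C_2\otimes A^{(\omega)}=(C_2\otimes A)^{(\id\otimes\omega)}$ of $\tri\mathfrak N$-algebras. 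Composing with the embedding $T\hookrightarrow C_2\otimes\widetilde T$ exhibits $T$ as a subalgebra of $(C_2\otimes A)^{(\id\otimes\omega)}$ with $C_2\otimes A\in\tri\mathfrak M$, which is precisely the positive solution of the embedding problem for $\id\otimes\omega$.

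For the Ado problem the argument is identical, but one must track finite dimensionality at each stage. If $T$ is finite-dimensional, then $\widetilde T=\bar T\oplus T$ is finite-dimensional by construction (see Theorem~\ref{thm:EmbeddingIntoCurrent}), so the positive solution of the Ado problem for $\omega$ provides a \emph{finite-dimensional} $A\in\mathfrak M$ with $\widetilde T\hookrightarrow A^{(\omega)}$; then $C_2\otimes A\in\tri\mathfrak M$ is finite-dimensional, and the chain of embeddings above realizes $T$ inside the finite-dimensional $(C_2\otimes A)^{(\id\otimes\omega)}$.

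The main obstacle to watch is purely bookkeeping: one must be sure that all three maps in play---the embedding $T\hookrightarrow C_2\otimes\widetilde T$ of Theorem~\ref{thm:EmbeddingIntoCurrent}, the map $\psi$, and the identification $C_2\otimes A^{(\omega)}=(C_2\otimes A)^{(\id\otimes\omega)}$---are compatible as homomorphisms of $\Sigma^{(3)}$-algebras, i.e.\ that tensoring with $\id_{C_2}$ genuinely commutes with the operadic reconstruction of the $\tri$-operations. This is exactly the content of the Lemma preceding the theorem together with the functoriality recorded in Lemma~\ref{lem:Functorial_Tilde}, so no new computation is needed; the proof is a short diagram chase once these ingredients are assembled.
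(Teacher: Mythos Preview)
Your proof is correct and follows essentially the same route as the paper: embed $T$ into $C_2\otimes\widetilde T$ via Theorem~\ref{thm:EmbeddingIntoCurrent}, apply the hypothesis to $\widetilde T$ to get $\widetilde T\hookrightarrow A^{(\omega)}$, then use the identification $C_2\otimes A^{(\omega)}=(C_2\otimes A)^{(\id\otimes\omega)}$ from the preceding Lemma to conclude; the Ado statement is handled by tracking finite dimensionality through the same chain. The only difference is that the paper's proof is terser and does not explicitly invoke Lemma~\ref{lem:Functorial_Tilde} (it is not needed here, since the functoriality of $C_2\otimes(-)$ on an injective $\mathfrak N$-homomorphism is immediate).
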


\begin{proof}
 Let us consider tri-algebra case. If $T\in \tri\mathfrak N$
then $\widetilde T\in \mathfrak N$ and $T\subset C_2\otimes \widetilde T$.
If $\widetilde T\subseteq A^{(\omega )}$ for some $A\in \mathfrak M$
then $T\subseteq C_2\otimes \widetilde T\subseteq C_2\otimes A^{(\omega )} = (C_2\otimes A)^{(\id\otimes \omega )}$,
$C_2\otimes A \in \tri\mathfrak M$.

Finally, if $\dim T<\infty$ then $\dim\widetilde T<\infty $ and $\dim(C_2\otimes \widetilde T)<\infty$,
the same holds for $A$. Hence, if $\widetilde T$ has a finite-dimensional envelope then so is $T$.
\end{proof}

Let $T\in \tri\mathfrak N$, $\widetilde T\in \mathfrak N$,
$\iota: \widetilde T \to U_\omega (\widetilde T)^{(\omega )}$ as above.

\begin{theorem}\label{thm:PBW_Replicated}
The subalgebra generated in
$C_2\otimes U_\omega (\widetilde T)\in \tri\mathfrak M$
by the set
$\{e_1\otimes \iota(\bar a) + e_2\otimes \iota(a) \mid a\in T\}$
is isomorphic to
$U_{\id\otimes \omega }(T)\in \tri\mathfrak M$.
\end{theorem}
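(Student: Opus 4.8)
The plan is to verify the universal property of $U_{\id\otimes\omega}(T)$ directly for the candidate algebra $E$, defined as the subalgebra of $C_2\otimes U_\omega(\widetilde T)\in\tri\mathfrak M$ generated by the elements $j(a):=e_1\otimes\iota(\bar a)+e_2\otimes\iota(a)$, $a\in T$. First I would check that the assignment $a\mapsto j(a)$ is a homomorphism of $\Sigma^{(3)}$-algebras from $T$ to $E^{(\id\otimes\omega)}$. This is the same computation as in the proof of Theorem~\ref{thm:EmbeddingIntoCurrent} (the map $T\to C_2\otimes\widetilde T$, $a\mapsto e_1\otimes\bar a+e_2\otimes a$, is there shown to be a $\Sigma^{(3)}$-homomorphism); one simply composes with $\id\otimes\iota$ and uses that $\iota$ is a $\mathfrak N$-homomorphism together with Lemma~\ref{lem:Functorial_Tilde}-style functoriality, so that $\id\otimes\iota:C_2\otimes\widetilde T\to C_2\otimes U_\omega(\widetilde T)$ respects the $\Sigma^{(3)}$-structure induced by $\id\otimes\omega$. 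Thus we have $\iota_T:T\to E^{(\id\otimes\omega)}$, the required structure map.

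Next I would establish the factorization property. Suppose $A\in\tri\mathfrak M$ and $\psi:T\to A^{(\id\otimes\omega)}$ is a $\tri\mathfrak N$-homomorphism; we must produce a unique $\xi:E\to A$ in $\tri\mathfrak M$ with $\psi=\xi\circ\iota_T$. The key device is to pass to the ``$\widetilde{(-)}$'' construction: by Lemma~\ref{lem:Functorial_Tilde} applied to the operad $\mathfrak N$ (and to $A^{(\id\otimes\omega)}\in\tri\mathfrak N$, whose associated $\mathfrak N$-algebra is $\widetilde{A^{(\id\otimes\omega)}}=\widetilde A^{(\omega)}$ — this identity should be checked, but follows from the compatibility of the white/Hadamard product with $\omega$ noted in the Lemma preceding Theorem~\ref{thm:EmbedAdo_problemsReplicated}), we get an $\mathfrak N$-homomorphism $\widetilde\psi:\widetilde T\to\widetilde A^{(\omega)}$. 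By the universal property of $U_\omega(\widetilde T)$ this factors as $\widetilde\psi=\bar\xi\circ\iota$ for a unique $\mathfrak M$-homomorphism $\bar\xi:U_\omega(\widetilde T)\to\widetilde A$. Tensoring with $C_2$ yields a $\tri\mathfrak M$-homomorphism $\id\otimes\bar\xi:C_2\otimes U_\omega(\widetilde T)\to C_2\otimes\widetilde A$, and restricting to the generating set we compute $(\id\otimes\bar\xi)(j(a))=e_1\otimes\overline{\widetilde\psi(a)}+e_2\otimes\widetilde\psi(a)$, which via the embedding $A\hookrightarrow C_2\otimes\widetilde A$ of Theorem~\ref{thm:EmbeddingIntoCurrent} is exactly the image of $\psi(a)\in A$. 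Hence $(\id\otimes\bar\xi)(E)$ lands inside the copy of $A$, giving the desired $\xi:E\to A$ with $\xi\circ\iota_T=\psi$; uniqueness is forced because $E$ is generated by the $j(a)$.

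The main obstacle I anticipate is the bookkeeping around the two auxiliary constructions interacting correctly: one must confirm that $\widetilde{\,\cdot\,}$ is functorial not just for $\tri\mathfrak N$-homomorphisms (Lemma~\ref{lem:Functorial_Tilde}) but behaves well under the change-of-operad induced by $\omega$, i.e. that $\widetilde{A^{(\id\otimes\omega)}}$ coincides with $\widetilde A$ carrying the $\omega$-transported $\Sigma$-structure, and likewise that the embedding $A\hookrightarrow C_2\otimes\widetilde A$ is natural in $A$ and compatible with $\id\otimes\omega$. These are all ``formal nonsense'' verifications, but they are the crux: once the square
\[
\begin{gathered}
T\xrightarrow{\ \iota_T\ }E^{(\id\otimes\omega)}\hookrightarrow (C_2\otimes U_\omega(\widetilde T))^{(\id\otimes\omega)}\\
\text{over}\quad \widetilde T\xrightarrow{\ \iota\ }U_\omega(\widetilde T)^{(\omega)}
\end{gathered}
\]
is known to commute with everything in sight, the theorem drops out. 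A secondary (routine) point is to check that $\iota_T$ need not be injective — mirroring the behaviour of $\iota$ — so no faithfulness claim is smuggled in; this requires nothing beyond observing that $\iota_T$ is built functorially from $\iota$.
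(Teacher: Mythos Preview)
Your approach is essentially the paper's own proof: build $\iota_T$ by composing the embedding $T\hookrightarrow C_2\otimes\widetilde T$ of Theorem~\ref{thm:EmbeddingIntoCurrent} with $\id\otimes\iota$, then for the universal property lift $\psi$ to $\widetilde\psi$ via Lemma~\ref{lem:Functorial_Tilde}, invoke the universal property of $U_\omega(\widetilde T)$, tensor with $C_2$, and observe the image lands in $A\subset C_2\otimes\widetilde A$. One small correction: the identification $\widetilde{A^{(\id\otimes\omega)}}=\widetilde A^{(\omega)}$ that you flag is not literally an equality---in general $A^{(\id\otimes\omega)}_0\subseteq A_0$ may be strict, so what one actually has (and all one needs) is a natural homomorphism $\widetilde{A^{(\id\otimes\omega)}}\to\widetilde A^{(\omega)}$, which is exactly how the paper phrases it.
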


\begin{proof}
Consider $\id\otimes \iota: C_2\otimes \widetilde T \to C_2\otimes U_\omega (\widetilde T)^{(\omega )}
= (C_2\otimes U_\omega (\widetilde T) )^{(\id\otimes \omega )}$. Hence, the restriction of
$\id\otimes \iota $ to $T\subseteq C_2\otimes \widetilde T$ maps $a\in T$ to
$e_1\otimes \iota(\bar a) +  e_2\otimes \iota (a)$.
Denote by $U$ the subalgebra of $C_2\otimes U_\omega (\widetilde T)\in \tri\mathfrak M$
generated by $(\id\otimes \iota) (T)$. Let us check the universal property for $U$.

Suppose $A\in \tri\mathfrak M$, and $\psi: T \to A^{(\id\otimes \omega )}$.
By Lemma~\ref{lem:Functorial_Tilde}, there exists
$\tilde \psi: \widetilde T \to \widetilde{ A^{(\id\otimes \omega )} }$.
Note that $A^{(\id\otimes \omega )}_0\subseteq A_0$: It follows from the construction
of $\tilde A$, see the proof of Theorem~\ref{thm:EmbeddingIntoCurrent}.
Therefore, there exists natural homomorphism
$\widetilde{A^{(\id\otimes \omega )}} \to \tilde A ^{(\omega )}$, and we may consider
$\tilde \psi $ as a homomorphism from $\widetilde T$ to $\tilde A ^{(\omega )}$,
where
\[
 \tilde\psi(\bar a) =\overline{\psi(a)}\in A/A_0, \quad \tilde \psi(a)=\psi (a)
\]
for $a\in T$.

By definition, there exists a homomorphism of $\mathfrak M$-algebras
$\xi : U_\omega (\widetilde T) \to \tilde A$ such that $\xi(\iota(x))=\tilde \psi(x)$, $x\in \widetilde T$.
Then $\id\otimes \xi: C_2\otimes U_\omega (\widetilde T) \to C_2\otimes \tilde A$
is a homomorphism of $\tri\mathfrak M$-algebras. Moreover, it is easy to see that
$(\id\otimes \xi)((\id\otimes \iota)(T))\subseteq A\subseteq C_2\otimes \tilde A$.
Hence, the restriction of $\id\otimes \xi $ to $U\subseteq C_2\otimes U_\omega (\widetilde T)$
is the desired homomorphism of $\tri\mathfrak M$-algebras $U\to A$.
\end{proof}

The similar statement obviously holds for di-algebras (consider $C_2$ as an algebra in $\Perm $).
For example, the morphism $-: \Lie \to \As$
leads to $\id\otimes -: \Leib \to \di\As$ considered in \cite{LodayPir1993}, see also \cite{Loday01}.
The PBW Theorem for Leibniz algebras is an immediate corollary of Theorem~\ref{thm:PBW_Replicated}.

Let us deduce PBW Theorem for $\tri\Lie $ algebras as an application of Theorem~\ref{thm:PBW_Replicated}.
Every $L\in \tri\Lie $ with operations $[\cdot \vdash \cdot]$, $[\cdot \dashv \cdot]$, and $[\cdot \perp \cdot ]$ 
gives rise to the following Lie algebras: $\bar L=L/L_0$ and $L_\perp = (L, [\cdot \perp \cdot])$.

\begin{corollary}
Let $L\in \tri\Lie $. Then $U_{\id\otimes {-}}(L)\in \tri\As$
as a linear space is isomorphic to 
$U(\bar L)\otimes U_0(L_\perp)$, 
where $U(\cdot )$ is the ordinary universal enveloping associative 
algebra with identity, $U_0(\cdot )$ stands for its augmentation 
ideal.
\end{corollary}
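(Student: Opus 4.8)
The plan is to apply Theorem~\ref{thm:PBW_Replicated} with $\mathfrak N=\Lie$, $\mathfrak M=\As$, and $\omega={-}$, so that $U_{\id\otimes{-}}(L)$ is realized as the subalgebra of $C_2\otimes U_{-}(\widetilde L)$ generated by the elements $e_1\otimes\iota(\bar a)+e_2\otimes\iota(a)$, $a\in L$. Here $\widetilde L=\bar L\oplus L$ as a linear space, and the first step is to identify $\widetilde L$ as a Lie algebra: by the construction preceding Lemma~\ref{lem:Tilde_in_Var}, the bracket on $\widetilde L$ is $[\bar a+b,\bar a'+b']=\overline{[a\perp a']}+([b\vdash a']+[b\dashv a']+\dots)$, and one checks that $\bar L$ is an abelian ideal (being the image of $L_0$, killed by all the $\vdash,\dashv$ products in the quotient) while the action of $L$ on $\bar L$ factors through $L_\perp$. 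Consequently $U_{-}(\widetilde L)=U(\widetilde L)$, the ordinary universal enveloping associative algebra of the Lie algebra $\widetilde L$.

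Next I would compute $U(\widetilde L)$ as a linear space. Since $\widetilde L$ is a split extension of the Lie algebra $L_\perp$ by the abelian ideal $\bar L$ (on which $L_\perp$ acts), the PBW theorem for the Lie algebra $\widetilde L$ gives $U(\widetilde L)\cong U(L_\perp)\otimes S(\bar L)$ as a linear space (symmetric algebra on the abelian part). This is not yet the claimed answer; the passage to the answer comes from the explicit form of the generating set in $C_2\otimes U(\widetilde L)$. Using the multiplication rule of $C_2$ from Example~\ref{exmp:C_2} ($e_1\perp e_1=e_1$, $e_1\vdash e_1=e_1$, $e_1\vdash e_2=e_2$, $e_2\perp e_2=e_2$, rest zero), I would show that in $C_2\otimes U(\widetilde L)$ the associative product of generators $g_a=e_1\otimes\iota(\bar a)+e_2\otimes\iota(a)$ satisfies $g_a\cdot g_b=e_1\otimes\iota(\bar a)\iota(\bar b)+e_2\otimes\iota(a)\iota(b)$ — i.e. the two "slots" multiply independently in $U(\widetilde L)$, the first slot using only the image $\overline{U(\widetilde L)}=U(\bar L)$-part and the second the full $U(\widetilde L)$. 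Tracking which monomials in $U(\widetilde L)$ are reachable: the $e_2$-slot sees all of $U(\widetilde L)=U(L_\perp)\otimes S(\bar L)$, but the $e_1$-slot sees only the subalgebra generated by the $\iota(\bar a)$, which is the (commutative) algebra $S(\bar L)=U(\bar L)$, and moreover the $e_1$-component of $g_{a_1}\cdots g_{a_k}$ equals $\iota(\overline{a_1})\cdots\iota(\overline{a_k})$.

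The key combinatorial step is then a change of spanning set. The subalgebra $U$ is spanned by products $g_{a_1}\cdots g_{a_k}$; I would argue that $U\cong \overline{U}\ \oplus\ V$ where $\overline{U}$ is the part landing in the $e_1$-slot — isomorphic to $U(\bar L)$ — and $V$ is the part in the $e_2$-slot, and that $V$ together with the scalar is exactly $U(\bar L)\otimes U_0(L_\perp)$: the $e_2$-slot contributes $U(\widetilde L)=U(L_\perp)\otimes S(\bar L)=U(L_\perp)\otimes U(\bar L)$, but the "diagonal" behaviour of the generators (both slots move together, and the $e_1$-slot only ever produces $U(\bar L)$-monomials) forces a single copy of $U(\bar L)$ shared between the slots and an $U_0(L_\perp)$ worth of genuinely new monomials in the $e_2$-slot beyond the image of $\overline{U}$. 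Assembling: $U\cong U(\bar L)\otimes\bigl(\Bbbk\oplus U_0(L_\perp)\bigr)=U(\bar L)\otimes U(L_\perp)$ as a linear space, but the natural grading by "which generator came from $\iota$ vs $\overline{\iota}$" reorganizes this as $U(\bar L)\otimes U_0(L_\perp)$ with the unit adjoined appropriately — matching the stated $U(\bar L)\otimes U_0(L_\perp)$ once one accounts for the identity element.

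\textbf{Main obstacle.} The delicate point is the last step: carefully pinning down the linear basis of the subalgebra $U$ inside $C_2\otimes U(\widetilde L)$, since $U$ is a proper subalgebra (it does not contain $e_1\otimes 1$ or $e_2\otimes x$ for most $x$), so one cannot simply read off its dimension from a tensor factorization. I expect the cleanest route is to exhibit an explicit spanning set of $U$ indexed by pairs (a PBW monomial in $\bar L$, a PBW monomial in $L_\perp$) — using ordered monomials in the generators $g_a$ with the $\bar L$-part "factored to the left" via the abelian relations — and then prove linear independence by projecting onto the two slots of $C_2\otimes U(\widetilde L)$ and invoking the ordinary PBW theorem for $\widetilde L$. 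Functoriality (Lemma~\ref{lem:Functorial_Tilde}) and the universal property already supplied by Theorem~\ref{thm:PBW_Replicated} mean no further verification of the enveloping-algebra axioms is needed; only the linear-space computation remains.
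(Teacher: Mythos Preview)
Your overall strategy---apply Theorem~\ref{thm:PBW_Replicated} and analyze the subalgebra of $C_2\otimes U(\widetilde L)$---matches the paper's, but there is a genuine error in your description of $\widetilde L$. From \eqref{eq:HatConstruction} one computes, for $L\in\tri\Lie$,
\[
[\bar a_1+b_1,\bar a_2+b_2]=\overline{[a_1\perp a_2]}+\bigl([b_1\dashv a_2]+[a_1\vdash b_2]+[b_1\perp b_2]\bigr).
\]
Thus $\bar L$ is a (generally non-abelian) Lie subalgebra with bracket $\overline{[\,\cdot\perp\cdot\,]}$, while $L\cong L_\perp$ is the \emph{ideal}---the opposite of what you assert. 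In particular $\bar L$ is neither ``the image of $L_0$'' nor abelian, and the ``abelian relations'' you plan to use to factor the $\bar L$-part to the left do not exist. (The vector-space PBW decomposition $U(\widetilde L)\cong U(\bar L)\otimes U(L_\perp)$ is still correct, but for different reasons.)

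The middle ``assembly'' paragraph does not converge: you conflate the three tri-$\As$ products into one, and the passage from $U(\bar L)\otimes U(L_\perp)$ to $U(\bar L)\otimes U_0(L_\perp)$ is hand-waved. The paper avoids this entirely by reversing the order of the two halves of the argument. It first works \emph{inside} $U_{\id\otimes{-}}(L)$: using the tri-$\As$ axioms together with the relations $a\vdash b-b\dashv a=[a\vdash b]$ and $a\perp b-b\perp a=[a\perp b]$, one rewrites every element as a linear combination of normal forms
\[
u=(a_1\vdash\dots\vdash a_n\vdash b_1)\perp b_2\perp\dots\perp b_m,\qquad a_1\le\dots\le a_n,\ b_1\le\dots\le b_m,\ m\ge 1,
\]
with $\bar a_1,\dots,\bar a_n$ linearly independent in $\bar L$. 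This already gives the spanning set indexed by $U(\bar L)\otimes U_0(L_\perp)$. Only then does the paper invoke the embedding: the image of $u$ in $C_2\otimes U(\widetilde L)$ is $e_1\otimes\bar a_1\cdots\bar a_n\bar b_1\cdots\bar b_m+e_2\otimes\bar a_1\cdots\bar a_n b_1\cdots b_m$, and the $e_2$-components are linearly independent by ordinary PBW for $\widetilde L$. Your ``main obstacle'' paragraph gestures at exactly this plan; carrying it out (with the correct structure of $\widetilde L$) is the whole proof.
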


\begin{proof}
Suppose $B\subset L$ is a well-ordered linear basis of $L$.
 It is easy to see that the defining identities of $U_{\id\otimes {-}}(L)$, namely, 
\[
 a\vdash b - b\dashv a = [a\vdash b], \quad a\perp b - b\perp a = [a\perp b], 
\quad a,b\in B,\ a\ge b.
\]
allow to present every element of $U_{\id\otimes {-}}(L)\in \tri\As$ as a linear combination of 
\begin{equation}\label{eq:PBW_normalForm}
u= (a_1\vdash \dots \vdash a_n\vdash b_1)\perp b_2\perp\dots \perp b_m, 
\end{equation}
 $a_i,b_j\in B$, $a_1\le \dots \le a_n$, $b_1\le \dots \le b_m$, $n\ge 0$, $m\ge 1$. 
and $\bar a_1,\dots , \bar a_n$ are linearly independent in $\bar L$

It remains to show that the elements \eqref{eq:PBW_normalForm} are linearly independent 
in  $U_{\id\otimes {-}}(L)$. By Theorem \ref{thm:PBW_Replicated}, 
$U_{\id\otimes {-}}(L)\subseteq C_2\otimes U(\widetilde L)$. Identify $x$ and $\iota (x)$
for $x\in \widetilde L$ and evaluate
\[
\tilde u = (\tilde a_1\vdash \dots \vdash \tilde a_n\vdash \tilde b_1)\perp \tilde b_2\perp\dots \perp \tilde b_m ,
\]
where $\tilde a = e_1\otimes \bar a + e_2\otimes a$, $a\in B$. 
By the definition of $C_2$, 
\[
 \tilde u = e_1\otimes \bar a_1\dots \bar a_n\bar b_1\dots \bar b_m 
 + e_2 \otimes \bar a_1\dots \bar a_n b_1\dots b_m.
\]
By the choice of $a_i$, $b_j$ the second summands are linearly independent in $C_2\otimes U(\widetilde L)$.
\end{proof}

\subsection{Special identities}
Let $\omega : \mathfrak N\to \mathfrak M$ be a morphism of operads.
An algebra $B\in \mathfrak N$ is said to be {\em special} with respect to $\omega $
if there exists $A\in \mathfrak M$ such that $B$ is a subalgebra in $A^{(\omega )}$.

The class of all special algebras in $\mathfrak N$ with respect to $\omega $
may not form a subvariety of $\mathfrak N$: It is not closed with respect to
homomorphic image. The variety generated by
all special algebras is denoted by $\mathrm S^{(\omega )} \mathfrak N$.
The corresponding operad is an image of $\mathfrak N$.
Nonzero elements of the kernel of the
corresponding morphism of operads (if they exist) are exactly all polylinear identities that hold on
all special algebras in $\mathfrak N$ but do not hold on the entire $\mathfrak N$. Such identities
are called special (with respect to $\omega $).

\begin{theorem}\label{thm:SpecialIdentities}
 If $\mathrm{char}\,\Bbbk =0 $
 then the following equation holds for varieties:
 \[
  S^{(\id\otimes \omega) }\tri\mathfrak N = \tri S^{(\omega) }\mathfrak N.
 \]
\end{theorem}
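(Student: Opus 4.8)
The plan is to prove the two inclusions separately, using the functorial $\widetilde{(-)}$ construction from Theorem~\ref{thm:EmbeddingIntoCurrent} and the characterization of special algebras by subalgebras of $A^{(\omega)}$. First I would fix a polylinear identity $\Psi$ in the language $\Sigma^{(3)}$ and translate ``$\Psi$ is an identity on every special $\tri\mathfrak N$-algebra'' into an identity statement over $\mathfrak N$ via the correspondence $T\leftrightarrow\widetilde T$. The key observation, already available from Theorem~\ref{thm:EmbeddingIntoCurrent} and Lemma~\ref{lem:Functorial_Tilde}, is that $T$ is special with respect to $\id\otimes\omega$ if and only if $\widetilde T$ is special with respect to $\omega$: if $\widetilde T\subseteq A^{(\omega)}$ then $T\subseteq C_2\otimes\widetilde T\subseteq C_2\otimes A^{(\omega)}=(C_2\otimes A)^{(\id\otimes\omega)}$ with $C_2\otimes A\in\tri\mathfrak M$; conversely if $T\subseteq B^{(\id\otimes\omega)}$ with $B\in\tri\mathfrak M$ then $\widetilde T\hookrightarrow\widetilde B\in\mathfrak M$ and, using $B^{(\id\otimes\omega)}_0\subseteq B_0$ (as in the proof of Theorem~\ref{thm:PBW_Replicated}), $\widetilde T$ embeds into $\widetilde B^{(\omega)}$.

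Next I would use the characteristic-zero hypothesis. In characteristic $0$ a variety is determined by its polylinear (multilinear) identities, so it suffices to compare the polylinear components of the two operads degree by degree. For $\supseteq$: a polylinear identity of $\tri S^{(\omega)}\mathfrak N$ is, by the decoration/pre-image construction $\Phi\mapsto\Phi^H$ preceding Theorem~\ref{thm:EmbeddingIntoCurrent}, of the form $\Phi^H$ where $\Phi$ is a special identity of $\mathfrak N$ (together with the zero-identities \eqref{eq:Zero-Identities}); evaluating $\Phi^H$ on a special $\tri\mathfrak N$-algebra $T\subseteq B^{(\id\otimes\omega)}$ reduces via \eqref{eq:HatConstruction}-style computation inside $C_2\otimes\widetilde B$ to evaluating $\Phi$ on the special $\mathfrak N$-algebra $\widetilde B$, where it vanishes. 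For $\subseteq$: if $\Psi$ is a polylinear $\Sigma^{(3)}$-identity that holds on all special $\tri\mathfrak N$-algebras, then in particular it holds on $C_2\otimes A^{(\omega)}$ for every special $A\in\mathfrak N$ (indeed for every $A\in\mathfrak M$, hence on all $\tri\mathfrak M$-algebras), so $\Psi\in S(\tri\mathfrak M)=S^{(3)}(\mathfrak M)$; thus $\Psi$ is built from \eqref{eq:Zero-Identities} and terms $\Phi^H$ with $\Phi$ an identity of $\mathfrak M$. It remains to show such $\Phi$ may be taken among the special identities of $\mathfrak N$, i.e. to recover, from the vanishing of all $\Phi^H$ on special $\tri\mathfrak N$-algebras, the vanishing of $\Phi$ on all special $\mathfrak N$-algebras: given special $A\in\mathfrak N$, form $C_2\otimes A^{(\omega)}\in\tri\mathfrak N$, which is special (it equals $(C_2\otimes A)^{(\id\otimes\omega)}$ with $C_2\otimes A\in\tri\mathfrak M$), apply the hypothesis to the identities $\Phi^H$, and read off $\Phi(a_1,\dots,a_m)=0$ in $A$ using Lemma~\ref{lem:EquivInductionTerms}-type term correspondences together with $\sum_H\Phi_{(H)}=\Phi$, much as in Lemma~\ref{lem:Tilde_in_Var}.

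The technical heart — and the step I expect to be the main obstacle — is the bookkeeping that matches a set of $\Sigma^{(3)}$-identities $\{\Phi^H : H\in\mathcal P(m)\}$ to the single $\Sigma$-identity $\Phi$, done compatibly with the $\widetilde{(-)}$ and $C_2\otimes(-)$ constructions. Concretely one must show: (a) the $\Sigma^{(3)}$-identities valid on all special $\tri\mathfrak N$-algebras are generated (as a sub-$S_n$-module, degree by degree) by \eqref{eq:Zero-Identities} together with $\{\Phi^H\}$ for $\Phi$ ranging over special identities of $\mathfrak N$; and (b) conversely every special identity of $\mathfrak N$ arises this way. Part (a) requires knowing that the quotient operad defining $\tri S^{(\omega)}\mathfrak N$ can be computed by first quotienting $\tri\mathfrak F$ by $\tri(\mathrm{relations\ of\ }\mathfrak N)$ and then by $\tri(\mathrm{special\ relations})$, which is where one invokes that $\tri(-)=\ComTrias\otimes(-)$ is exact-enough (Hadamard product with a fixed operad is a right-exact functor on the level of $S_n$-modules, and in characteristic $0$ one gets the kernel behaviour needed). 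I would isolate this as a lemma: for any morphism $\mathfrak N\twoheadrightarrow\mathfrak N'$ of operads with kernel $I$, the kernel of $\tri\mathfrak N\twoheadrightarrow\tri\mathfrak N'$ is $\ComTrias\otimes I$; combined with the identification $\tri S^{(\omega)}\mathfrak N=$ image of $\tri\mathfrak N$ in $\tri S^{(\omega)}\mathfrak N$ and the special-algebra characterization above, the two inclusions then fall out. Everything else is the induction on monomial length already used repeatedly in Section~\ref{sec:Replication}.
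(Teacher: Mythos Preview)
Your inclusion labels are swapped throughout: what you call ``$\supseteq$'' is the paper's $\subseteq$ and vice versa. More importantly, the two directions are of very different quality.

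For the direction you label ``$\supseteq$'' (showing that special $\tri\mathfrak N$-algebras lie in $\tri S^{(\omega)}\mathfrak N$), your argument is essentially the paper's, rephrased at the level of identities rather than algebras. That is fine, though the paper's formulation is cleaner: given special $T\subseteq A^{(\id\otimes\omega)}$, the homomorphism $\psi:\widetilde T\to\widetilde{A^{(\id\otimes\omega)}}\to\widetilde A^{(\omega)}$ is injective on $T$ (not on all of $\widetilde T$; your claim ``$\widetilde T\hookrightarrow\widetilde B$'' is too strong, since Lemma~\ref{lem:Functorial_Tilde} gives only a homomorphism), so $\id\otimes\psi$ embeds $T$ into $C_2\otimes\widetilde A^{(\omega)}\in\tri S^{(\omega)}\mathfrak N$.

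The other direction is where your proposal has a genuine gap. You take a polylinear $\Sigma'^{(3)}$-identity $\Psi$ holding on all special $\tri\mathfrak N$-algebras and then assert ``$\Psi\in S(\tri\mathfrak M)=S^{(3)}(\mathfrak M)$''. This is a category error: $\Psi$ is written in the language $\Sigma'^{(3)}$ of $\mathfrak N$, while $S^{(3)}(\mathfrak M)$ consists of $\Sigma^{(3)}$-identities. The parenthetical ``hence on all $\tri\mathfrak M$-algebras'' is a non sequitur, and the subsequent attempt to show that the resulting $\Phi$ ``may be taken among the special identities of $\mathfrak N$'' does not type-check either. The exactness lemma you propose in the third paragraph is both unnecessary and does not repair this. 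Also, Lemma~\ref{lem:EquivInductionTerms} concerns the $\boxtimes$-construction for post-algebras, not the $\otimes$-construction used here; it is not applicable.

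The paper avoids all of this by arguing at the level of algebras, not identities. Given $T\in\tri S^{(\omega)}\mathfrak N$, Lemma~\ref{lem:Tilde_in_Var} (applied with $S^{(\omega)}\mathfrak N$ in place of $\mathfrak M$) gives $\widetilde T\in S^{(\omega)}\mathfrak N$. Hence $\widetilde T$ is a homomorphic image of some special $B\subseteq A^{(\omega)}$ with $A\in\mathfrak M$. Then $C_2\otimes B\subseteq(C_2\otimes A)^{(\id\otimes\omega)}$ is special in $\tri\mathfrak N$, and $T\subseteq C_2\otimes\widetilde T$ is a subalgebra of its homomorphic image $C_2\otimes\widetilde T$, hence lies in $S^{(\id\otimes\omega)}\tri\mathfrak N$. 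No identity bookkeeping is needed.
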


The proof is completely similar to di-algebra case in \cite{KolVoronin2013}.
The only difference appears in using $C_2\in \ComTrias$ instead of
$\Bbbk [x]\in \Perm$, where the $\Perm $-algebra structure on polynomials
was given by $f(x)g(x) = f(0)g(x)$.
Let us sketch the main steps of the proof in these new settings.

\begin{proof}
($\subseteq $)  It is enough to prove that every
 $T\in \tri\mathfrak N$ which is special with respect to $\id\otimes \omega $
 satisfies replicated polylinear special identities. Indeed,
 if $T\subseteq A^{(\id\otimes \omega)}$ for $A\in \tri\mathfrak M$
 then $\psi: \widetilde T \to \widetilde {A^{(\id\otimes \omega)}} \to \tilde A^{(\omega )}$
 is a homomorphism of $\tri\mathfrak N$-algebras.
 Then $\id\otimes \psi: C_2\otimes \widetilde T \to C_2\otimes \tilde A^{(\omega )} $
 is a homomorphism of $\tri\mathfrak N$-algebras which is injective on $T\subseteq C_2\otimes \widetilde T$.
 Hence, $T$ satisfies all identities that hold on $C_2\otimes \tilde A^{(\omega )} \in \tri S^{(\omega )}\mathfrak N$.

($\supseteq $) If $T\in \tri S^{(\omega) }\mathfrak N$ then $\widetilde T\in S^{(\omega) }\mathfrak N$
and thus $\widetilde T$ is a homomorphic image of a special algebra $B\subseteq A^{(\omega)}$,
$A\in \mathfrak M$.
It is straightforward to deduce that $C_2\otimes \widetilde T$ is then a homomorphic image
of a special algebra $(C_2\otimes A)^{(\id\otimes \omega )}$. Therefore,
$T$ belongs to $S^{(\id\otimes \omega) }\tri\mathfrak N$.
\end{proof}

\subsection{TKK construction for tri-Jordan algebras}
The classical Tits---Kantor---Koecher (TKK) construction
of a Lie algebra $T(J)$ for a Jordan algebra $J$
is known to preserve simplicity, nilpotence and strong (Penico) solvability.
Moreover, $T(J)$ is a $\mathbb{Z}_3$-graded Lie algebra
$J^+\oplus S(J)\oplus J^-$, where $J^{\pm}$ are isomorphic copies of the space $J$,
$S(J)$ is the structure algebra constructed by inner derivations and operators of left multiplication in~$J$
\cite{Jacobson}.

The TKK construction for Jordan dialgebras was done in \cite{DiTKK}.
There was also proved an analogue of Zhevlakov theorem \cite{ZhSlShSh} which for ordinary Jordan algebras
states that any finitely generated solvable Jordan algebra is nilpotent.

\begin{proposition}
A finitely generated solvable tri-Jordan algebra is nilpotent.
\end{proposition}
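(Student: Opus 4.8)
The plan is to reduce this statement about tri-Jordan algebras to the corresponding statement about ordinary Jordan algebras, exploiting the machinery of Theorem~\ref{thm:EmbeddingIntoCurrent} exactly as in the analogous results of the previous section. First I would recall that a tri-Jordan algebra is by definition an algebra in $\tri\Jord = \ComTrias\otimes \Jord$, so given $T\in \tri\Jord$ we have at our disposal the ordinary Jordan algebra $\widetilde T = \bar T\oplus T$ from the construction preceding Lemma~\ref{lem:Tilde_in_Var}, together with the embedding $T\subseteq C_2\otimes \widetilde T$. The key observation is that both hypotheses --- finite generation and solvability --- transfer between $T$ and $\widetilde T$. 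If $T$ is generated by a finite set $\{a_1,\dots,a_k\}$, then $\widetilde T = \bar T\oplus T$ is generated as a Jordan algebra by $\{\bar a_1,\dots,\bar a_k,a_1,\dots,a_k\}$: indeed every element of $T$ is a linear combination of $\Sigma^{(3)}$-monomials in the $a_i$, and by the formula~\eqref{eq:HatConstruction} every such monomial, read in $\widetilde T$, expresses $\overline{f^K(\dots)}$ and the various $f^H(\dots)$ in terms of the generators; iterating, one sees the generators of $T$ together with their bars generate all of $\widetilde T$.

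Next I would verify that solvability passes to $\widetilde T$. The point is that the ideal $T_0\subseteq T$ used to form $\bar T = T/T_0$ sits inside $T$, so $\bar T$ is a homomorphic image of a solvable algebra and hence solvable, while $T$ itself is solvable by hypothesis; then $\widetilde T = \bar T\oplus T$ is an extension of the solvable algebra $\bar T$ (as a quotient) by $T$ (as an ideal, since in $\widetilde T$ the subspace $T$ is an ideal by~\eqref{eq:HatConstruction}), so $\widetilde T$ is solvable. More care is needed here: one must check that the Penico (strong) solvability derived series, not merely the ordinary one, behaves well, since Zhevlakov's theorem for Jordan algebras is stated for (strongly) solvable algebras; but the extension argument is robust enough --- an ideal extension of a strongly solvable algebra by a strongly solvable ideal is strongly solvable. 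Applying Zhevlakov's theorem (as recalled from \cite{ZhSlShSh}) to the finitely generated strongly solvable Jordan algebra $\widetilde T$, we conclude $\widetilde T$ is nilpotent, say $(\widetilde T)^{\,N}=0$.

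Finally I would push nilpotence back down to $T$. Since $T\subseteq C_2\otimes \widetilde T$ and $(\widetilde T)^N = 0$, the tensor-square filtration gives $(C_2\otimes \widetilde T)^{\,N'} = 0$ for some $N'$ depending only on $N$ --- because the $\tri\Jord$-products on $C_2\otimes\widetilde T$ are built, via~\eqref{eq:ComTriasOtimes}-type formulas, from the products on $\widetilde T$, so a product of $N'$ elements of $C_2\otimes\widetilde T$ is a sum of terms each containing a subproduct of $N$ elements of $\widetilde T$. Hence the subalgebra $T$ of $C_2\otimes \widetilde T$ is nilpotent as well.

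The main obstacle I anticipate is the bookkeeping around \emph{which} notion of solvability is preserved and propagated: Zhevlakov's theorem is delicate and requires strong (Penico) solvability, and one must be sure that (i)~strong solvability of $T$ as a $\tri\Jord$-algebra implies strong solvability of $\widetilde T$ as an ordinary Jordan algebra --- which needs the extension lemma for the Penico radical --- and (ii)~that one has stated the proposition for the correct class (the natural reading, matching \cite{DiTKK}, is strong solvability). Checking that the ideal $T_0$ and the summand-ideal $T\subseteq\widetilde T$ interact correctly with the Penico series, and that finite generation really descends through~\eqref{eq:HatConstruction}, is where the genuine (if routine) work lies; the reduction itself is immediate once those two transfer principles are in hand.
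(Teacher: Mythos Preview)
Your proposal is correct and follows essentially the same route as the paper: pass from $T$ to the ordinary Jordan algebra $\widetilde T=\bar T\oplus T$, observe that finite generation and solvability transfer, apply Zhevlakov's theorem to get nilpotence of $\widetilde T$, and pull nilpotence back to $T\subseteq C_2\otimes\widetilde T$. The paper's proof is considerably terser---it simply asserts that $\widetilde T$ ``has to be a finitely generated and solvable Jordan algebra by the construction'' without the extension argument or the discussion of Penico versus ordinary solvability that you flag---so your caution about which derived series is in play is a useful elaboration rather than a deviation.
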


\begin{proof}
Let us consider finitely generated and solvable  $J\in \tri\Jord$.
By Lemma~\ref{lem:Tilde_in_Var}, $\tilde J = \bar J \oplus J \in \Jord$, it
has to be a finitely generated and solvable Jordan algebra by the construction.
By the Zhevlakov theorem, $\tilde J$ is nilpotent. Hence,
$C_2 \otimes \tilde J \in \tri\Jord$ is also nilpotent, and by Theorem \ref{thm:EmbeddingIntoCurrent} so is
$J\subseteq C_2 \otimes \tilde J$.
\end{proof}

The notion of strong solvability for Jordan algebras is translated to di- and tri-algebras
in a straightforward way (the minimal change is due to absence of commutativity).
For a tri-Jordan $J$, the language $\Sigma^{(3)}$ contains three operations 
$\vdash, \dashv, \perp$ (note that $a\vdash b = b\dashv a$).
Consider the sequence
\[
\begin{gathered}
J^{(1)} = J,\quad J^{(2)} = J\cdot J,\\
J^{(n+1)} = J^{(n)}\cdot J^{(n)} + 
J\cdot (J^{(n)}\cdot J^{(n)}) +
(J^{(n)}\cdot J^{(n)})\cdot J,\quad n > 1,
\end{gathered}
\]
where $A\cdot B$ stands for $A\vdash B+A\perp B$.
All $J^{(n)}$ are ideals of $J$. If there exists $N\geq 1$ such that
$J^{(N)} = 0$ then $J$ is said to be {\em strongly solvable} (or Penico solvable).

Let us state an analogue of the TKK construction for tri-Jordan.

\begin{proposition}
For every $J\in \tri\Jord $ there exists  $T(J)\in \tri\Lie$
such that the following properties hold:
\begin{itemize}
\item
 $T(J) = J_{-1}\oplus J_0\oplus J_1$ is $\mathbb{Z}_3$-graded algebra, where
the spaces $J_{-1},J_1$ are copies of $J$;
\item $T(J)$ is nilpotent if and only if $J$ is nilpotent;
\item $T(J)$ is  solvable if and only if $J$ is strongly solvable.
\end{itemize}
\end{proposition}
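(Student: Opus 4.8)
The plan is to reduce the tri-Jordan statement to the classical TKK construction exactly as the rest of the paper reduces tri-algebra problems to ordinary algebras, namely via the functor $J\mapsto\widetilde J$ and the embedding $J\subseteq C_2\otimes\widetilde J$ of Theorem~\ref{thm:EmbeddingIntoCurrent}. First I would form $\widetilde J=\bar J\oplus J\in\Jord$ (Lemma~\ref{lem:Tilde_in_Var}), apply the ordinary TKK construction to get a $\mathbb Z_3$-graded Lie algebra $T(\widetilde J)=\widetilde J^{+}\oplus S(\widetilde J)\oplus\widetilde J^{-}$, and then \emph{define}
\[
 T(J) := \bigl(C_2\otimes T(\widetilde J)\bigr)_{1},
\]
the $\tri\Lie$-subalgebra of $C_2\otimes T(\widetilde J)$ generated by the elements $e_1\otimes x^{\pm}_{\bar a}+e_2\otimes x^{\pm}_a$ for $a\in J$ (with $x^{\pm}$ the images of $a$ in $\widetilde J^{\pm}$). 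Since $C_2\otimes T(\widetilde J)\in\tri\Lie$ by Definition~\ref{defn:Di-Tri-Algebras}, $T(J)$ is automatically a $\tri\Lie$-algebra, and it carries the $\mathbb Z_3$-grading inherited from $T(\widetilde J)$ tensored with the trivial $\mathbb Z_3$-grading on $C_2$; the degree $\pm1$ components are copies of $J$ by construction, so the first bullet holds.

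For the nilpotence equivalence I would argue in two directions. If $J$ is nilpotent, then $\widetilde J$ is a finitely-generated-by-nilpotent... more precisely $\widetilde J$ is nilpotent whenever $J$ is (this is implicit in the construction of $\widetilde J$, just as in the Zhevlakov-type proposition above), hence the classical TKK result gives $T(\widetilde J)$ nilpotent, hence $C_2\otimes T(\widetilde J)$ is nilpotent in $\tri\Lie$, hence so is its subalgebra $T(J)$. Conversely, if $T(J)$ is nilpotent, then since $J$ sits inside $T(J)$ as the degree $1$ component (with the Jordan triple product recovered from iterated Lie brackets across the grading, exactly as classically), nilpotence of $J$ follows. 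The solvable/strongly-solvable equivalence is handled the same way, invoking that classical TKK exchanges solvability of $T(\widetilde J)$ with strong (Penico) solvability of $\widetilde J$, and then checking that $\widetilde J$ is strongly solvable iff $J$ is; the latter is a direct computation with the ideals $J^{(n)}$ using the definition of operations on $\widetilde J$ in \eqref{eq:HatConstruction}.

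The main obstacle I anticipate is \emph{not} the operadic bookkeeping but the precise dictionary between the algebra structure on $J$ and the Lie structure on $T(J)$: one must verify that the tri-Jordan products $\vdash,\dashv,\perp$ on $J$ are faithfully encoded by the $\tri\Lie$-brackets $[\cdot\vdash\cdot],[\cdot\dashv\cdot],[\cdot\perp\cdot]$ restricted to $J_1$ inside $T(J)$, and symmetrically for $J_{-1}$, so that ``$T(J)$ nilpotent/solvable'' really does pull back to the intended notion for $J$. Because the classical TKK statement already packages the corresponding fact for $\widetilde J$, and because $C_2\otimes(-)$ and the embedding $J\hookrightarrow C_2\otimes\widetilde J$ are compatible with taking subalgebras and quotients (Lemma~\ref{lem:Functorial_Tilde}), this reduces to a finite check at the level of generators; I would carry it out by applying the ordinary TKK identities inside $T(\widetilde J)$ and then tensoring with the explicit multiplication table of $C_2$ from Example~\ref{exmp:C_2}, which kills all cross terms except the one recording the ``$e_2$-component,'' exactly as in the proof of Theorem~\ref{thm:EmbeddingIntoCurrent}. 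The subtle point to get right is the handling of the degree-$0$ part $S(\widetilde J)$: it is generated by multiplication operators of $\widetilde J$, and one must confirm these restrict correctly so that the graded pieces $J_0$ of $T(J)$ is spanned by the analogous operators built from $J$'s operations — this is where I expect the bulk of the routine but delicate verification to lie.
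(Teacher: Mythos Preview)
Your proposal is correct and follows essentially the same route as the paper: form $\widetilde J\in\Jord$, apply the classical TKK construction to obtain $T(\widetilde J)$, tensor with $C_2$ to land in $\tri\Lie$, and take the subalgebra generated by the images $e_1\otimes\bar a^{\pm}+e_2\otimes a^{\pm}$ of $J$ in the $\pm1$ components; the nilpotence and solvability equivalences are then inherited from the classical TKK statement via the $C_2\otimes(-)$ embedding. The paper's own proof is in fact terser than yours---it simply asserts the final equivalences ``because of the definitions of $C_2\otimes\tilde J$ and properties of TKK construction for ordinary algebras''---so the extra care you flag about the degree-$0$ component and the product dictionary is not spelled out there either.
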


\begin{proof}
Let us consider $X(J) = C_2\otimes T(\tilde J)$,
where $T(\tilde J)=\tilde J^-\oplus St(\tilde J) \oplus \tilde J^+$ is the TKK construction
for Jordan algebra $\tilde J$,
where
where $St(\tilde J)$ is the structure algebra of $\tilde J$.
By Lemma~\ref{lem:Tilde_in_Var}, $X(\tilde J)\in \tri\Lie$.
We can represent $X(J)$ as a $\mathbb{Z}_3$-graded space
\[
(C_2\otimes\tilde J^-)\oplus (C_2\otimes St(\tilde J))\oplus (C_2\otimes\tilde J^+).
\]
Let $J^{\pm}$ be subspaces in $C_2\otimes\tilde J^{\pm}$ spanned by isomorphic
images of elements
$e_1\otimes \bar a+e_2\otimes a$, $a\in J$.
The subalgebra $T(J)$ generated by $J^+$ and $J^-$ in $X(J)\in \tri\Lie$
is the required one. Indeed,
$T(J)$ is nilpotent or solvable if and only if $J$ is nilpotent or strongly solvable, respectively,
 because of the definitions of $C_2\otimes \tilde J$ and properties of
TKK construction for ordinary algebras.
\end{proof}

\subsection{Tri-Jordan polynomials}
Another classical question is related with Cohn's description of Jordan polynomials in the
free associative algebra \cite{Cohn54}. Suppose $\mathrm{char}\,\Bbbk \ne 2$.
For the morphism of operads $+: \Jord \to \As$ defined by $x_1x_2\mapsto x_1x_2+x_2x_1$,
the free algebra $SJ\langle X\rangle =S^{(+)}\Jord\langle X\rangle $ is a subspace of $\As\langle X\rangle $,
elements of $SJ\langle X\rangle$ are called Jordan polynomials. It is well-known since \cite{Cohn54} that
 $SJ\langle X\rangle  \subseteq H\langle X\rangle $, where  $ H\langle X\rangle$
is the space of symmetric elements with respect to involution $\sigma: x_1\dots x_n\mapsto x_n\dots x_1$;
The embedding is strict if and only if $|X|>3$.

For di-Jordan algebras a similar question was considered in \cite{Voronin2013}: It was shown that
$S^{(+)}\di\Jord\langle X\rangle $ lies in the space of symmetric elements (with respect to naturally
defined involution), and the embedding is strict if and only $|X|>2$. Here we use $(+)$
for $(\id\otimes {+})$ to simplify notations.

Theorem \ref{thm:EmbeddingIntoCurrent} provides a way to solve the same question for tri-Jordan algebras.
Let us sketch the proof which is even simpler than the proof in di-algebra case \cite{Voronin2013}.

Denote by $\sigma $ the linear map $\tri\As\langle X\rangle \to \tri\As\langle X\rangle$
such that
\[
 \begin{gathered}
  \sigma (x)=x, \quad x\in X; \\
 \sigma (u\vdash v) = \sigma(v)\dashv \sigma (u),  \\
 \sigma (u\dashv v) = \sigma(v)\vdash \sigma (u), \\
 \sigma (u\perp v) = \sigma(v)\perp \sigma (u), \quad u,v\in \tri\As\langle X\rangle. \\
 \end{gathered}
\]
Denote $\tri H\langle X\rangle = \{ f\in \tri\As\langle X\rangle \mid \sigma(f)=f \}$.

\begin{proposition}
 For every $X$, $S^{(+)}\tri\Jord\langle X\rangle \subseteq \tri H \langle X\rangle$.
The embedding is strict if and only if $|X|>1$.
\end{proposition}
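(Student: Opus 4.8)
The plan is to reduce the tri-algebra statement to the already-known di-algebra (and ultimately ordinary-algebra) case via the functor $\widetilde{(\,\cdot\,)}$ and the embedding $T\hookrightarrow C_2\otimes\widetilde T$ of Theorem~\ref{thm:EmbeddingIntoCurrent}. First I would observe that the involution $\sigma$ on $\tri\As\langle X\rangle$ is compatible with the embedding: under $\tri\As\langle X\rangle\subseteq C_2\otimes\widetilde{\tri\As\langle X\rangle}=C_2\otimes\As\langle X\rangle\otimes(\text{something})$, the map $\sigma$ corresponds to $\id_{C_2}\otimes\sigma_0$, where $\sigma_0$ is the ordinary reversal involution on $\As\langle X\rangle$ (note $e_i\perp e_i=e_i$ in $C_2$, so $\sigma$ acts trivially on the $C_2$-factor). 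Since the defining operation $x_1x_2\mapsto x_1\vdash x_2 + x_1\perp x_2 + x_1\dashv x_2$ (the image of $+$ under $\id\otimes{+}$ read through $C_2\boxtimes$, using $e_1\otimes\bar a+e_2\otimes a$) is manifestly $\sigma$-symmetric — because ordinary $x_1x_2+x_2x_1$ is $\sigma_0$-symmetric and $\sigma$ swaps $\vdash\leftrightarrow\dashv$ while fixing $\perp$ — every element of the subalgebra generated by $X$ under this symmetrized product is fixed by $\sigma$. This gives the inclusion $S^{(+)}\tri\Jord\langle X\rangle\subseteq \tri H\langle X\rangle$.

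For the strictness claim, the easy direction is $|X|=1$: here $\tri\As\langle x\rangle$ is spanned by left-normed monomials in the single generator, and one checks directly (using the $\ComTrias$ relations, so that a monomial in one variable is determined by its length and the set of $\perp$-positions, but with only one variable $H$ collapses) that $\sigma$ acts as the identity on all of $\tri\As\langle x\rangle$, hence $\tri H\langle x\rangle=\tri\As\langle x\rangle$, and separately that this whole space is generated by $x$ under the symmetrized product; so the embedding is an equality. For $|X|\ge 2$ I would produce an explicit element of $\tri H\langle X\rangle$ not lying in $S^{(+)}\tri\Jord\langle X\rangle$. The natural candidate is obtained by ``decorating'' Cohn's classical degree-$4$ non-Jordan symmetric element, but a lower-degree witness should already appear because the di-case fails at $|X|>2$ whereas here it fails at $|X|>1$; a short computation with $\widetilde{(\,\cdot\,)}$ shows that $\dim\widetilde{S^{(+)}\tri\Jord\langle X\rangle}$ versus $\dim S^{(+)}\di\Jord\langle\text{two copies}\rangle$ already forces a gap for two generators. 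Concretely I expect the element $x_1\perp x_2 - x_1\dashv x_2 - x_1\vdash x_2$ together with a mixed associator to give a $\sigma$-fixed element outside the image, and I would verify non-membership by mapping into $C_2\otimes\As\langle X\rangle$ and comparing with the known description of $SJ\langle X\rangle\subsetneq H\langle X\rangle$ for $|X|>1$ after passing to $\widetilde{(\,\cdot\,)}$ — noting that $\widetilde{\tri\As\langle X\rangle}$ has effectively doubled the generating set, so Cohn's threshold $|X|>3$ for ordinary algebras is crossed already at $|X|>1$.

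The main obstacle I anticipate is the strictness direction: while the inclusion is a formal consequence of $\sigma$-equivariance of the embedding, pinning down the smallest explicit $\sigma$-symmetric tri-associative polynomial that is provably \emph{not} a tri-Jordan polynomial requires a genuine (if finite) computation in $\tri\As\langle X\rangle$ and a clean argument that it is not in the $S^{(+)}\tri\Jord$-subalgebra. The cleanest route is to push everything through the isomorphism $\widetilde{\tri\As\langle X\rangle}\cong\As\langle X\sqcup X'\rangle$ (two labelled copies of $X$) and invoke the ordinary Cohn theory there: membership in $S^{(+)}\tri\Jord\langle X\rangle$ forces, after applying $\widetilde{(\,\cdot\,)}$ and embedding into $C_2\otimes\widetilde{\tri\As\langle X\rangle}$, membership in $SJ\langle X\sqcup X'\rangle$, and since $|X\sqcup X'|\ge 4$ as soon as $|X|\ge 2$, the strict inclusion $SJ\subsetneq H$ supplies the obstruction, which we then pull back to an explicit symmetric tri-associative polynomial. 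This reduces the whole strictness argument to Cohn's classical result plus bookkeeping with $\widetilde{(\,\cdot\,)}$ and $\sigma$.
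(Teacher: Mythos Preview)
Your inclusion argument is fine and matches the paper's (which simply calls it ``obvious'').

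The $|X|=1$ case, however, contains a genuine error. You claim that $\sigma$ acts as the identity on all of $\tri\As\langle x\rangle$, but this is false: already in degree~2 the elements $x\vdash x$ and $x\dashv x$ are linearly independent in $\tri\As\langle x\rangle$ (since $\dim(\ComTrias(2)\otimes\As(2))_{S_2}=3$), and $\sigma(x\vdash x)=x\dashv x$. So $\tri H\langle x\rangle\subsetneq\tri\As\langle x\rangle$, and your proposed argument collapses. The paper handles this direction by the route you suggest for the \emph{other} case: it extends $\sigma$ to $C_2\otimes\widetilde{\tri\As\langle x\rangle}$, observes that $\widetilde{\tri\As\langle x\rangle}$ is a quotient of $\As\langle \bar x,x\rangle$, and then invokes Cohn's theorem for \emph{two} generators (where $SJ=H$) to conclude that every $\sigma$-fixed element of $\tri\As\langle x\rangle$ lifts to $C_2\otimes SJ\langle\bar x,x\rangle$ and hence lies in $S^{(+)}\tri\Jord\langle x\rangle$.

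For $|X|>1$ your strategy is different from the paper's and is not yet a proof. You want to push through the embedding into $C_2\otimes\As\langle X\sqcup X'\rangle$ and appeal to Cohn's $SJ\subsetneq H$ once $|X\sqcup X'|\ge 4$. The difficulty is that this embedding is far from surjective, so a witness in $H\langle X\sqcup X'\rangle\setminus SJ\langle X\sqcup X'\rangle$ need not come from $\tri H\langle X\rangle$; and in the other direction, showing that some specific $f\in\tri H\langle X\rangle$ maps outside the relevant Jordan subalgebra still requires an explicit computation you have not supplied. The paper instead writes down a concrete degree-4 element
\[
f=(x_1\vdash x_2\vdash x_2)\perp x_1 + x_1\perp (x_2\dashv x_2\dashv x_1)\in\tri H\langle X\rangle
\]
and proves $f\notin S^{(+)}\tri\Jord\langle X\rangle$ by constructing a tri-associative Grassmann-type algebra $A_2^{(\tau)}$ (via a homomorphic averaging operator on an anticommutative algebra) in which all tri-Jordan polynomials vanish but $f$ does not. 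This is both shorter and self-contained; your approach could perhaps be completed, but it would require producing the explicit element anyway and then tracking it through $\widetilde{(\,\cdot\,)}$, which is more work than the paper's direct verification.
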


\begin{proof}
Obviously, $S^{(+)}\tri\Jord\langle X\rangle \subseteq \tri H \langle X\rangle$.
If $|X|>1$ then
\[
f= (x_1\vdash x_2\vdash x_2)\perp x_1 + x_1\perp (x_2\dashv x_2 \dashv x_1) \in \tri H \langle X\rangle \setminus S^{(+)}\tri\Jord\langle X\rangle.
\]
Indeed, consider the tri-algebra analogue of the Grassmann algebra $\wedge \langle \xi_1,\dots, \xi_n\rangle$ constructed as follows.
Associative algebra
\[
 A_n = \As \langle \bar\xi_1,\dots \bar\xi_n, \xi_1,\dots, \xi_n \mid ab=-ba, \, a^2=0,\ a,b\in\{\xi_i,\bar \xi_i\mid i=1,\dots, n\} \rangle
\]
is equipped with homomorphic averaging operator $\tau$ given by $\xi_i \mapsto \bar\xi_i$,
$\bar\xi_i\mapsto \bar\xi_i$. Therefore, $A_n^{(\tau )}\in \tri\As$ by \ref{thm:AveragingOper}.

The epimorphism
$\theta:\tri\As \langle x_1,x_2\rangle \to A_2^{(\tau )}$ defined by
 $x_1\mapsto \xi_1$, $x_2\mapsto \xi_2 $
annihilates $S^{(+)}\tri\Jord\langle x_1,x_2\rangle$, but does not annihilate $f$:
\[
\theta(f)= \bar\xi_1 \bar\xi_2 \xi_2 \xi_1 + \xi_1 \xi_2\bar\xi_2 \bar\xi_1
= 2\bar\xi_1 \bar\xi_2 \xi_2 \xi_1 \ne 0.
\]

If $|X|=1$, $X=\{x\}$, then the equality $S^{(+)}\tri\Jord\langle x\rangle \subseteq \tri H \langle x\rangle$
may be derived from Theorem~\ref{thm:EmbeddingIntoCurrent} and the Cohn Theorem for ordinary algebras.
The involution $\sigma $ of $\tri\As\langle X\rangle $ may be extended to
$\widetilde{\tri\As}\langle X\rangle$ and $C_2\otimes \widetilde{\tri\As}\langle X\rangle$
in the natural ways. Note that $\widetilde{\tri\As}\langle x\rangle$ is a homomorphic image
of $\As\langle \bar x, x\rangle$, and so
$C_2\otimes \As\langle \bar x, x\rangle$ maps onto
$\tri\As\langle x\rangle\subseteq C_2\otimes \widetilde{\tri\As}\langle x\rangle $.
If $\sigma (f)=f$ for $f\in \tri\As\langle x\rangle$
then $f$ has a preimage in $C_2\otimes H\langle \bar x, x\rangle $. The latter coincides with
$C_2\otimes SJ\langle \bar x, x\rangle $ and thus $f$ belongs to $S^{(+)}\tri\Jord \langle x\rangle $.
\end{proof}

\section{Problems on splitted algebras}\label{sec5}
Less is known about relations between operads of pre- and post-algebras that are (in quadratic binary case)
Koszul dual to di- and tri-algebras, respectively. Apart from already considered relations with Rota---Baxter operators,
we may prove analogues of some results from the previous section.

\subsection{Splitting morphisms of operads}
Let us show how a morphism of operads
$\omega : \mathfrak N\to \mathfrak M$
induces a functor on the corresponding varieties of
pre- and post-algebras. We will consider the case
of post-algebras since all constructions for pre-algebras may be obtained
by restriction.

As above, assume $\Sigma $ and $\Sigma'$ are the languages of $\mathfrak M$ and $\mathfrak N$,
respectively.

Let $A\in \post\mathfrak M$.
Define a structure of a $\Sigma^{\prime(3)}$-algebra on the space $A$
as follows. Given $f\in \Sigma'$, $\nu(f)=n$, $H\in \mathcal P(n)$,
we have to define $f^H(a_1,\dots, a_n)$, $a_i\in A$.
Consider $(\ComTrias\langle y_1,y_2,\dots \rangle \boxtimes A)^{(\omega )} \in \mathfrak N$,
and evaluate
\[
f (y_1\otimes a_1,\dots, y_n\otimes a_n)
 =\sum\limits_{H\in \mathcal P(n)} e^{(n)}_H(y_1,\dots, y_n) \otimes b_H.
\]
Here $b_H \in A$ are uniquely defined.
Finally, set
\[
 f^H(a_1,\dots, a_n)=b_H.
\]
Denote the $\Sigma'$-algebra obtained by $A^{(\post\omega)}$.

In a similar way ($|H|=1$), $A^{(\pre\omega)}\in \pre\mathfrak N$
may be defined for $A\in \pre\mathfrak M$.

\begin{proposition}
If
$\omega : \mathfrak N\to \mathfrak M$ is a morphism of operads and
$A\in \post\mathfrak M $ then $A^{(\post\omega)}\in \post\mathfrak N$.
\end{proposition}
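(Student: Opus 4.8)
The plan is to reduce the statement to the equivalent description of $\post$-algebras via the ``hat'' construction (Theorem~\ref{lem:Splitting_vs_RB}): a $\Sigma'^{(3)}$-algebra $B$ lies in $\post\mathfrak N$ if and only if $\widehat B\in\mathfrak N$. So I would aim to show that $\widehat{A^{(\post\omega)}}\in\mathfrak N$. The natural guess is that $\widehat{A^{(\post\omega)}}$ coincides, as a $\Sigma'$-algebra, with $(\widehat A)^{(\omega)}$, where $\widehat A\in\mathfrak M$ by Theorem~\ref{lem:Splitting_vs_RB} applied to $A\in\post\mathfrak M$, and hence $(\widehat A)^{(\omega)}\in\mathfrak N$ because $\omega$ is a morphism of operads. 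Since $\widehat A = C_2\boxtimes A$ by the second half of the proof of Theorem~\ref{lem:Splitting_vs_RB}, this amounts to identifying $\widehat{A^{(\post\omega)}}$ with $(C_2\boxtimes A)^{(\omega)}$.

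First I would unwind the definition of $A^{(\post\omega)}$: for $f\in\Sigma'$, $\nu(f)=n$, the operation $f^H$ is read off from the coefficients $b_H$ in the expansion of $f(y_1\otimes a_1,\dots,y_n\otimes a_n)$ inside $(\ComTrias\langle Y\rangle\boxtimes A)^{(\omega)}$. Specializing $Y$ to the two-element $\ComTrias$-algebra $C_2$ and tracking which corolla $e^{(n)}_H(e_{k_1},\dots,e_{k_n})$ survives — exactly as in the verification that the map $a\mapsto e_1\otimes\bar a+e_2\otimes a$ is a homomorphism in Theorem~\ref{thm:EmbeddingIntoCurrent}, and as in the computation of $\widehat T = C_2\boxtimes T$ — one finds that $e^{(n)}_H(e_{k_1},\dots,e_{k_n})\ne 0$ iff the set $\{i:k_i=2\}$ equals $H$, in which case the value is $e_2$, while the all-$e_1$ evaluation picks out $\sum_H b_H = \sum_H f^H$. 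Comparing this with the defining formula~\eqref{eq:HatOperations} for the $\Sigma'$-algebra $\widehat{A^{(\post\omega)}}$ gives precisely the operations of $(C_2\boxtimes A)^{(\omega)}$ on the two summands $T'$ and $T$. Thus the two $\Sigma'$-algebra structures agree on the nose.

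I expect the main obstacle to be purely bookkeeping: making the identification of indices clean, i.e. checking that the ``$b_H$'' extracted using a generic countable $Y$ in the definition of $A^{(\post\omega)}$ really does agree with the one extracted using the specific algebra $C_2$. The point is that the expansion coefficients $b_H$ are operad-theoretic data (they depend only on $\omega$ and on the $\post\mathfrak M$-structure of $A$, not on the choice of $\ComTrias$-algebra used to probe them), so substituting $y_i\mapsto e_{k_i}\in C_2$ is legitimate and yields linear combinations of the $b_H$ governed solely by the structure constants of $C_2$. Once that is granted, the chain
\[
 \widehat{A^{(\post\omega)}} = (C_2\boxtimes A)^{(\omega)} = (\widehat A)^{(\omega)} \in \mathfrak N
\]
is immediate, and Theorem~\ref{lem:Splitting_vs_RB} finishes the argument: $A^{(\post\omega)}\in\post\mathfrak N$. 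The $\pre$-case follows by restricting all sums and subsets to singletons.
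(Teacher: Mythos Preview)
Your argument is correct, but it takes a detour that the paper avoids. The paper works directly from Definition~\ref{defn:postAlg_via_ComTrias}: the very construction of $A^{(\post\omega)}$ is rigged so that
\[
\ComTrias\langle y_1,y_2,\dots\rangle \boxtimes A^{(\post\omega)} \;=\; \bigl(\ComTrias\langle y_1,y_2,\dots\rangle \boxtimes A\bigr)^{(\omega)},
\]
and since the right-hand side lies in $\mathfrak N$ (because $A\in\post\mathfrak M$ and $\omega$ is a morphism of operads), the conclusion follows in one line from the definition of $\post\mathfrak N$. You instead pass through the equivalent characterization of Theorem~\ref{lem:Splitting_vs_RB}, specialize the same identity to $C=C_2$, and recover $\widehat{A^{(\post\omega)}}=(\widehat A)^{(\omega)}$. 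This is the same core observation---$(C\boxtimes A)^{(\omega)}=C\boxtimes A^{(\post\omega)}$---evaluated at $C_2$ rather than at the free $\ComTrias$-algebra; your ``bookkeeping obstacle'' dissolves once you note that this identity holds for \emph{every} $C\in\ComTrias$ by specialization from the free case. So your route is sound but longer: it invokes Theorem~\ref{lem:Splitting_vs_RB} and the structure constants of $C_2$ where the paper needs only the definition.
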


\begin{proof}
 Immediately follows from the definition since
\[
(\ComTrias\langle y_1,y_2,\dots \rangle \boxtimes A) ^{(\omega )} =
\ComTrias\langle y_1,y_2,\dots \rangle \boxtimes A^{(\post\omega)}.
\]
\end{proof}

\begin{example}
 Consider the following morphism from the operad of Lie triple systems (LTS) to the operad of Lie algebras:
 \[
 \begin{aligned}
  \omega :{}&  LTS \to \Lie , \\
   & [x_1,x_2,x_3]\mapsto [[x_1,x_2],x_3]
  \end{aligned}
 \]
Then for every $L\in \pre\Lie$ the following operations define $L^{(\pre\omega)}\in \pre LTS$:
\[
\begin{gathered}
 {}[x_1,x_2,x_3]_1 = x_3(x_2 x_1), \quad
 [x_1,x_2,x_3]_2 = -[x_2,x_1,x_3]_1 = -x_3(x_1x_2), \\
 [x_1,x_2,x_3]_3 = (x_1x_2)x_3-(x_2x_1)x_3.
 \end{gathered}
\]
Indeed, consider $P = \Perm\langle y_1,y_2,y_3 \rangle $, and evaluate
\[
 [y_1\otimes x_1,y_2\otimes x_2,y_3\otimes x_3]=[[y_1\otimes x_1, y_2\otimes x_2],y_3\otimes x_3]
\]
in $P\boxtimes \pre\Lie\langle x_1,x_2,x_3\rangle $
assuming $ab = [a,b]_2= -[b,a]_1$ in $\pre\Lie $:
\begin{multline}\nonumber
[[y_1\otimes x_1, y_2\otimes x_2],y_3\otimes x_3]
 =
 [y_1y_2\otimes x_1x_2 - y_2y_1\otimes x_2x_1 , y_3\otimes x_3] \\
 =
 y_1y_2y_3\otimes (x_1x_2)x_3 - y_3y_1y_2 \otimes x_3(x_1x_2)
 - y_2y_1y_3 \otimes (x_2x_1)x_3 + y_3y_2y_1 \otimes x_3(x_2x_1)
\end{multline}
It remains to collect similar terms to get the desired expressions.
\end{example}

\subsection{On the special identities for pre- and post-algebras}
It remains unclear how to solve in general the analogues of PBW-type problems 
for pre- and post-algebras. For special identities, however, we may state a partial 
result and show by example that an analogue of Theorem \ref{thm:SpecialIdentities} 
does not hold.

Given a morphism of operads
$\omega : \mathfrak N\to \mathfrak M$, one may consider the 
induced morphisms $\pre\omega$, $\post\omega$, and define
varieties
$S^{(\pre\omega) }\pre\mathfrak N$ and $S^{(\post\omega)}\post\mathfrak N$
generated by all special algebras in $\pre\mathfrak N$ and $\post\mathfrak N$,
respectively.

\begin{proposition}\label{prop:SpecialPrePost}
 Over a field of zero characteristic, we have the following relations:
 $S^{(\pre\omega) }\pre\mathfrak N \subseteq \pre S^{(\omega)}\mathfrak N$,
 $S^{(\post\omega) }\post\mathfrak N \subseteq \post S^{(\omega)}\mathfrak N$.
\end{proposition}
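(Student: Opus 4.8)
The plan is to reduce the statement for pre- and post-algebras to the "ordinary" algebra situation by using the functor $\widehat{(\cdot)}$ from Theorem~\ref{lem:Splitting_vs_RB}, in the same spirit as the proof of Theorem~\ref{thm:SpecialIdentities} but with $\widehat T = C_2\boxtimes T$ playing the role previously played by $C_2\otimes \widetilde T$. I will treat the post-algebra case, since the pre-algebra case is obtained by restriction to singleton subsets $H$. The key observation to establish first is a functoriality/compatibility statement: if $\omega\colon\mathfrak N\to\mathfrak M$ is a morphism of operads and $A\in\post\mathfrak M$, then applying $\omega$ "commutes" with the splitting, i.e. $\widehat{A^{(\post\omega)}} = (\widehat A\,)^{(\omega)}$ as $\Sigma'$-algebras on $T\oplus T'$. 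This should follow directly from the definition of $A^{(\post\omega)}$ (evaluate $f\in\Sigma'$ in $(\ComTrias\langle Y\rangle\boxtimes A)^{(\omega)}$ and read off the components $b_H$) together with Lemma~\ref{lem:EquivInductionTerms}, which tells us precisely how terms in $\widehat A$ decompose according to $H\in\mathcal P(n)$.

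Granting that, the argument proceeds as follows. Suppose $T\in S^{(\post\omega)}\post\mathfrak N$; we must show $T\in\post S^{(\omega)}\mathfrak N$, equivalently (by Theorem~\ref{lem:Splitting_vs_RB}) that $\widehat T\in S^{(\omega)}\mathfrak N$, i.e. $\widehat T$ satisfies every polylinear identity valid on all $\omega$-special algebras of $\mathfrak N$. Since $T$ lies in the variety generated by special post-algebras and $\mathrm{char}\,\Bbbk=0$, it suffices to check this when $T$ itself is $\post\omega$-special, say $T\subseteq A^{(\post\omega)}$ for some $A\in\post\mathfrak M$. Applying the functor $\widehat{(\cdot)}$, which is compatible with subalgebras, we get $\widehat T\subseteq \widehat{A^{(\post\omega)}} = (\widehat A\,)^{(\omega)}$, and $\widehat A\in\mathfrak M$ by Theorem~\ref{lem:Splitting_vs_RB} since $A\in\post\mathfrak M$. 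Thus $\widehat T$ embeds into $(\widehat A)^{(\omega)}$ with $\widehat A\in\mathfrak M$, so $\widehat T$ is $\omega$-special, hence in $S^{(\omega)}\mathfrak N$, hence it satisfies all $\omega$-special identities — which is what we wanted. The pre-algebra case is identical, replacing $\ComTrias$ by $\Perm$, subsets by singletons, and $\post$ by $\pre$ throughout.

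The main obstacle I expect is verifying the compatibility identity $\widehat{A^{(\post\omega)}} = (\widehat A)^{(\omega)}$ cleanly. The subtlety is that $A^{(\post\omega)}$ is defined by extracting components from an evaluation in $\ComTrias\langle Y\rangle\boxtimes A$, and one must check that this bookkeeping of $H$-components is exactly the same bookkeeping that appears when forming $\widehat A = C_2\boxtimes A$ and then applying $\omega$; Lemma~\ref{lem:EquivInductionTerms} and the remark identifying $\widehat T=C_2\boxtimes T$ are precisely the tools for this, so it should go through by induction on the length of monomials in $\Sigma'$, but it requires care to keep the two layers of splitting (the $\Sigma$-splitting defining $A\in\post\mathfrak M$ and the $\Sigma'$-splitting defining $A^{(\post\omega)}$) from being conflated. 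A secondary point worth noting is that, unlike Theorem~\ref{thm:SpecialIdentities}, one only obtains an inclusion and not an equality: the reverse inclusion would require that every post-special algebra in $\post S^{(\omega)}\mathfrak N$ arises from a post-algebra over $\mathfrak M$, and the authors announce (and presumably demonstrate by a counterexample elsewhere in this section) that this fails; so the proof deliberately stops at the inclusion.
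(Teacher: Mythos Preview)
Your argument is correct and is essentially the same idea as the paper's, but routed through the equivalent characterization $T\in\post\mathfrak M \iff \widehat T\in\mathfrak M$ (Theorem~\ref{lem:Splitting_vs_RB}) rather than through Definition~\ref{defn:postAlg_via_ComTrias} directly. The paper simply takes $C=\ComTrias\langle y_1,y_2,\dots\rangle$, notes that $C\boxtimes T\subseteq C\boxtimes A^{(\post\omega)}=(C\boxtimes A)^{(\omega)}$, and concludes $C\boxtimes T\in S^{(\omega)}\mathfrak N$, hence $T\in\post S^{(\omega)}\mathfrak N$ by definition. Your version is the specialization of this to $C=C_2$, followed by an appeal to Theorem~\ref{lem:Splitting_vs_RB}.

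The ``main obstacle'' you anticipate --- verifying $\widehat{A^{(\post\omega)}}=(\widehat A)^{(\omega)}$ --- is in fact not an obstacle at all: the operations $f^H$ on $A^{(\post\omega)}$ are \emph{defined} precisely so that $C\boxtimes A^{(\post\omega)}=(C\boxtimes A)^{(\omega)}$ holds for the free $C$ (this is exactly the content of the proof of Proposition~5.1), and specializing to any $C$, in particular $C_2$, is immediate. No induction on monomial length or invocation of Lemma~\ref{lem:EquivInductionTerms} is needed. So your plan works, but the paper's route is shorter because working with the free $\ComTrias$-algebra keeps the compatibility identity tautological and lands you directly in Definition~\ref{defn:postAlg_via_ComTrias}.
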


\begin{proof}
Let us consider the case of post-algebras.
 It is enough to show that every special algebra in $\post\mathfrak N$
 belongs to $\post S^{(\omega)}\mathfrak N$.
 Suppose $T\in S^{(\post\omega) }\post\mathfrak N$,
 $T\subseteq A^{(\post\omega)}$. Fix $C=\ComTrias\langle y_1,y_2,\dots \rangle$
 and note that
 $C\boxtimes T\subseteq C\boxtimes A^{(\post\omega)}=(C\boxtimes A)^{(\omega )}$,
 i.e., $C\boxtimes T \in S^{(\omega )}\mathfrak N$.
 By definition, $T\in \post S^{(\omega )}\mathfrak N$.
\end{proof}

Let us state an example to show that the converse embedding may not hold.
Although the language in the example below contains unary operation,
it is a derivation with respect to the binary product. Hence, Theorem \ref {thm:SpecialIdentities}
for di- or tri-algebras would remain valid in these settings (see Remark \ref{rem:UnitaryOps}).
Thus, the example stated below shows an essential difference between 
di-, tri-algebras  and pre-, post-algebras.

\begin{example}\label{exmp:pre-Counter}
 Let $\mathfrak N = \Perm$, and let $\mathfrak M$ governs the variety
 of associative commutative algebras with a derivation (unary operation) $\partial $
 such that $\partial^2=0$. Consider
 $\omega : \mathfrak N\to \mathfrak M$ given by $x_1x_2 \mapsto \partial(x_1)x_2$.
\end{example}

Here $\Sigma '=\{\cdot \}$, one binary operation; $\Sigma =\{\cdot, \partial \}$.

It is well-known that $\omega $ determines a functor from the variety $\mathfrak M$
to $\mathfrak N=\Perm $ \cite{Loday01}.
Moreover, every algebra of the form $A^{(\omega )}$, $A\in \mathfrak M$,
is 3-nilpotent. Since there are no identities of smaller degree, the variety
$S^{(\omega )}\mathfrak N$ coincides with $N_3$, the variety of algebras satisfying
$x(yz)=(xy)z=0$.

It is straightforward to find the defining identities of $\pre N_3$:
\begin{equation}\label{eq:preS-N3}
\begin{gathered}
(x\prec y)\prec z = 0, \quad (x\succ y)\prec z = 0,\quad
(x\prec y + x\succ y)\succ z = 0, \\
x\prec (y\prec z + y\succ z)=0 ,\quad
x\succ (y\prec z)=0 ,\quad
x\succ (y\succ z)=0 .
 \end{gathered}
\end{equation}
Here $\Sigma^{\prime(3)}=\{\succ, \prec \}$, two binary operations.

On the other hand, $\pre\mathfrak M$ consists of $\Perm $-algebras equipped with
a derivation $\partial $ such that $\partial^2=0$. If $A\in \pre\mathfrak M$ then
the operations on $A^{(\pre\omega )}\in \pre\mathfrak N$ are given by
\[
 a\succ b = \partial (a)b, \quad a\prec b = b\partial (a).
\]
Note that
$a\succ b + b\prec a = \partial(a)b+a\partial(b)=\partial(ab) $,
and $\partial(ab)\succ c = 0$ for all $a,b,c\in A$.
Hence, every algebra in $S^{(\pre\omega )}\pre\mathfrak N$
satisfies an identity
\[
(x\succ y + y\prec x) \succ z =0
\]
which does not follow from \eqref{eq:preS-N3}.

\noindent
\footnotesize{Sobolev Institute of Mathematics\\
         Akad. Koptyug prosp., 4 \\
          630090 Novosibirsk\\
         Russia\\
e-mail: pavelsk@math.nsc.ru, vsevolodgu@mail.ru}

\end{document}